\numberwithin{equation}{section}
\begin{document}
\title{Weighted inequalities for fractional integral operators and linear commutators in the Morrey type spaces}
\author{Hua Wang \footnote{E-mail address: wanghua@pku.edu.cn.}\\
\footnotesize{College of Mathematics and Econometrics, Hunan University, Changsha 410082, P. R. China}}
\date{}
\maketitle

\begin{abstract}
In this paper, we first introduce some new Morrey type spaces containing generalized Morrey space and weighted Morrey space with two weights as special cases. Then we give the weighted strong type and weak type estimates for fractional integral operators $I_\alpha$ in these new Morrey type spaces. Furthermore, the weighted strong type estimate and endpoint estimate of linear commutators $[b,I_{\alpha}]$ formed by $b$ and $I_{\alpha}$ are established. Also we study related problems about two-weight, weak type inequalities for $I_{\alpha}$ and $[b,I_{\alpha}]$ in the Morrey type spaces and give partial results.\\
MSC(2010): 42B20; 42B25; 42B35\\
Keywords: Fractional integral operators; commutators; Morrey type spaces; $BMO(\mathbb R^n)$; weights; Orlicz spaces.
\end{abstract}

\section{Introduction}

For given $\alpha$, $0<\alpha<n$, the fractional integral operator (or the Riesz potential) $I_{\alpha}$ of order $\alpha$ is defined by
\begin{equation*}
I_{\alpha}f(x):=\frac{1}{\gamma(\alpha)}\int_{\mathbb R^n}\frac{f(y)}{|x-y|^{n-\alpha}}\,dy,
\quad\mbox{and}\quad \gamma(\alpha)=\frac{\pi^{\frac{n}{\,2\,}}2^\alpha\Gamma(\frac{\alpha}{\,2\,})}{\Gamma(\frac{n-\alpha}{2})}.
\end{equation*}
It is well-known that the Hardy--Littlewood--Sobolev theorem states that the fractional integral operator $I_{\alpha}$ is bounded from $L^p(\mathbb R^n)$ to $L^q(\mathbb R^n)$ for $0<\alpha<n$, $1<p<n/{\alpha}$ and $1/q=1/p-{\alpha}/n$. Also we know that $I_{\alpha}$ is bounded from $L^1(\mathbb R^n)$ to $WL^q(\mathbb R^n)$ for $0<\alpha<n$ and $q=n/{(n-\alpha)}$ (see \cite{stein}). In 1974, Muckenhoupt and Wheeden \cite{muckenhoupt} studied the weighted boundedness of $I_{\alpha}$ and obtained the following results.

\newtheorem{theorem}{Theorem}[section]
\newtheorem{defn}{Definition}[section]
\newtheorem{corollary}{Corollary}[section]
\newtheorem{lemma}{Lemma}[section]

\begin{theorem}[\cite{muckenhoupt}]\label{strong}
Let $0<\alpha<n$, $1<p<n/{\alpha}$, $1/q=1/p-{\alpha}/n$ and $w\in A_{p,q}$. Then the fractional integral operator $I_{\alpha}$ is bounded from $L^p(w^p)$ to $L^q(w^q)$.
\end{theorem}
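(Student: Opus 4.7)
The classical strategy, going back to Muckenhoupt and Wheeden, is to pass through the fractional maximal function
\[
M_{\alpha}f(x):=\sup_{B\ni x}\frac{1}{|B|^{1-\alpha/n}}\int_{B}|f(y)|\,dy,
\]
using two ingredients: (i) the $L^p(w^p)\to L^q(w^q)$ boundedness of $M_\alpha$ under the $A_{p,q}$ condition, and (ii) a Fefferman--Stein type good-$\lambda$ inequality comparing the distribution functions of $I_\alpha f$ and $M_\alpha f$ with respect to $w^q\,dx$.

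The first ingredient is a direct two-weight estimate. After reducing to $f\ge 0$, bounded and compactly supported, one combines the defining $A_{p,q}$ bound
\[
\Bigl(\frac{1}{|B|}\int_{B}w^{q}\Bigr)^{1/q}\Bigl(\frac{1}{|B|}\int_{B}w^{-p'}\Bigr)^{1/p'}\le C
\]
with H\"older's inequality applied to $\int_B|f|$ to show that $M_\alpha$ is of weak type $(p,q)$ from $L^p(w^p)$ to $L^q(w^q)$; Marcinkiewicz interpolation between this endpoint and the trivial $L^\infty$ bound yields the strong-type estimate.

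The second, and main, ingredient is the good-$\lambda$ inequality: since $w\in A_{p,q}$ implies $w^q\in A_\infty$, there exist constants $\gamma_0,\delta,C>0$ such that for all $\lambda>0$ and all $0<\gamma<\gamma_0$,
\[
w^q\bigl(\{x:I_\alpha f(x)>2\lambda,\ M_\alpha f(x)\le\gamma\lambda\}\bigr)\le C\gamma^{\delta}\,w^q\bigl(\{x:I_\alpha f(x)>\lambda\}\bigr).
\]
I would prove this by a Whitney decomposition of the open set $\{I_\alpha f>\lambda\}$ into cubes $\{Q_j\}$ and, on each $Q_j$, splitting $f=f\chi_{2Q_j}+f\chi_{(2Q_j)^c}$. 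The local piece $I_\alpha(f\chi_{2Q_j})$ is dominated pointwise by $CM_\alpha f$, hence by $C\gamma\lambda$ on the good set; the smoothness of the Riesz kernel makes the tail $I_\alpha(f\chi_{(2Q_j)^c})$ essentially constant across $Q_j$, so the Whitney property (a neighbour of $Q_j$ meets the complement of $\{I_\alpha f>\lambda\}$) forces it to be bounded by $\lambda$ throughout $Q_j$. The $A_\infty$ property of $w^q$ then upgrades Lebesgue measure control on $Q_j$ into $w^q$-measure control with the gain $\gamma^{\delta}$.

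Combining the two ingredients, I would multiply the good-$\lambda$ inequality by $q(2\lambda)^{q-1}$, integrate in $\lambda\in(0,\infty)$, invoke the layer-cake representation, truncate if necessary to guarantee that $\|I_\alpha f\|_{L^q(w^q)}$ is a priori finite, and absorb the resulting term to obtain $\|I_\alpha f\|_{L^q(w^q)}\le C\|M_\alpha f\|_{L^q(w^q)}\le C\|f\|_{L^p(w^p)}$. The hard part is the good-$\lambda$ inequality itself: one must harmonize the Whitney geometry, the local/tail decomposition of the Riesz kernel, and the $A_\infty$ behaviour of $w^q$ into a single estimate with an explicit polynomial gain $\gamma^{\delta}$ that is uniform in $\lambda$.
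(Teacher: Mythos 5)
Theorem \ref{strong} is not proved in the paper; it is quoted from Muckenhoupt and Wheeden \cite{muckenhoupt} and used as a black box in the proofs of Theorems \ref{mainthm:1}, \ref{mainthm:3} and \ref{mainthm:end1}, so there is no in-paper argument to compare against. Your good-$\lambda$ architecture (two-weight bound for the fractional maximal function $M_\alpha$, then a Fefferman--Stein comparison between $I_\alpha$ and $M_\alpha$ under $w^q\in A_\infty$) is a correct and standard route to this theorem, but two steps in the sketch are wrong as written.

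First, Marcinkiewicz interpolation between the weak-type $(p,q)$ bound for $M_\alpha$ and the trivial $L^\infty\to L^\infty$ bound yields strong-type $(r,s)$ estimates only for $r$ strictly between $p$ and $\infty$; it does not produce the strong $(p,q)$ bound you need, because $(p,q)$ sits at the endpoint of the interpolation segment. Closing this requires the self-improving (openness) property of the $A_{p,q}$ class, obtained from the reverse H\"older inequality: $w\in A_{p,q}$ implies $w\in A_{p_1,q_1}$ for some $p_1<p$ with $1/q_1=1/p_1-\alpha/n$, and then interpolation lands at $(p,q)$ in the interior. (Equivalently one proves the strong type for $M_\alpha$ directly, as Muckenhoupt and Wheeden do.) Second, the claim that the local piece $I_\alpha(f\chi_{2Q_j})$ is ``dominated pointwise by $CM_\alpha f$'' is false: for $f\ge 0$ the pointwise inequality runs the other way, $M_\alpha f\lesssim I_\alpha f$, and there is no reverse pointwise bound. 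In the good-$\lambda$ argument the local piece is controlled in measure, not pointwise: one uses the unweighted weak-$(1,n/(n-\alpha))$ inequality for $I_\alpha$ together with $\|f\chi_{2Q_j}\|_{L^1}\lesssim|Q_j|^{1-\alpha/n}\gamma\lambda$ (which follows from $M_\alpha f\le\gamma\lambda$ at some point of $Q_j$) to get $|\{x\in Q_j: I_\alpha(f\chi_{2Q_j})(x)>c\lambda\}|\lesssim\gamma^{n/(n-\alpha)}|Q_j|$, and only then does the $A_\infty$ property of $w^q$ convert this Lebesgue-measure smallness into a $w^q$-measure gain. Your handling of the tail term via the Whitney geometry and the final integration in $\lambda$ are fine once these two points are repaired.
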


\begin{theorem}[\cite{muckenhoupt}]\label{weak}
Let $0<\alpha<n$, $p=1$, $q=n/{(n-\alpha)}$ and $w\in A_{1,q}$. Then the fractional integral operator $I_{\alpha}$ is bounded from $L^1(w)$ to $WL^q(w^q)$.
\end{theorem}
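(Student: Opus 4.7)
I would deduce the weighted weak-type bound for $I_\alpha$ from the (much easier) weighted weak-type bound for the fractional maximal function $M_\alpha f(x) := \sup_{B \ni x} |B|^{\alpha/n - 1}\int_B |f|$, via a good-$\lambda$ inequality. The structural observation used throughout is that $w \in A_{1,q}$ is equivalent to $w^q \in A_1$: the defining inequality $\bigl(\tfrac{1}{|Q|}\int_Q w^q\bigr)^{1/q}\operatorname{ess\,sup}_Q w^{-1} \le C$ rearranges to $\tfrac{1}{|Q|}\int_Q w^q \le C^q \operatorname{ess\,inf}_Q w^q$, so $w^q$ is doubling and satisfies a reverse H\"older inequality.

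First, I would establish $M_\alpha : L^1(w) \to WL^q(w^q)$ directly. Given $\lambda > 0$, a Vitali covering of $\{M_\alpha f > \lambda\}$ produces disjoint balls $\{B_j\}$ with $|B_j|^{\alpha/n - 1}\int_{B_j}|f| > \lambda$ and $\{M_\alpha f > \lambda\} \subset \bigcup_j 5B_j$. Using the identity $q(1-\alpha/n) = 1$, H\"older, and the $A_{1,q}$ estimate $\operatorname{ess\,sup}_{B_j} w^{-1} \le C(|B_j|/w^q(B_j))^{1/q}$, one obtains $\lambda\,w^q(B_j)^{1/q} \le C\int_{B_j}|f|w$ for each $j$. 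Raising to the $q$-th power, summing, and applying $\sum a_j^q \le (\sum a_j)^q$ (valid for $a_j \ge 0$ and $q \ge 1$) together with the disjointness of $\{B_j\}$ and the doubling of $w^q$ yields the desired bound $w^q(\{M_\alpha f > \lambda\}) \le C\lambda^{-q}\|f\|_{L^1(w)}^q$.

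Next, I would establish the good-$\lambda$ inequality
\[
w^q\bigl(\{I_\alpha f > 2\lambda,\, M_\alpha f \le \gamma\lambda\}\bigr) \le C\gamma^{\delta}\,w^q\bigl(\{I_\alpha f > \lambda\}\bigr)
\]
for some $\delta > 0$ and all sufficiently small $\gamma > 0$. Whitney-decompose $\{I_\alpha f > \lambda\}$ into cubes $\{Q\}$; on each $Q$, split $f = f\chi_{3Q} + f\chi_{(3Q)^c}$, bound the tail $I_\alpha(f\chi_{(3Q)^c})$ pointwise on $Q$ by a constant multiple of $\inf_Q M_\alpha f$ (via a dyadic decomposition of the exterior of $3Q$), and apply the unweighted weak-type $L^1 \to WL^q$ estimate for $I_\alpha$ (classical, independent of the present theorem) to the near part; this yields a Lebesgue-measure bound of the form $|\{I_\alpha(f\chi_{3Q}) > \lambda\} \cap Q| \le C\gamma^q |Q|$. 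Transferring from Lebesgue to $w^q$ via the reverse H\"older property of $w^q \in A_\infty$ produces the gain $\gamma^{\delta}$. The standard distributional argument then upgrades this to $\|I_\alpha f\|_{WL^q(w^q)} \le C\|M_\alpha f\|_{WL^q(w^q)}$, and combining with the first step concludes the proof.

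\textbf{Main obstacle.} The most delicate point is the Lebesgue-to-weighted transfer, which requires a quantitative reverse H\"older inequality for $w^q \in A_\infty$ to ensure the exponent $\delta$ is strictly positive. Once the good-$\lambda$ inequality is in hand, the rest is routine; the full $A_{1,q}$ assumption is needed only for the weak-type bound on $M_\alpha$, whereas the step involving $I_\alpha$ uses only the weaker consequence $w^q \in A_\infty$.
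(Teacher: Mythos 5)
The paper does not give a proof of this statement: Theorem~\ref{weak} is quoted verbatim from Muckenhoupt and Wheeden \cite{muckenhoupt} and is used as a black box in the proof of Theorem~\ref{mainthm:2}. So there is no internal proof to compare against; I can only assess your outline on its own terms. Your overall plan (weak-type bound for $M_\alpha$ by Vitali covering, then a good-$\lambda$ inequality to pass to $I_\alpha$, with the Lebesgue-to-$w^q$ transfer via $A_\infty$) is in the spirit of the classical argument, and your step for $M_\alpha$ is correct: the computation $\lambda\,w^q(B_j)^{1/q}\leq C\int_{B_j}|f|\,w$ from the $A_{1,q}$ bound $\operatorname{ess\,sup}_{B_j}w^{-1}\leq C(|B_j|/w^q(B_j))^{1/q}$, followed by $\sum a_j^q\leq(\sum a_j)^q$ and disjointness, is exactly right.

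There is, however, a genuine error in the good-$\lambda$ step: you claim to ``bound the tail $I_\alpha(f\chi_{(3Q)^c})$ pointwise on $Q$ by a constant multiple of $\inf_Q M_\alpha f$.'' That inequality is false. Decomposing $(3Q)^c$ into dyadic annuli gives, for $x\in Q$,
\begin{equation*}
I_\alpha(f\chi_{(3Q)^c})(x)\lesssim\sum_{k\geq 2}|2^k Q|^{\alpha/n-1}\int_{2^kQ}|f|,
\end{equation*}
and each summand is $\leq M_\alpha f(x)$, but there are infinitely many terms without a geometric gain, so the sum need not be comparable to $M_\alpha f(x)$; one can easily build $f$ with $M_\alpha f(x)\approx 1$ on $Q$ but $I_\alpha(f\chi_{(3Q)^c})(x)$ arbitrarily large. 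What \emph{is} true, and what the good-$\lambda$ argument actually needs, is that the tail is close to the value of $I_\alpha f$ at a reference point $\bar x$ lying in a fixed dilate of $Q$ but outside $\{I_\alpha f>\lambda\}$ (such a point exists by the Whitney property): by the mean value theorem and the dyadic decomposition one gets
\begin{equation*}
\bigl|I_\alpha(f\chi_{(3Q)^c})(x)-I_\alpha(f\chi_{(3Q)^c})(\bar x)\bigr|
\leq C\sum_{k\geq 2}2^{-k}\,|2^kQ|^{\alpha/n-1}\int_{2^kQ}|f|\leq C\,\inf_Q M_\alpha f,
\end{equation*}
the extra factor $2^{-k}$ producing a convergent series, while $I_\alpha(f\chi_{(3Q)^c})(\bar x)\leq I_\alpha f(\bar x)\leq\lambda$ (taking $f\geq 0$ WLOG). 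So the tail is controlled by $\lambda+C\gamma\lambda$, not by $C\gamma\lambda$ alone; it is only the \emph{oscillation} of the tail across $Q$ that is dominated by $M_\alpha f$. With that correction the rest of your sketch (unweighted weak-$(1,q)$ for the near part, then the $A_\infty$ comparison \eqref{compare} to move from Lebesgue measure to $w^q$) goes through. You should also note that the good-$\lambda$ step needs an a priori qualitative finiteness of $\|I_\alpha f\|_{WL^q(w^q)}$ (typically arranged by truncating the kernel or the function and passing to the limit) before the absorption argument is legitimate, and that $\{I_\alpha f>\lambda\}$ must be replaced by an open superset to perform the Whitney decomposition.
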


For $0<\alpha<n$, the linear commutator $[b,I_{\alpha}]$ generated by a suitable function $b$ and $I_{\alpha}$ is defined by
\begin{equation*}
\begin{split}
[b,I_\alpha]f(x)&:=b(x)\cdot I_\alpha f(x)-I_\alpha(bf)(x)\\
&=\frac{1}{\gamma(\alpha)}\int_{\mathbb R^n}\frac{[b(x)-b(y)]\cdot f(y)}{|x-y|^{n-\alpha}}\,dy.
\end{split}
\end{equation*}

In 1991, Segovia and Torrea \cite{segovia} proved that $[b,I_{\alpha}]$ is also bounded from $L^p(w^p)$ ($1<p<n/{\alpha}$) to $L^q(w^q)$ whenever $b\in BMO(\mathbb R^n)$ (see also \cite{chanillo} for the unweighted case).

\begin{theorem}[\cite{segovia}]\label{cstrong}
Let $0<\alpha<n$, $1<p<n/{\alpha}$, $1/q=1/p-{\alpha}/n$ and $w\in A_{p,q}$. Suppose that $b\in BMO(\mathbb R^n)$, then the linear commutator $[b,I_{\alpha}]$ is bounded from $L^p(w^p)$ to $L^q(w^q)$.
\end{theorem}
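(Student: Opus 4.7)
The plan is to reduce the weighted boundedness of $[b,I_\alpha]$ to that of $I_\alpha$ itself (Theorem \ref{strong}) through a Fefferman--Stein sharp maximal function argument. First I would exploit the basic identity: for every cube $Q \ni x$ and every constant $c$,
\[
[b,I_\alpha]f(x) = \bigl(b(x)-c\bigr)\,I_\alpha f(x) - I_\alpha\bigl((b-c)f\bigr)(x),
\]
choose $c = b_{2Q}$ (the mean of $b$ over $2Q$), and split $f = f\chi_{2Q} + f\chi_{(2Q)^c} =: f_1 + f_2$. The three resulting pieces will be estimated separately on the oscillation of $[b,I_\alpha]f$ over $Q$.

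The main technical step is to establish a pointwise Coifman--P\'erez type sharp maximal inequality of the form
\[
M^{\#}\bigl([b,I_\alpha]f\bigr)(x) \le C\|b\|_{BMO}\Bigl[M_r\bigl(I_\alpha f\bigr)(x) + M_{\alpha,s}f(x)\Bigr]
\]
for some $r,s>1$ close to $1$, where $M_r g = \bigl(M(|g|^r)\bigr)^{1/r}$ and $M_{\alpha,s}$ denotes the $L^s$-fractional maximal operator of order $\alpha$. The first piece $(b-b_{2Q})I_\alpha f$ is handled by H\"older's inequality together with the John--Nirenberg estimate, which gives control of $b-b_{2Q}$ in the Orlicz norm $\exp L$ over $2Q$. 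The local piece $I_\alpha\bigl((b-b_{2Q})f_1\bigr)$ is bounded via a weak-type reduction for $I_\alpha$ combined with a generalized H\"older inequality. The delicate tail $I_\alpha\bigl((b-b_{2Q})f_2\bigr)$ is decomposed over dyadic annuli $2^{k+1}Q\setminus 2^kQ$; on each annulus one uses the smoothness of the Riesz kernel and the telescoping bound
\[
|b(y)-b_{2Q}| \le |b(y)-b_{2^{k+1}Q}| + \sum_{j=0}^{k}|b_{2^{j+1}Q}-b_{2^jQ}|,
\]
each difference of averages being $O(\|b\|_{BMO})$, to produce a geometrically convergent series in $k$.

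Once the sharp maximal estimate is in hand, I would apply the weighted Fefferman--Stein inequality $\|g\|_{L^q(w^q)} \le C\|M^{\#}g\|_{L^q(w^q)}$, available because $w\in A_{p,q}$ implies $w^q\in A_\infty$. Combining this with Theorem \ref{strong} to bound $\|M_r(I_\alpha f)\|_{L^q(w^q)}$ and with the Muckenhoupt--Wheeden two-weight bound for $M_{\alpha,s}$ from $L^p(w^p)$ to $L^q(w^q)$ yields the desired estimate $\|[b,I_\alpha]f\|_{L^q(w^q)} \lesssim \|b\|_{BMO}\|f\|_{L^p(w^p)}$.

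The principal obstacle is the pointwise sharp maximal inequality itself, especially the tail piece, together with the requirement that $r,s>1$ can be chosen close enough to $1$ that $w^q$ still lies in the appropriate Muckenhoupt class to feed into Theorem \ref{strong} and the fractional maximal bound. This last point is a consequence of the openness (self-improvement) of the $A_{p,q}$ condition, which ensures that a small perturbation of the exponents preserves the weight hypothesis.
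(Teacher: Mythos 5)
The paper does not prove this statement; it is Theorem~1.3, cited verbatim from Segovia and Torrea \cite{segovia} as a known weighted boundedness result for $[b,I_\alpha]$, and it is subsequently used as a black box in the proof of Theorem~\ref{mainthm:3}. So there is no internal proof to compare against.

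That said, your proposal is a sound and standard route to this result, but it is genuinely different from the approach in the cited reference. Segovia and Torrea's original argument proceeds through a vector-valued Calder\'on--Zygmund framework: they realize commutators as Banach-space-valued singular (or fractional) integrals and invoke weighted vector-valued theory and extrapolation to transfer boundedness from the unweighted to the $A_{p,q}$-weighted setting. Your sketch instead follows the Coifman--P\'erez sharp maximal method, later refined for fractional commutators by Cruz-Uribe, Fiorenza, and P\'erez: a pointwise inequality of the form
\[
M^{\#}_\delta\bigl([b,I_\alpha]f\bigr)(x) \le C\|b\|_{*}\Bigl[M_\epsilon(I_\alpha f)(x) + M_{\alpha,L\log L}f(x)\Bigr],
\]
followed by the weighted Fefferman--Stein inequality with $w^q\in A_\infty$ and the boundedness of the auxiliary maximal operators. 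In the literature the Orlicz bump $M_{\alpha,L\log L}$ is the natural object controlling the local term; replacing it by $M_{\alpha,s}$ with $s>1$ near $1$, as you do, is admissible because $\|\cdot\|_{L\log L,Q}\lesssim\|\cdot\|_{L^s,Q}$, and the needed weighted bound $M_{\alpha,s}:L^p(w^p)\to L^q(w^q)$ then follows from Muckenhoupt--Wheeden for $M_{\alpha s}$ together with the self-improvement (openness) of the $A_{p,q}$ class, exactly as you indicate. One small technical caveat worth flagging: the step $\|g\|_{L^q(w^q)}\lesssim\|M^{\#}_\delta g\|_{L^q(w^q)}$ for $w^q\in A_\infty$ requires a qualitative a priori hypothesis (e.g.\ $g\in L^{q_0}$ or a truncation/limiting argument) to rule out the trivial case $\|g\|_{L^q(w^q)}=\infty$; this is routine but should be stated. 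What your route buys over the vector-valued approach is a transparent pointwise mechanism and direct access to sharper endpoint and two-weight variants; what the vector-valued/extrapolation route buys is uniformity across a family of operators and exponents with less case-by-case kernel analysis.
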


In 2007, Cruz-Uribe and Fiorenza \cite{cruz5} discussed the weighted endpoint inequalities for commutator of fractional integral operator and proved the following result (see also \cite{cruz4} for the unweighted case).

\begin{theorem}[\cite{cruz5}]\label{cweak}
Let $0<\alpha<n$, $p=1$, $q=n/{(n-\alpha)}$ and $w^q\in A_1$. Suppose that $b\in BMO(\mathbb R^n)$, then for any given $\sigma>0$ and any bounded domain $\Omega\subset\mathbb R^n$, there is a constant $C>0$, which does not depend on $f$, $\Omega$ and $\sigma>0$, such that
\begin{equation*}
\Big[w^q\big(\big\{x\in\Omega:\big|[b,I_\alpha](f)(x)\big|>\sigma\big\}\big)\Big]^{1/q}
\leq C\int_{\Omega}\Phi\left(\frac{|f(x)|}{\sigma}\right)\cdot w(x)\,dx,
\end{equation*}
where $\Phi(t)=t\cdot(1+\log^+t)$ and $\log^+t=\max\{\log t,0\}$.
\end{theorem}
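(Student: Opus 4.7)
By positive homogeneity of both sides we may take $\sigma = 1$ and assume $f \ge 0$ is supported in $\Omega$. The plan is to follow the Pérez scheme for endpoint inequalities of commutators: perform a Calderón--Zygmund decomposition of $f$, handle the good part via the strong-type estimate of Theorem \ref{cstrong}, and handle the bad part using the vanishing mean of the bad pieces together with the $BMO$ regularity of $b$ and the generalized Hölder inequality for Orlicz spaces.

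Since $w^q \in A_1$ implies $w \in A_1$ (the class $A_1$ being closed under $t \mapsto t^{1/q}$), the weight $w$ is doubling and one can apply a Calderón--Zygmund decomposition of $f$ with respect to $w\,dx$ at height $1$. This yields a disjoint family of cubes $\{Q_j\}$ with $1 < \frac{1}{w(Q_j)}\int_{Q_j} f\,w \le C$ and a splitting $f = g + h$, $h = \sum_j h_j$, where $h_j$ is supported in $Q_j$ with vanishing $w$-integral and $\|g\|_\infty \lesssim 1$. Let $E = \bigcup_j 2Q_j$. The $A_1$ property of $w^q$ gives $w^q(E) \lesssim \sum_j w^q(Q_j)$, which combined with the stopping estimate $w(Q_j) \lesssim \int_{Q_j} f\,w$ yields $w^q(E)^{1/q} \lesssim \int_\Omega f\,w \le \int_\Omega \Phi(f)\,w$.

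For the good part, pick $1 < p < n/\alpha$ close enough to $1$ with $1/q_0 = 1/p - \alpha/n$ so that $w \in A_{p,q_0}$ (possible by the $A_\infty$-self-improvement inherited from $w^q \in A_1$); then Theorem \ref{cstrong} gives $\|[b,I_\alpha]g\|_{L^{q_0}(w^{q_0})} \lesssim \|g\|_{L^p(w^p)}$, which in turn is bounded in terms of $\int_\Omega f\,w$ using $\|g\|_\infty \lesssim 1$. A Chebyshev step and an $A_\infty$-comparison on the bounded set $\Omega$ convert the $w^{q_0}$-estimate to the required $w^q$-estimate. For the bad part, on $\Omega \setminus E$ decompose
\[
[b,I_\alpha]h_j(x) = \bigl(b(x) - b_{Q_j}\bigr)\,I_\alpha h_j(x) - I_\alpha\!\bigl((b - b_{Q_j})h_j\bigr)(x).
\]
The first summand is controlled outside $2Q_j$ by the Hölder smoothness of the kernel $|x-y|^{\alpha - n}$, the vanishing mean of $h_j$, and the $BMO$ growth $|b(x) - b_{Q_j}| \lesssim \bigl(1 + \log(|x - c_{Q_j}|/\ell(Q_j))\bigr)\|b\|_*$. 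The second summand is estimated on each dyadic annulus $2^{k+1}Q_j \setminus 2^k Q_j$ by the generalized Hölder inequality in the Orlicz pair $(\Phi,\Psi)$ with $\Psi(t) = e^t - 1$ applied to $|b - b_{Q_j}|$ and $|h_j|$; the John--Nirenberg bound $\|b - b_{Q_j}\|_{\exp L,\,2^k Q_j} \lesssim (1 + k)\|b\|_*$ then introduces a $\Phi$-average of $f$ on $Q_j$. Summing the resulting geometric series in $k$ and over $j$, and applying Chebyshev against $w^q$ together with $w^q \in A_1$, completes the estimate by a multiple of $\int_\Omega \Phi(f)\,w$ raised to the appropriate power.

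The main technical obstacle is the second summand above. Coordinating the $|x-y|^{\alpha - n}$ decay of the fractional kernel, the $\exp L$-$BMO$ bound from John--Nirenberg, and the $A_1$-arithmetic of $w^q$ so that the Young function $\Phi(t) = t(1 + \log^+ t)$ appears on the right-hand side precisely as stated—rather than some strictly larger iterated Orlicz function—is the most delicate piece of accounting. Keeping the constants uniform in $\Omega$ and $\sigma$ is what forces the use of local Orlicz averages rather than global Orlicz maximal functions in the argument.
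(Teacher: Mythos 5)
The paper does not prove Theorem \ref{cweak}; it is quoted from \cite{cruz5} and invoked as a black box in the proof of Theorem \ref{mainthm:4}. There is therefore no in-paper proof to compare against, so I will assess your sketch on its own terms.

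Your treatment of the bad part is consistent with the P\'erez-style commutator endpoint arguments, and the preliminary reductions (normalization $\sigma=1$, weighted Calder\'on--Zygmund decomposition, $w^q\in A_1\Rightarrow w\in A_1$, control of the exceptional set $E$) are sound. The genuine gap is in the good part. You pick $1<p<n/\alpha$, set $1/q_0=1/p-\alpha/n$, apply Theorem \ref{cstrong} to obtain an $L^{q_0}(w^{q_0})$ estimate, and then propose to ``convert the $w^{q_0}$-estimate to the required $w^q$-estimate'' by ``an $A_\infty$-comparison on the bounded set $\Omega$.'' No such comparison exists with a constant independent of $\Omega$: since $q_0<q$, the measures $w^{q_0}\,dx$ and $w^q\,dx$ are genuinely different, and any inequality of the form $w^q(E)^{1/q}\le C\,w^{q_0}(E)^{1/q_0}$ valid for all $E\subset\Omega$ forces $C$ to depend on $\Omega$, which destroys exactly the uniformity in $\Omega$ that the theorem demands. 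There is a further mismatch earlier in the same step: $\|g\|_\infty\le C$ only yields $\int g^p w^p\le C^{p-1}\int g\,w^p$, and because your stopping time is taken with respect to $w\,dx$, the integral $\int g\,w^p$ over the stopping cubes produces ratios of the form $\bigl(\int_{Q_j}w^p\,dx\bigr)/w(Q_j)$, which are not uniformly bounded, so $\|g\|_{L^p(w^p)}$ is not controlled by $\int_\Omega f\,w$ as claimed.

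The proof in \cite{cruz5} avoids these difficulties by never leaving the fixed exponent $q=n/(n-\alpha)$: it relies on a pointwise Fefferman--Stein sharp-function bound for $[b,I_\alpha]$ by a fractional Orlicz maximal operator of $L\log L$ type, combined with a weighted good-$\lambda$ inequality and an endpoint modular weak-type estimate for that maximal operator against the $A_1$ weight $w^q$. If you wish to retain a Calder\'on--Zygmund route, the good part must be estimated directly in the $(1,q)$ scale for the bounded function $g$, exploiting $w^q\in A_1$ (equivalently $w\in A_1\cap RH_q$) to compare the measures $w\,dx$ and $w^q\,dx$, rather than by passing through a strong $(p,q_0)$ bound at a different exponent $q_0$ and hoping to change weights afterwards.
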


On the other hand, the classical Morrey space was originally introduced by Morrey in \cite{morrey} to study the local behavior of solutions to second order elliptic partial differential equations. This classical space and various generalizations on the Euclidean space $\mathbb R^n$ have been extensively studied by many authors. In \cite{mizuhara}, Mizuhara introduced the generalized Morrey space $\mathcal L^{p,\Theta}(\mathbb R^n)$ which was later extended and studied in \cite{nakai}. In \cite{komori}, Komori and Shirai defined a version of the weighted Morrey space $\mathcal L^{p,\kappa}(v,u)$ which is a natural generalization of the weighted Lebesgue space.

Let $I_\alpha$ be the fractional integral operator, and let $[b,I_{\alpha}]$ be its linear commutator. The main purpose of this paper is twofold. We first define a new kind of Morrey type spaces $\mathcal M^{p,\theta}(v,u)$ containing generalized Morrey space $\mathcal L^{p,\Theta}(\mathbb R^n)$ and weighted Morrey space $\mathcal L^{p,\kappa}(v,u)$ as special cases. As the Morrey type spaces may be considered as an extension of the weighted Lebesgue space, it is natural and important to study the weighted boundedness of $I_{\alpha}$ and $[b,I_{\alpha}]$ in these new spaces. And then we will establish the weighted strong type and endpoint estimates for $I_{\alpha}$ and $[b,I_{\alpha}]$ in these Morrey type spaces $\mathcal M^{p,\theta}(v,u)$ for all $1\leq p<\infty$. In addition, we will discuss two-weight, weak type norm inequalities for $I_\alpha$ and $[b,I_{\alpha}]$ in $\mathcal M^{p,\theta}(v,u)$ and give some partial results.

\section{Statements of the main results}

\subsection{Notations and preliminaries}
Let $\mathbb R^n$ be the $n$-dimensional Euclidean space of points $x=(x_1,x_2,\dots,x_n)$ with norm $|x|=(\sum_{i=1}^n x_i^2)^{1/2}$. For $x_0\in\mathbb R^n$ and $r>0$, let $B(x_0,r)=\{x\in\mathbb R^n:|x-x_0|<r\}$ denote the open ball centered at $x_0$ of radius $r$, $B(x_0,r)^c$ denote its complement and $|B(x_0,r)|$ be the Lebesgue measure of the ball $B(x_0,r)$. A non-negative function $w$ defined on $\mathbb R^n$ is called a weight if it is locally integrable. We first recall the definitions of two weight classes; $A_p$ and $A_{p,q}$.
\begin{defn}[$A_p$ weights \cite{muckenhoupt1}]
A weight $w$ is said to belong to the class $A_p$ for $1<p<\infty$, if there exists a positive constant $C$ such that for any ball $B$ in $\mathbb R^n$,
\begin{equation*}
\left(\frac1{|B|}\int_B w(x)\,dx\right)^{1/p}\left(\frac1{|B|}\int_B w(x)^{-p'/p}\,dx\right)^{1/{p'}}\leq C<\infty,
\end{equation*}
where $p'$ is the dual of $p$ such that $1/p+1/{p'}=1$. The class $A_1$ is defined replacing the above inequality by
\begin{equation*}
\frac1{|B|}\int_B w(x)\,dx\leq C\cdot\underset{x\in B}{\mbox{ess\,inf}}\;w(x),
\end{equation*}
for any ball $B$ in $\mathbb R^n$. We also define $A_\infty=\bigcup_{1\leq p<\infty}A_p$.
\end{defn}

\begin{defn}[$A_{p,q}$ weights \cite{muckenhoupt}]
A weight $w$ is said to belong to the class $A_{p,q}$ $(1<p,q<\infty)$, if there exists a positive constant $C$ such that for any ball $B$ in $\mathbb R^n$,
\begin{equation*}
\left(\frac1{|B|}\int_B w(x)^q\,dx\right)^{1/q}\left(\frac1{|B|}\int_B w(x)^{-p'}\,dx\right)^{1/{p'}}\leq C<\infty.
\end{equation*}
The class $A_{1,q}$ $(1<q<\infty)$ is defined replacing the above inequality by
\begin{equation*}
\left(\frac1{|B|}\int_B w(x)^q\,dx\right)^{1/q}\left(\underset{x\in B}{\mbox{ess\,sup}}\;\frac{1}{w(x)}\right)\leq C<\infty.
\end{equation*}
\end{defn}

\begin{lemma}\label{relation}
Suppose that $0<\alpha<n$, $1\leq p<n/{\alpha}$ and $1/q=1/p-{\alpha}/n$. The following statements are true (see \cite{lu}):

(i) If $p>1$, then $w\in A_{p,q}$ implies $w^q\in A_q$ and $w^{-p'}\in A_{p'};$

(ii) If $p=1$, then $w\in A_{1,q}$ if and only if $w^q\in A_1$.
\end{lemma}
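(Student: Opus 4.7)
My plan is to verify each conclusion by rewriting the $A_{p,q}$ defining inequality as a bona fide $A_r$ condition for the appropriate weight and then appealing to the standard inclusion $A_{r}\subset A_{t}$ whenever $r\le t$. The key arithmetic observation driving the argument is that $1/q=1/p-\alpha/n$ with $\alpha>0$ forces $p\le q$, equivalently $p'\ge q'$; this is precisely what will let me compare the exponents that arise.

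For part (i), I would set $v:=w^q$ and look at what $A_r$ requires of $v$, namely
\begin{equation*}
\left(\frac1{|B|}\int_B v\,dx\right)\left(\frac1{|B|}\int_B v^{-1/(r-1)}\,dx\right)^{r-1}\le C.
\end{equation*}
Matching exponents $v^{-1/(r-1)}=w^{-p'}$ forces $r-1=q/p'$, i.e.\ $r=1+q/p'$. A direct computation using $1/q=1/p-\alpha/n$ gives $r=q(n-\alpha)/n$, which is $\le q$ because $\alpha\ge0$. Raising the $A_{p,q}$ inequality to the $q$-th power shows exactly that $w^q\in A_{r}$, and then monotonicity of the $A_p$ scale yields $w^q\in A_q$. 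The second half of (i) is entirely parallel: put $u:=w^{-p'}$ and match $u^{-1/(s-1)}=w^q$, which forces $s=1+p'/q=p'(n-\alpha)/n\le p'$; raising the $A_{p,q}$ inequality to the $p'$-th power then delivers $u\in A_s\subset A_{p'}$.

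For part (ii), which is an \emph{equivalence}, I would just unwind the definitions. Writing $\mbox{ess\,sup}_{B}(1/w)=1/\mbox{ess\,inf}_{B}w$ converts the $A_{1,q}$ inequality into
\begin{equation*}
\left(\frac1{|B|}\int_B w^q\,dx\right)^{1/q}\le C\cdot\mbox{ess\,inf}_{x\in B}\,w(x);
\end{equation*}
raising to the $q$-th power and using $(\mbox{ess\,inf}\,w)^q=\mbox{ess\,inf}\,w^q$ gives the $A_1$ condition for $w^q$, and each step is reversible. The only real obstacle is bookkeeping the dualities $p\leftrightarrow p'$, $q\leftrightarrow q'$ and verifying the reduction $1+q/p'=q(n-\alpha)/n$ (and similarly for the other exponent); once those are in hand, the inclusions $A_r\subset A_t$ for $r\le t$ close the argument immediately.
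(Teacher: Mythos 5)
Your proposal is correct and is essentially the standard proof from the reference \cite{lu} that the paper cites for this lemma (the paper itself gives no proof). Raising the $A_{p,q}$ inequality to the $q$-th (resp.\ $p'$-th) power and matching exponents does show $w^q\in A_{1+q/p'}$ and $w^{-p'}\in A_{1+p'/q}$, the identities $1+q/p'=q(n-\alpha)/n$ and $1+p'/q=p'(n-\alpha)/n$ check out, and the nesting $A_r\subset A_t$ for $r\le t$ together with $p\le q$ (hence $p'\ge q'$) closes part (i), while part (ii) is the straightforward reversible unwinding you describe.
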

Given a ball $B$ and $\lambda>0$, $\lambda B$ denotes the ball with the same center as $B$ whose radius is $\lambda$ times that of $B$. For a given weight function $w$ and a Lebesgue measurable set $E$, we denote the characteristic function of $E$ by $\chi_E$, the Lebesgue measure of $E$ by $|E|$ and the weighted measure of $E$ by $w(E)$, where $w(E):=\int_E w(x)\,dx$. Given a weight $w$, we say that $w$ satisfies the doubling condition if there exists a universal constant $C>0$ such that for any ball $B$ in $\mathbb R^n$, we have
\begin{equation}\label{weights}
w(2B)\leq C\cdot w(B).
\end{equation}
When $w$ satisfies this doubling condition \eqref{weights}, we denote $w\in\Delta_2$ for brevity. We know that if $w$ is in $A_{\infty}$, then $w\in\Delta_2$ (see \cite{garcia}). Moreover, if $w\in A_\infty$, then for any ball $B$ and any measurable subset $E$ of $B$, there exists a number $\delta>0$ independent of $E$ and $B$ such that (see \cite{garcia})
\begin{equation}\label{compare}
\frac{w(E)}{w(B)}\le C\left(\frac{|E|}{|B|}\right)^\delta.
\end{equation}

Given a weight function $w$ on $\mathbb R^n$, for $1\leq p<\infty$, the weighted Lebesgue space $L^p(w)$ is defined as the set of all functions $f$ such that
\begin{equation*}
\big\|f\big\|_{L^p(w)}:=\bigg(\int_{\mathbb R^n}|f(x)|^pw(x)\,dx\bigg)^{1/p}<\infty.
\end{equation*}
We also denote by $WL^p(w)$($1\leq p<\infty$) the weighted weak Lebesgue space consisting of all measurable functions $f$ such that
\begin{equation*}
\big\|f\big\|_{WL^p(w)}:=
\sup_{\lambda>0}\lambda\cdot\Big[w\big(\big\{x\in\mathbb R^n:|f(x)|>\lambda\big\}\big)\Big]^{1/p}<\infty.
\end{equation*}

We next recall some definitions and basic facts about Orlicz spaces needed for the proofs of the main results. For further information on this subject, we refer to \cite{rao}. A function $\mathcal A:[0,+\infty)\rightarrow[0,+\infty)$ is said to be a Young function if it is continuous, convex and strictly increasing satisfying $\mathcal A(0)=0$ and $\mathcal A(t)\to +\infty$ as $t\to +\infty$. An important example of Young function is $\mathcal A(t)=t^p(1+\log^+t)^p$ with some $1\leq p<\infty$. Given a Young function $\mathcal A$, we define the $\mathcal A$-average of a function $f$ over a ball $B$ by means of the following Luxemburg norm:
\begin{equation*}
\big\|f\big\|_{\mathcal A,B}
:=\inf\left\{\lambda>0:\frac{1}{|B|}\int_B\mathcal A\left(\frac{|f(x)|}{\lambda}\right)dx\leq1\right\}.
\end{equation*}
In particular, when $\mathcal A(t)=t^p$, $1\leq p<\infty$, it is easy to see that $\mathcal A$ is a Young function and
\begin{equation*}
\big\|f\big\|_{\mathcal A,B}=\left(\frac{1}{|B|}\int_B|f(x)|^p\,dx\right)^{1/p};
\end{equation*}
that is, the Luxemburg norm coincides with the normalized $L^p$ norm.Recall that the following generalization of H\"older's inequality holds:
\begin{equation*}
\frac{1}{|B|}\int_B\big|f(x)\cdot g(x)\big|\,dx\leq 2\big\|f\big\|_{\mathcal A,B}\big\|g\big\|_{\bar{\mathcal A},B},
\end{equation*}
where $\bar{\mathcal A}$ is the complementary Young function associated to $\mathcal A$, which is given by $\bar{\mathcal A}(s):=\sup_{0\leq t<\infty}[st-\mathcal A(t)]$, $0\leq s<\infty$. Obviously, $\Phi(t)=t\cdot(1+\log^+t)$ is a Young function and its complementary Young function is $\bar{\Phi}(t)\approx e^t-1$. In the present situation, we denote $\|f\|_{\Phi,B}$ and $\|g\|_{\bar{\Phi},B}$ by $\|f\|_{L\log L,B}$ and $\|g\|_{\exp L,B}$, respectively. So we have
\begin{equation}\label{holder}
\frac{1}{|B|}\int_B\big|f(x)\cdot g(x)\big|\,dx\leq 2\big\|f\big\|_{L\log L,B}\big\|g\big\|_{\exp L,B}.
\end{equation}
There is a further generalization of H\"older's inequality that turns out to be useful for our purpose (see \cite{neil}): Let $\mathcal A$, $\mathcal B$ and $\mathcal C$ be Young functions such that for all $t>0$,
\begin{equation*}
\mathcal A^{-1}(t)\cdot\mathcal B^{-1}(t)\leq\mathcal C^{-1}(t),
\end{equation*}
where $\mathcal A^{-1}(t)$ is the inverse function of $\mathcal A(t)$. Then for all functions $f$ and $g$ and all balls $B\subset\mathbb R^n$,
\begin{equation}\label{three}
\big\|f\cdot g\big\|_{\mathcal C,B}\leq 2\big\|f\big\|_{\mathcal A,B}\big\|g\big\|_{\mathcal B,B}.
\end{equation}

Let us now recall the definition of the space of $BMO(\mathbb R^n)$(see \cite{john}). $BMO(\mathbb R^n)$ is the Banach function space modulo constants with the norm $\|\cdot\|_*$ defined by
\begin{equation*}
\|b\|_*:=\sup_{B}\frac{1}{|B|}\int_B|b(x)-b_B|\,dx<\infty,
\end{equation*}
where the supremum is taken over all balls $B$ in $\mathbb R^n$ and $b_B$ stands for the mean value of $b$ over $B$; that is,
\begin{equation*}
b_B:=\frac{1}{|B|}\int_B b(y)\,dy.
\end{equation*}

\subsection{Morrey type spaces}

Let us begin with the definitions of the weighted Morrey space with two weights and generalized Morrey space.

\begin{defn}[\cite{komori}]
Let $1\leq p<\infty$ and $0<\kappa<1$. For two weights $u$ and $v$ on $\mathbb R^n$, the weighted Morrey space $\mathcal L^{p,\kappa}(v,u)$ is defined by
\begin{equation*}
\mathcal L^{p,\kappa}(v,u):=\left\{f\in L^p_{loc}(v):\big\|f\big\|_{\mathcal L^{p,\kappa}(v,u)}<\infty\right\},
\end{equation*}
where
\begin{equation}\label{WMorrey}
\big\|f\big\|_{\mathcal L^{p,\kappa}(v,u)}:=\sup_B\left(\frac{1}{u(B)^{\kappa}}\int_B|f(x)|^pv(x)\,dx\right)^{1/p}
\end{equation}
and the supremum is taken over all balls $B$ in $\mathbb R^n$. If $v=u$, then we denote $\mathcal L^{p,\kappa}(v)$, for short.
\end{defn}

\begin{defn}
Let $1\leq p<\infty$, $0<\kappa<1$ and $w$ be a weight on $\mathbb R^n$. We denote by $W\mathcal L^{p,\kappa}(w)$ the weighted weak Morrey space of all measurable functions $f$ for which
\begin{equation}\label{WWMorrey}
\big\|f\big\|_{W\mathcal L^{p,\kappa}(w)}:=\sup_B\sup_{\sigma>0}\frac{1}{w(B)^{{\kappa}/p}}\sigma
\cdot\Big[w\big(\big\{x\in B:|f(x)|>\sigma\big\}\big)\Big]^{1/p}<\infty.
\end{equation}
\end{defn}

Let $\Theta=\Theta(r)$, $r>0$, be a growth function; that is, a positive increasing function on $(0,+\infty)$ and satisfy the following doubling condition:
\begin{equation}\label{doubling}
\Theta(2r)\leq D\cdot\Theta(r), \qquad \mbox{for all }\,r>0,
\end{equation}
where $D=D(\Theta)\ge1$ is a doubling constant independent of $r$.

\begin{defn}[\cite{mizuhara}]
Let $1\leq p<\infty$ and $\Theta$ be a growth function on $(0,+\infty)$. Then the generalized Morrey space $\mathcal L^{p,\Theta}(\mathbb R^n)$ is defined by
\begin{equation*}
\mathcal L^{p,\Theta}(\mathbb R^n):=\Big\{f\in L^p_{loc}(\mathbb R^n):\big\|f\big\|_{\mathcal L^{p,\Theta}(\mathbb R^n)}<\infty\Big\},
\end{equation*}
where
\begin{equation*}
\big\|f\big\|_{\mathcal L^{p,\Theta}(\mathbb R^n)}
:=\sup_{r>0;B(x_0,r)}\bigg(\frac{1}{\Theta(r)}\int_{B(x_0,r)}|f(x)|^p\,dx\bigg)^{1/p}
\end{equation*}
and the supremum is taken over all balls $B(x_0,r)$ in $\mathbb R^n$ with $x_0\in\mathbb R^n$.
\end{defn}

\begin{defn}
Let $1\leq p<\infty$ and $\Theta$ be a growth function on $(0,+\infty)$. We denote by $W\mathcal L^{p,\Theta}(\mathbb R^n)$ the generalized weak Morrey space of all measurable functions $f$ for which
\begin{equation*}
\big\|f\big\|_{W\mathcal L^{p,\Theta}(\mathbb R^n)}:=\sup_{B(x_0,r)}\sup_{\lambda>0}\frac{1}{\Theta(r)^{1/p}}\lambda\cdot\big|\big\{x\in B(x_0,r):|f(x)|>\lambda\big\}\big|^{1/p}<\infty.
\end{equation*}
\end{defn}

In order to unify the definitions given above, we now introduce Morrey type spaces associated to $\theta$ as follows.
Let $0\leq\kappa<1$. Assume that $\theta(\cdot)$ is a positive increasing function defined in $(0,+\infty)$ and satisfies the following $\mathcal D_\kappa$ condition:
\begin{equation}\label{D condition}
\frac{\theta(\xi)}{\xi^\kappa}\leq C\cdot\frac{\theta(\xi')}{(\xi')^\kappa},
\qquad \mbox{for any}\;0<\xi'<\xi<+\infty,
\end{equation}
where $C>0$ is a constant independent of $\xi$ and $\xi'$.

\begin{defn}
Let $1\leq p<\infty$, $0\leq\kappa<1$ and $\theta$ satisfy the $\mathcal D_\kappa$ condition $(\ref{D condition})$. For two weights $u$ and $v$ on $\mathbb R^n$, we denote by $\mathcal M^{p,\theta}(v,u)$ the generalized weighted Morrey space, the space of all locally integrable functions $f$ with finite norm.
\begin{equation*}
\mathcal M^{p,\theta}(v,u):=\Big\{f\in L^p_{loc}(v):\big\|f\big\|_{\mathcal M^{p,\theta}(v,u)}<\infty\Big\},
\end{equation*}
where the norm is given by
\begin{equation*}
\big\|f\big\|_{\mathcal M^{p,\theta}(v,u)}
:=\sup_B\left(\frac{1}{\theta(u(B))}\int_B |f(x)|^pv(x)\,dx\right)^{1/p}.
\end{equation*}
Here the supremum is taken over all balls $B$ in $\mathbb R^n$. If $v=u$, then we denote $\mathcal M^{p,\theta}(v)$, for short. Furthermore, we denote by $W\mathcal M^{p,\theta}(v)$ the generalized weighted weak Morrey space of all measurable functions $f$ for which
\begin{equation*}
\big\|f\big\|_{W\mathcal M^{p,\theta}(v)}:=\sup_B\sup_{\sigma>0}\frac{1}{\theta(v(B))^{1/p}}\sigma
\cdot\Big[v\big(\big\{x\in B:|f(x)|>\sigma\big\}\big)\Big]^{1/p}<\infty.
\end{equation*}
\end{defn}

According to this definition, we recover the spaces $\mathcal L^{p,\kappa}(v,u)$ and $W\mathcal L^{p,\kappa}(v)$ under the choice of $\theta(x)=x^{\kappa}$ with $0<\kappa<1$:
\begin{equation*}
 \mathcal L^{p,\kappa}(v,u)=\mathcal M^{p,\theta}(v,u)\big|_{\theta(x)=x^{\kappa}},\qquad
W\mathcal L^{p,\kappa}(v)= W\mathcal M^{p,\theta}(v)\big|_{\theta(x)=x^{\kappa}}.
\end{equation*}
Also, note that if $\theta(x)\equiv 1$, then $\mathcal M^{p,\theta}(v)=L^p(v)$ and $W\mathcal M^{p,\theta}(v)=WL^p(v)$, the classical weighted Lebesgue and weak Lebesgue spaces.

The aim of this paper is to extend Theorems 1.1--1.4 to the corresponding Morrey type spaces. Our main results on the boundedness of $I_{\alpha}$ in the Morrey type spaces associated to $\theta$ can be formulated as follows.

\begin{theorem}\label{mainthm:1}
Let $0<\alpha<n$, $1<p<n/{\alpha}$, $1/q=1/p-{\alpha}/n$ and $w\in A_{p,q}$. Assume that $\theta$ satisfies the $\mathcal D_\kappa$ condition $(\ref{D condition})$ with $0\leq\kappa<p/q$, then the fractional integral operator $I_{\alpha}$ is bounded from $\mathcal M^{p,\theta}(w^p,w^q)$ into $\mathcal M^{q,\theta^{q/p}}(w^q)$.
\end{theorem}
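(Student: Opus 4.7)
The plan is to fix an arbitrary ball $B=B(x_0,r_B)$ and perform the standard Morrey decomposition $f=f_1+f_2$ with $f_1=f\chi_{2B}$ and $f_2=f\chi_{(2B)^c}$. Since $I_{\alpha}$ is sublinear, it suffices to bound each of the two quantities
$$\frac{1}{\theta(w^q(B))^{1/p}}\left(\int_B |I_{\alpha} f_i(x)|^q w^q(x)\,dx\right)^{1/q},\qquad i=1,2,$$
uniformly in $B$ by a constant multiple of $\|f\|_{\mathcal M^{p,\theta}(w^p,w^q)}$; note that $\theta^{q/p}(w^q(B))^{1/q}=\theta(w^q(B))^{1/p}$, so this is exactly the target Morrey norm.

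For $f_1$ I would invoke Theorem \ref{strong} on the whole of $\mathbb R^n$, obtaining $(\int_B |I_\alpha f_1|^q w^q)^{1/q}\le C(\int_{2B}|f|^p w^p)^{1/p}$; the right-hand side is at most $C\theta(w^q(2B))^{1/p}\|f\|_{\mathcal M^{p,\theta}(w^p,w^q)}$ directly from the definition of the source norm. Lemma \ref{relation} gives $w^q\in A_q\subset A_\infty$, so $w^q$ is doubling, and then the $\mathcal D_\kappa$ condition \eqref{D condition} upgrades this to $\theta(w^q(2B))\le C\theta(w^q(B))$. Dividing by $\theta(w^q(B))^{1/p}$ closes the local estimate.

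For $f_2$ I would start from the standard annular pointwise bound: for $x\in B$,
$$|I_{\alpha}f_2(x)|\le C\sum_{k=1}^{\infty}\frac{1}{(2^k r_B)^{n-\alpha}}\int_{2^{k+1}B}|f(y)|\,dy.$$
Hölder's inequality in the form $\int|f|=\int(|f|w)\cdot w^{-1}$ combined with the $A_{p,q}$ condition — which, using $1/q+1/p'=1-\alpha/n$, produces $(w^{-p'}(2^{k+1}B))^{1/p'}\le C|2^{k+1}B|^{1-\alpha/n}/w^q(2^{k+1}B)^{1/q}$ — absorbs the volume factor $(2^k r_B)^{n-\alpha}\sim |2^{k+1}B|^{1-\alpha/n}$ and yields
$$|I_{\alpha}f_2(x)|\le C\|f\|_{\mathcal M^{p,\theta}(w^p,w^q)}\sum_{k=1}^{\infty}\frac{\theta(w^q(2^{k+1}B))^{1/p}}{w^q(2^{k+1}B)^{1/q}}.$$
Integrating the $q$-th power of this constant bound against $w^q$ over $B$ and dividing by $\theta(w^q(B))^{1/p}$ reduces everything to proving the uniform estimate
$$\sum_{k=1}^{\infty}\left(\frac{w^q(B)}{w^q(2^{k+1}B)}\right)^{\!1/q}\left(\frac{\theta(w^q(2^{k+1}B))}{\theta(w^q(B))}\right)^{\!1/p}\le C.$$

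The main obstacle is precisely the convergence of this tail series, and it is here that the hypothesis $\kappa<p/q$ enters decisively alongside $w^q\in A_\infty$. The $\mathcal D_\kappa$ condition dominates the $\theta$-ratio by $(w^q(2^{k+1}B)/w^q(B))^{\kappa}$, so the series reduces to $\sum_k(w^q(B)/w^q(2^{k+1}B))^{1/q-\kappa/p}$. Applying the $A_\infty$ comparison \eqref{compare} to the pair $B\subset 2^{k+1}B$ gives $w^q(B)/w^q(2^{k+1}B)\le C\,2^{-(k+1)n\delta}$ for some $\delta=\delta(w^q)>0$, so the tail is dominated by a geometric series of ratio $2^{-n\delta(1/q-\kappa/p)}$; this ratio is strictly less than $1$ exactly when $\kappa<p/q$, and the proof is complete.
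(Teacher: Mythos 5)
Your proposal is correct and follows essentially the same route as the paper: the $f_1+f_2$ decomposition with $f_1=f\chi_{2B}$, the local estimate via Theorem \ref{strong} together with the $\mathcal D_\kappa$ condition and doubling of $w^q$, and the global estimate via the annular pointwise bound, H\"older with the $A_{p,q}$ condition, and summation of the tail using the $A_\infty$ comparison \eqref{compare} with $\kappa<p/q$. All the key steps and the key inequality $\sum_j (w^q(B)/w^q(2^{j+1}B))^{1/q-\kappa/p}\le C$ match the paper's argument.
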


\begin{theorem}\label{mainthm:2}
Let $0<\alpha<n$, $p=1$, $q=n/{(n-\alpha)}$ and $w\in A_{1,q}$. Assume that $\theta$ satisfies the $\mathcal D_\kappa$ condition $(\ref{D condition})$ with $0\leq\kappa<1/q$, then the fractional integral operator $I_{\alpha}$ is bounded from $\mathcal M^{1,\theta}(w,w^q)$ into $W\mathcal M^{q,\theta^q}(w^q)$.
\end{theorem}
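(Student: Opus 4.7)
The plan is to follow the standard local/global decomposition for Morrey-type estimates. Fix a ball $B=B(x_0,r_B)$ and a level $\sigma>0$, and split $f=f_1+f_2$ with $f_1:=f\chi_{2B}$ and $f_2:=f\chi_{(2B)^c}$. Since $I_\alpha$ is sublinear, it suffices to bound $\sigma\cdot\bigl[w^q(\{x\in B:|I_\alpha f_i(x)|>\sigma/2\})\bigr]^{1/q}$ by $C\,\theta(w^q(B))\,\|f\|_{\mathcal M^{1,\theta}(w,w^q)}$ for $i=1,2$; dividing by $\theta(w^q(B))=(\theta^q(w^q(B)))^{1/q}$ and taking suprema over $B$ and $\sigma$ then yields the theorem.

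For the local piece $f_1$ I would invoke Theorem \ref{weak} directly to get $\sigma\,[w^q(\{|I_\alpha f_1|>\sigma/2\})]^{1/q}\leq C\int_{2B}|f|\,w$, and then bound the last integral by $\theta(w^q(2B))\,\|f\|_{\mathcal M^{1,\theta}(w,w^q)}$. Because $w\in A_{1,q}$ implies $w^q\in A_1\subset\Delta_2$, and because the $\mathcal D_\kappa$ condition forces $\theta(2\xi)\leq C\,2^\kappa\theta(\xi)$, we may replace $\theta(w^q(2B))$ by $C\,\theta(w^q(B))$.

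The main work is the global piece. For $x\in B$ and $y\in(2B)^c$ one has $|x-y|\approx|y-x_0|$, so an annular decomposition gives
\begin{equation*}
|I_\alpha f_2(x)|\leq C\sum_{k=1}^\infty\frac{1}{(2^kr_B)^{n-\alpha}}\int_{2^{k+1}B}|f(y)|\,dy.
\end{equation*}
I would then use the $A_{1,q}$ condition (which bounds $\operatorname{ess\,sup}_{2^{k+1}B}(1/w)$ by $|2^{k+1}B|^{1/q}/w^q(2^{k+1}B)^{1/q}$) to pass from $|f|$ to $|f|\,w$, and apply the Morrey norm on each $2^{k+1}B$. The crucial cancellation $n/q=n-\alpha$ (since $q=n/(n-\alpha)$) eliminates the $2^kr_B$ factors and leaves
\begin{equation*}
|I_\alpha f_2(x)|\leq C\,\|f\|_{\mathcal M^{1,\theta}(w,w^q)}\sum_{k=1}^\infty\frac{\theta(w^q(2^{k+1}B))}{w^q(2^{k+1}B)^{1/q}}.
\end{equation*}
To make the series converge I would factor $\theta(\cdot)/\cdot^{1/q}=[\theta(\cdot)/\cdot^\kappa]\cdot(\cdot)^{\kappa-1/q}$, dominate the first bracket by $\theta(w^q(B))/w^q(B)^\kappa$ via the $\mathcal D_\kappa$ condition, and exploit the reverse doubling $w^q(2^{k+1}B)\geq C\,2^{(k+1)n\delta}w^q(B)$ coming from $w^q\in A_\infty$ (inequality (\ref{compare})). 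Since $1/q-\kappa>0$, the remaining factor $w^q(2^{k+1}B)^{\kappa-1/q}$ decays like $2^{-(k+1)n\delta(1/q-\kappa)}$, giving a convergent geometric series and the pointwise bound $|I_\alpha f_2(x)|\leq C\,\theta(w^q(B))\,w^q(B)^{-1/q}\,\|f\|_{\mathcal M^{1,\theta}(w,w^q)}$. A trivial Chebyshev-type comparison against $w^q(B)$ then converts this into the required weak-type estimate. The single subtle point—and the main obstacle—is the interplay between $\mathcal D_\kappa$ and $A_\infty$ reverse doubling, which is exactly where the hypothesis $\kappa<1/q$ is used.
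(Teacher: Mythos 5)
Your proposal is correct and follows the same local/global decomposition as the paper's proof: split $f=f_1+f_2$ on $2B$, handle $f_1$ by the weighted weak $(1,q)$ inequality (Theorem \ref{weak}) plus the $\mathcal D_\kappa$ condition and doubling of $w^q$, and control $f_2$ through the annular pointwise estimate and the $A_\infty$ comparison inequality \eqref{compare}, with convergence governed by $1/q-\kappa>0$.

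There is one small but genuine difference in the tail estimate. The paper passes from the pointwise bound on $I_\alpha f_2$ to a weak-type bound via Chebyshev in $L^q(w^q)$, and then converts $\int_{2^{j+1}B}|f|\,dy$ to $\int_{2^{j+1}B}|f|w\,dy$ by invoking the $A_1$ condition on $w$ together with the characterization $w^q\in A_1\Leftrightarrow w\in A_1\cap RH_q$, using the reverse-H\"older inequality to derive \eqref{wq}. You instead read off $\operatorname{ess\,sup}_{2^{k+1}B}(1/w)\leq C\,|2^{k+1}B|^{1/q}w^q(2^{k+1}B)^{-1/q}$ directly from the $A_{1,q}$ definition, which gives the same cancellation $n/q=n-\alpha$ in one line and produces a uniform $L^\infty$ bound on $B$ for $I_\alpha f_2$; the weak-type estimate then follows trivially since $w^q(\{x\in B:|I_\alpha f_2(x)|>\sigma/2\})\leq w^q(B)$ when $\sigma/2$ is below the sup and is zero otherwise. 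This bypasses the $RH_q$ lemma and is arguably a bit cleaner, but the arithmetic and the role of the hypotheses are identical. One nitpick: in your treatment of the local piece you write $\theta(2\xi)\leq C\,2^\kappa\theta(\xi)$, but the relevant quantity is $\theta(w^q(2B))$, not $\theta(2\,w^q(B))$; what you actually need (and clearly intend) is $\theta(w^q(2B))\leq C(w^q(2B)/w^q(B))^\kappa\theta(w^q(B))\leq C'\theta(w^q(B))$, combining $\mathcal D_\kappa$ with $w^q\in\Delta_2$, exactly as the paper does.
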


Let $[b,I_{\alpha}]$ be the commutator formed by $I_{\alpha}$ and BMO function $b$. For the strong type estimate of the linear commutator $[b,I_{\alpha}]$ in the Morrey type spaces associated to $\theta$, we will prove
\begin{theorem}\label{mainthm:3}
Let $0<\alpha<n$, $1<p<n/{\alpha}$, $1/q=1/p-{\alpha}/n$ and $w\in A_{p,q}$. Assume that $\theta$ satisfies the $\mathcal D_\kappa$ condition $(\ref{D condition})$ with $0\leq\kappa<p/q$ and $b\in BMO(\mathbb R^n)$, then the commutator operator $[b,I_{\alpha}]$ is bounded from $\mathcal M^{p,\theta}(w^p,w^q)$ into $\mathcal M^{q,\theta^{q/p}}(w^q)$.
\end{theorem}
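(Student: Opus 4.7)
\medskip

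\noindent\textbf{Proof proposal.} Fix a ball $B=B(x_0,r_B)$ and write $f=f_1+f_2$ with $f_1=f\chi_{2B}$, $f_2=f\chi_{(2B)^c}$. Using the identity
\[
[b,I_\alpha]f(x)=[b,I_\alpha]f_1(x)+\bigl(b(x)-b_B\bigr)I_\alpha f_2(x)-I_\alpha\bigl((b-b_B)f_2\bigr)(x)=:J_1(x)+J_2(x)-J_3(x),
\]
I plan to show that for each $i=1,2,3$,
\[
\Bigl(\int_B|J_i(x)|^q w(x)^q\,dx\Bigr)^{1/q}\le C\,\|b\|_*\,\theta\bigl(w^q(B)\bigr)^{1/p}\,\|f\|_{\mathcal M^{p,\theta}(w^p,w^q)},
\]
which is exactly the $\mathcal M^{q,\theta^{q/p}}(w^q)$ bound after taking the supremum over $B$.

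For $J_1$ I would invoke Theorem \ref{cstrong} on the enlarged ball: $\|J_1\|_{L^q(w^q)}\le C\|b\|_*\|f_1\|_{L^p(w^p)}\le C\|b\|_*\theta(w^q(2B))^{1/p}\|f\|_{\mathcal M^{p,\theta}(w^p,w^q)}$, and then absorb $\theta(w^q(2B))$ into $\theta(w^q(B))$ using the $\mathcal D_\kappa$ condition \eqref{D condition} together with the doubling property of $w^q\in A_q\subset A_\infty$ (Lemma \ref{relation}). The main work is in $J_2$ and $J_3$. Splitting $(2B)^c=\bigcup_{k\ge 1}(2^{k+1}B\setminus 2^k B)$ and using the trivial bound $|x-y|\approx 2^k r_B$ for $x\in B$, $y\in 2^{k+1}B\setminus 2^k B$, I get the pointwise estimate
\[
|I_\alpha f_2(x)|\le C\sum_{k=1}^{\infty}\frac{1}{|2^{k+1}B|^{1-\alpha/n}}\int_{2^{k+1}B}|f(y)|\,dy,
\]
and an analogous bound for $J_3$ with $|f(y)|$ replaced by $|b(y)-b_B||f(y)|$. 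I then apply Hölder's inequality with exponents $p,p'$ and use the $A_{p,q}$ condition in the form $\bigl(\int_{2^{k+1}B}w^{-p'}\bigr)^{1/p'}\le C|2^{k+1}B|^{1-\alpha/n}w^q(2^{k+1}B)^{-1/q}$, which produces the ``Morrey-friendly'' factor $w^q(2^{k+1}B)^{-1/q}\|f\chi_{2^{k+1}B}\|_{L^p(w^p)}$.

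For $J_2$ the BMO factor stays outside and is handled by the weighted John--Nirenberg inequality on $w^q\in A_q$: $\bigl(\int_B|b-b_B|^q w^q\bigr)^{1/q}\le C\|b\|_* w^q(B)^{1/q}$. For $J_3$ I split $b(y)-b_B=(b(y)-b_{2^{k+1}B})+(b_{2^{k+1}B}-b_B)$; the second piece contributes the familiar $(k+1)\|b\|_*$, while for the first I use Hölder and the weighted John--Nirenberg inequality for $w^{-p'}\in A_{p'}$ (Lemma \ref{relation}), combined once more with the $A_{p,q}$ condition to convert $w^{-p'}(2^{k+1}B)^{1/p'}$ back into the $w^q$-language. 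In both cases the estimate reduces to controlling
\[
\sum_{k=1}^{\infty}(k+1)\,\frac{w^q(B)^{1/q}\,\theta(w^q(2^{k+1}B))^{1/p}}{w^q(2^{k+1}B)^{1/q}\,\theta(w^q(B))^{1/p}}.
\]
Applying \eqref{D condition} to $\theta(w^q(2^{k+1}B))/\theta(w^q(B))\le C(w^q(2^{k+1}B)/w^q(B))^\kappa$, this sum is dominated by $\sum_k(k+1)\bigl(w^q(B)/w^q(2^{k+1}B)\bigr)^{1/q-\kappa/p}$, and since $\kappa<p/q$ the exponent is positive; the reverse inequality \eqref{compare} for $w^q\in A_\infty$ then gives geometric decay $(w^q(B)/w^q(2^{k+1}B))^{1/q-\kappa/p}\le C\,2^{-nk\delta(1/q-\kappa/p)}$, making the series convergent.

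The main obstacle will be the careful bookkeeping in the $J_3$ estimate: one has to juggle two different weighted John--Nirenberg inequalities (for $w^q$ and for $w^{-p'}$), convert back and forth between $w^{-p'}$- and $w^q$-measures via the $A_{p,q}$ condition, keep the linear factor $(k+1)$ from the BMO mean differences, and finally verify that the combined exponent $1/q-\kappa/p$ survives positive so that the $A_\infty$ reverse doubling inequality delivers a summable geometric series. Everything else should be standard once this accounting is done.
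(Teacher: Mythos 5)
Your proposal is correct and follows essentially the same route as the paper: the same splitting $f=f_1+f_2$, the same commutator decomposition isolating $[b,I_\alpha]f_1$, $(b-b_B)I_\alpha f_2$, and $I_\alpha((b-b_B)f_2)$, the same pointwise annulus estimates, the same pair of weighted John--Nirenberg bounds (for $w^q\in A_q$ and $w^{-p'}\in A_{p'}$), the same use of the $A_{p,q}$ condition to translate $w^{-p'}$-integrals into $w^q$-language, and the same final summation via the $\mathcal D_\kappa$ condition plus the $A_\infty$ reverse inclusion \eqref{compare} with exponent $1/q-\kappa/p>0$. In the paper the three contributions you call $J_2$, and the two pieces of $J_3$, appear under the labels $J_3$, $J_4$, $J_5$, but the bookkeeping and the key estimates \eqref{theta1} and \eqref{theta3} match what you outline.
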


To obtain endpoint estimate for the linear commutator $[b,I_{\alpha}]$, we first need to define the weighted $\mathcal A$-average of a function $f$ over a ball $B$ by means of the weighted Luxemburg norm; that is, given a Young function $\mathcal A$ and $w\in A_\infty$, we define (see \cite{rao,zhang} for instance)
\begin{equation*}
\big\|f\big\|_{\mathcal A(w),B}:=\inf\left\{\sigma>0:\frac{1}{w(B)}
\int_B\mathcal A\left(\frac{|f(x)|}{\sigma}\right)\cdot w(x)\,dx\leq1\right\}.
\end{equation*}
When $\mathcal A(t)=t$, this norm is denoted by $\|\cdot\|_{L(w),B}$, and when $\Phi(t)=t\cdot(1+\log^+t)$, this norm is also denoted by $\|\cdot\|_{L\log L(w),B}$. The complementary Young function of $\Phi(t)$ is $\bar{\Phi}(t)\approx e^t-1$ with mean Luxemburg norm denoted by $\|\cdot\|_{\exp L(w),B}$. For $w\in A_\infty$ and for every ball $B$ in $\mathbb R^n$, we can also show the weighted version of \eqref{holder}. Namely, the following generalized H\"older's inequality in the weighted setting
\begin{equation}\label{Wholder}
\frac{1}{w(B)}\int_B|f(x)\cdot g(x)|w(x)\,dx\leq C\big\|f\big\|_{L\log L(w),B}\big\|g\big\|_{\exp L(w),B}
\end{equation}
is true (see \cite{zhang} for instance). Now we introduce new Morrey type spaces of $L\log L$ type associated to $\theta$ as follows.

\begin{defn}\label{logL}
Let $p=1$, $0\leq\kappa<1$ and $\theta$ satisfy the $\mathcal D_\kappa$ condition $(\ref{D condition})$. For two weights $u$ and $v$ on $\mathbb R^n$, we denote by $\mathcal M^{1,\theta}_{L\log L}(v,u)$ the generalized weighted Morrey space of $L\log L$ type, the space of all locally integrable functions $f$ defined on $\mathbb R^n$ with finite norm $\big\|f\big\|_{\mathcal M^{1,\theta}_{L\log L}(v,u)}$.
\begin{equation*}
\mathcal M^{1,\theta}_{L\log L}(v,u):=\left\{f\in L^1_{loc}(v):\big\|f\big\|_{\mathcal M^{1,\theta}_{L\log L}(v,u)}<\infty\right\},
\end{equation*}
where
\begin{equation*}
\big\|f\big\|_{\mathcal M^{1,\theta}_{L\log L}(v,u)}
:=\sup_B\left\{\frac{v(B)}{\theta(u(B))}\cdot\big\|f\big\|_{L\log L(v),B}\right\}.
\end{equation*}
Here the supremum is taken over all balls $B$ in $\mathbb R^n$. If $v=u$, then we denote $\mathcal M^{1,\theta}_{L\log L}(v)$ for brevity.
\end{defn}

Note that $t\leq t\cdot(1+\log^+t)$ for all $t>0$, then for any ball $B\subset\mathbb R^n$ and $v\in A_\infty$, we have $\big\|f\big\|_{L(v),B}\leq \big\|f\big\|_{L\log L(v),B}$ by definition, i.e., the inequality
\begin{equation}\label{main esti1}
\big\|f\big\|_{L(v),B}=\frac{1}{v(B)}\int_B|f(x)|\cdot v(x)\,dx\leq\big\|f\big\|_{L\log L(v),B}
\end{equation}
holds for any ball $B\subset\mathbb R^n$. From this, we can further see that when $\theta$ satisfies the $\mathcal D_\kappa$ condition $(\ref{D condition})$ with $0\leq\kappa<1$, and $u$ is another weight function,
\begin{equation}\label{main esti2}
\begin{split}
\frac{1}{\theta(u(B))}\int_B|f(x)|\cdot v(x)\,dx
&=\frac{v(B)}{\theta(u(B))}\cdot\frac{1}{v(B)}\int_B|f(x)|\cdot v(x)\,dx\\
&=\frac{v(B)}{\theta(u(B))}\cdot\big\|f\big\|_{L(v),B}\\
&\leq\frac{v(B)}{\theta(u(B))}\cdot\big\|f\big\|_{L\log L(v),B}.
\end{split}
\end{equation}
Hence, we have $\mathcal M^{1,\theta}_{L\log L}(v,u)\subset\mathcal M^{1,\theta}(v,u)$ by definition.

In Definition \ref{logL}, we also consider the special case when $\theta$ is taken to be $\theta(x)=x^\kappa$ with $0<\kappa<1$, and denote the corresponding space by $\mathcal L^{1,\kappa}_{L\log L}(v,u)$.

\begin{defn}
Let $p=1$ and $0<\kappa<1$. For two weights $u$ and $v$ on $\mathbb R^n$, we denote by $\mathcal L^{1,\kappa}_{L\log L}(v,u)$ the weighted Morrey space of $L\log L$ type, the space of all locally integrable functions $f$ defined on $\mathbb R^n$ with finite norm $\big\|f\big\|_{\mathcal L^{1,\kappa}_{L\log L}(v,u)}$.
\begin{equation*}
\mathcal L^{1,\kappa}_{L\log L}(v,u):=\left\{f\in L^1_{loc}(v):\big\|f\big\|_{\mathcal L^{1,\kappa}_{L\log L}(v,u)}<\infty\right\},
\end{equation*}
where
\begin{equation*}
\big\|f\big\|_{\mathcal L^{1,\kappa}_{L\log L}(v,u)}
:=\sup_B\left\{\frac{v(B)}{u(B)^{\kappa}}\cdot\big\|f\big\|_{L\log L(v),B}\right\}.
\end{equation*}
In this situation, we have $\mathcal L^{1,\kappa}_{L\log L}(v,u)\subset\mathcal L^{1,\kappa}(v,u)$.
\end{defn}

In the endpoint case $p=1$, we will prove the following weak type $L\log L$ estimate of the linear commutator $[b,I_{\alpha}]$ in the Morrey type space associated to $\theta$.

\begin{theorem}\label{mainthm:4}
Let $0<\alpha<n$, $p=1$, $q=n/{(n-\alpha)}$ and $w\in A_{1,q}$. Assume that $\theta$ satisfies the $\mathcal D_\kappa$ condition $(\ref{D condition})$ with $0\leq\kappa<1/q$ and $b\in BMO(\mathbb R^n)$, then for any given $\sigma>0$ and any ball $B\subset\mathbb R^n$, there exists a constant $C>0$ independent of $f$, $B$ and $\sigma>0$ such that
\begin{equation*}
\begin{split}
&\frac{1}{\theta(w^q(B))}\Big[w^q\big(\big\{x\in B:\big|[b,I_{\alpha}](f)(x)\big|>\sigma\big\}\big)\Big]^{1/q}
\leq C\cdot\bigg\|\Phi\left(\frac{|f|}{\,\sigma\,}\right)\bigg\|_{\mathcal M^{1,\theta}_{L\log L}(w,w^q)},
\end{split}
\end{equation*}
where $\Phi(t)=t\cdot(1+\log^+t)$. From the definitions, we can roughly say that the commutator operator $[b,I_{\alpha}]$ is bounded from $\mathcal M^{1,\theta}_{L\log L}(w,w^q)$ into $W\mathcal M^{q,\theta^q}(w^q)$.
\end{theorem}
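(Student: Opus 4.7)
The plan is a standard local/global decomposition around the test ball. Fix $B=B(x_0,r_B)$. By the linearity of $[b,I_\alpha]$, I reduce to $\sigma=1$ (replacing $f$ by $f/\sigma$ at the end). Split $f=f_1+f_2$ with $f_1:=f\chi_{2B}$, so that
$$w^q\bigl(\{x\in B:|[b,I_\alpha]f(x)|>1\}\bigr)^{1/q}\leq\sum_{i=1}^{2}w^q\bigl(\{x\in B:|[b,I_\alpha]f_i(x)|>1/2\}\bigr)^{1/q},$$
and treat the two terms separately.

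For the local piece $f_1$, I apply Theorem \ref{cweak} on $\Omega=2B$ (Lemma \ref{relation}(ii) provides $w^q\in A_1$) to get a bound by $C\int_{2B}\Phi(|f|)w\,dx = Cw(2B)\|\Phi(|f|)\|_{L(w),2B}$. Using $\eqref{main esti1}$ and the definition of $\mathcal M^{1,\theta}_{L\log L}(w,w^q)$, the latter is $\leq\theta(w^q(2B))\|\Phi(|f|)\|_{\mathcal M^{1,\theta}_{L\log L}(w,w^q)}$. Since $w^q\in A_1\subset\Delta_2$ and $\eqref{D condition}$ implies $\theta(2t)\leq C\theta(t)$, the factor $\theta(w^q(2B))$ is $\leq C\theta(w^q(B))$, which yields the desired bound upon dividing by $\theta(w^q(B))$.

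For the global piece, split $[b,I_\alpha]f_2(x)=(b(x)-b_B)I_\alpha f_2(x)-I_\alpha((b-b_B)f_2)(x)=:T_1(x)-T_2(x)$. Using $|x-y|\sim 2^k r_B$ for $x\in B$ and $y\in 2^{k+1}B\setminus 2^kB$, an annular decomposition yields uniformly in $x\in B$
$$|I_\alpha f_2(x)|\leq C\sum_{k=1}^{\infty}\frac{1}{(2^k r_B)^{n-\alpha}}\int_{2^{k+1}B}|f(y)|\,dy=:\mathcal A,$$
and an analogous bound $|T_2(x)|\leq\mathcal B$ with $|b-b_B||f|$ in place of $|f|$. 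To bring in the weight, I use that $w\in A_{1,q}$ yields $w(y)^{-1}\leq C(|B'|/w^q(B'))^{1/q}$ a.e. on any ball $B'$, together with the identity $|2^{k+1}B|^{1/q}\sim(2^k r_B)^{n-\alpha}$ (valid because $1/q=1-\alpha/n$), which produces
$$\mathcal A\leq C\sum_{k=1}^\infty\frac{w(2^{k+1}B)}{w^q(2^{k+1}B)^{1/q}}\|\Phi(|f|)\|_{L\log L(w),2^{k+1}B},$$
after using $|f|\leq\Phi(|f|)$ and $\eqref{main esti1}$. Because $w^q\in A_1$ implies $w\in A_1$ by Jensen, the weighted generalized H\"older inequality $\eqref{Wholder}$ and weighted John--Nirenberg give $\|b-b_B\|_{\exp L(w),2^{k+1}B}\leq C(k+1)\|b\|_*$, so $\mathcal B$ admits the same bound with an extra factor $(k+1)\|b\|_*$.

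For the level sets, a trivial sup bound gives $w^q(\{T_2>1/4\})^{1/q}\leq 4\mathcal B\,w^q(B)^{1/q}$; for $T_1\leq\mathcal A|b-b_B|$, Chebyshev combined with the weighted $BMO$ estimate $\int_B|b-b_B|^qw^q\,dx\leq C\|b\|_*^q w^q(B)$ (from $w^q\in A_\infty$) gives $w^q(\{T_1>1/4\})^{1/q}\leq C\mathcal A\|b\|_* w^q(B)^{1/q}$. It remains to sum: the $k$-th term of $\mathcal A\,w^q(B)^{1/q}$ satisfies, by the Morrey-norm definition and $\eqref{D condition}$,
$$\frac{w(2^{k+1}B)\,w^q(B)^{1/q}}{w^q(2^{k+1}B)^{1/q}}\|\Phi(|f|)\|_{L\log L(w),2^{k+1}B}\leq C\,\theta(w^q(B))\Bigl(\frac{w^q(B)}{w^q(2^{k+1}B)}\Bigr)^{1/q-\kappa}\|\Phi(|f|)\|_{\mathcal M^{1,\theta}_{L\log L}(w,w^q)}.$$
Since $\kappa<1/q$ and $w^q\in A_\infty$, inequality $\eqref{compare}$ yields $(w^q(B)/w^q(2^{k+1}B))^{1/q-\kappa}\leq C2^{-(k+1)n\delta(1/q-\kappa)}$, producing a convergent geometric series (still convergent with the extra $(k+1)$ in the $\mathcal B$ case). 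The main technical obstacle is precisely this last step: the strict inequality $\kappa<1/q$ is exactly what forces the geometric decay to dominate the growth of $\theta$ along the dyadic annuli, and the weighted $L\log L$ constants must be tracked carefully so that the endpoint structure of the right-hand side is preserved throughout the summation.
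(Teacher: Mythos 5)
Your argument is correct and follows essentially the same route as the paper's proof: same decomposition $f=f_1+f_2$, Theorem \ref{cweak} plus the $\mathcal D_\kappa$ condition and doubling for the local piece, the commutator split $(b-b_B)I_\alpha f_2 - I_\alpha((b-b_B)f_2)$ with annular decay for the global piece, and the final geometric summation via \eqref{compare} and $\kappa<1/q$. The two cosmetic differences — using the $A_{1,q}$ inequality $\operatorname{ess\,sup}_{B'} w^{-1}\leq C(|B'|/w^q(B'))^{1/q}$ directly (rather than the paper's $w^q\in A_1\Leftrightarrow w\in A_1\cap RH_q$ route through \eqref{wq}), and packaging the split $b-b_B=(b-b_{2^{k+1}B})+(b_{2^{k+1}B}-b_B)$ into the single estimate $\|b-b_B\|_{\exp L(w),2^{k+1}B}\leq C(k+1)\|b\|_*$ instead of the paper's separate $J'_5,J'_6$ — do not change the substance; note only that this last bound follows from \eqref{Jensen} together with Lemma \ref{BMO}(i), not from \eqref{Wholder} as your wording suggests.
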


In particular, if we take $\theta(x)=x^\kappa$ with $0<\kappa<1$, then we immediately
get the following strong type estimate and endpoint estimate of $I_{\alpha}$ and $[b,I_{\alpha}]$ in the weighted Morrey spaces.

\begin{corollary}\label{cor:1}
Let $0<\alpha<n$, $1<p<n/{\alpha}$, $1/q=1/p-{\alpha}/n$ and $w\in A_{p,q}$. If $0<\kappa<p/q$, then the fractional integral operator $I_{\alpha}$ is bounded from $\mathcal L^{p,\kappa}(w^p,w^q)$ into $\mathcal L^{q,{\kappa q}/p}(w^q)$.
\end{corollary}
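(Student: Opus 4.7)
The plan is to derive this corollary as an immediate specialization of Theorem \ref{mainthm:1} by choosing the growth function $\theta(x)=x^{\kappa}$, so essentially no new work is required beyond checking compatibility of the hypotheses and identifying the resulting spaces.

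First I would verify that $\theta(x)=x^{\kappa}$ satisfies the $\mathcal D_\kappa$ condition \eqref{D condition}. This is trivial: for this choice one has $\theta(\xi)/\xi^{\kappa}\equiv 1$, so \eqref{D condition} holds as an equality with constant $C=1$. The assumptions $0<\alpha<n$, $1<p<n/\alpha$, $1/q=1/p-\alpha/n$, and $w\in A_{p,q}$ are exactly those of Theorem \ref{mainthm:1}, and the hypothesis $0<\kappa<p/q$ is a special case of $0\le\kappa<p/q$.

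Next I would identify the spaces. Directly from the Definition of $\mathcal M^{p,\theta}(v,u)$, when $\theta(x)=x^{\kappa}$ the norm becomes
\begin{equation*}
\sup_B\left(\frac{1}{u(B)^{\kappa}}\int_B|f(x)|^p v(x)\,dx\right)^{1/p},
\end{equation*}
which coincides with the norm \eqref{WMorrey} of $\mathcal L^{p,\kappa}(v,u)$; taking $v=w^p$ and $u=w^q$ gives $\mathcal M^{p,\theta}(w^p,w^q)=\mathcal L^{p,\kappa}(w^p,w^q)$. For the target space, the exponent $q/p$ transforms $\theta(x)=x^{\kappa}$ into $\theta^{q/p}(x)=x^{\kappa q/p}$, so by the same identification $\mathcal M^{q,\theta^{q/p}}(w^q)=\mathcal L^{q,\kappa q/p}(w^q)$.

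Combining these two observations with Theorem \ref{mainthm:1} yields exactly the stated boundedness of $I_{\alpha}$ from $\mathcal L^{p,\kappa}(w^p,w^q)$ into $\mathcal L^{q,\kappa q/p}(w^q)$. There is no genuine obstacle here: the entire content of the corollary lies in recognizing that $x^{\kappa}$ is a $\mathcal D_\kappa$ growth function and that the weighted Morrey spaces of \cite{komori} are recovered from $\mathcal M^{p,\theta}$ under this choice; the nontrivial analytic work has been absorbed into Theorem \ref{mainthm:1}.
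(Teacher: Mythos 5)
Your proposal is correct and is precisely the argument the paper intends: it notes just before the corollary that taking $\theta(x)=x^{\kappa}$ immediately yields Corollaries \ref{cor:1}--\ref{cor:4} from Theorems \ref{mainthm:1}--\ref{mainthm:4}, and the identifications $\mathcal M^{p,\theta}(v,u)=\mathcal L^{p,\kappa}(v,u)$ and $\theta^{q/p}(x)=x^{\kappa q/p}$ that you spell out are the whole content of that remark. No gaps.
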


\begin{corollary}\label{cor:2}
Let $0<\alpha<n$, $p=1$, $q=n/{(n-\alpha)}$ and $w\in A_{1,q}$. If $0<\kappa<1/q$, then the fractional integral operator $I_{\alpha}$ is bounded from $\mathcal L^{1,\kappa}(w,w^q)$ into $W\mathcal L^{q,\kappa q}(w^q)$.
\end{corollary}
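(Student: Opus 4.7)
The plan is to obtain Corollary \ref{cor:2} as an immediate specialization of Theorem \ref{mainthm:2}. Concretely, I would take $\theta(x)=x^{\kappa}$ with the prescribed $\kappa\in(0,1/q)$ and apply Theorem \ref{mainthm:2}, after making two routine identifications.

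First I would verify that $\theta(x)=x^{\kappa}$ satisfies the $\mathcal D_\kappa$ condition (\ref{D condition}) with the very same $\kappa$. This is immediate: the ratio $\theta(\xi)/\xi^{\kappa}$ is identically $1$, so the inequality holds with constant $C=1$ for all $0<\xi'<\xi<+\infty$. Moreover, the standing hypothesis $0<\kappa<1/q$ lies inside the admissible range $0\leq\kappa<1/q$ of Theorem \ref{mainthm:2}.

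Second I would match the function spaces on both sides. With $\theta(x)=x^{\kappa}$, the norm on $\mathcal M^{1,\theta}(w,w^q)$ unfolds to
\begin{equation*}
\sup_{B}\frac{1}{w^q(B)^{\kappa}}\int_B |f(x)|\,w(x)\,dx,
\end{equation*}
which is exactly the defining norm (\ref{WMorrey}) of $\mathcal L^{1,\kappa}(w,w^q)$ upon setting $p=1$, $v=w$, $u=w^q$. On the target side, $\theta^q(x)=x^{\kappa q}$, so $W\mathcal M^{q,\theta^q}(w^q)$ has norm
\begin{equation*}
\sup_{B}\sup_{\sigma>0}\frac{1}{w^q(B)^{\kappa}}\,\sigma\cdot\Big[w^q\big(\big\{x\in B:|f(x)|>\sigma\big\}\big)\Big]^{1/q},
\end{equation*}
which coincides with the norm of $W\mathcal L^{q,\kappa q}(w^q)$ from (\ref{WWMorrey}) after setting $p=q$ and replacing $\kappa$ by $\kappa q$. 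Hence Theorem \ref{mainthm:2} yields the claimed boundedness of $I_{\alpha}$ from $\mathcal L^{1,\kappa}(w,w^q)$ into $W\mathcal L^{q,\kappa q}(w^q)$ at once.

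Since the corollary is a direct specialization, I do not anticipate any real obstacle in its proof: all of the analytic content (covering arguments, splitting $f=f\chi_{2B}+f\chi_{(2B)^c}$, the local weak type bound from Theorem \ref{weak}, and the use of $w\in A_{1,q}$ together with the $\mathcal D_\kappa$ condition to sum the resulting geometric series) is already packaged inside Theorem \ref{mainthm:2}; the corollary requires only the bookkeeping of indices and norms indicated above.
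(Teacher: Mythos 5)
Your proof is correct and matches the paper's own route exactly: the paper states Corollary \ref{cor:2} as an immediate specialization of Theorem \ref{mainthm:2} under the substitution $\theta(x)=x^{\kappa}$, and your verification of the $\mathcal D_\kappa$ condition and the norm identifications $\mathcal M^{1,\theta}(w,w^q)=\mathcal L^{1,\kappa}(w,w^q)$, $W\mathcal M^{q,\theta^q}(w^q)=W\mathcal L^{q,\kappa q}(w^q)$ is precisely the bookkeeping required.
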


\begin{corollary}\label{cor:3}
Let $0<\alpha<n$, $1<p<n/{\alpha}$, $1/q=1/p-{\alpha}/n$ and $w\in A_{p,q}$. If $0<\kappa<p/q$ and $b\in BMO(\mathbb R^n)$, then the commutator operator $[b,I_{\alpha}]$ is bounded from $\mathcal L^{p,\kappa}(w^p,w^q)$ into $\mathcal L^{q,{\kappa q}/p}(w^q)$.
\end{corollary}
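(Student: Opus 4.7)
The plan is to obtain Corollary \ref{cor:3} as a direct specialization of Theorem \ref{mainthm:3}, so the work amounts to verifying that the growth function $\theta(x)=x^\kappa$ fits the hypotheses and that the resulting Morrey type spaces collapse to the classical weighted Morrey spaces named in the statement. First, I would check the $\mathcal D_\kappa$ condition $(\ref{D condition})$ for $\theta(x)=x^\kappa$: the quotient $\theta(\xi)/\xi^\kappa\equiv1$ for every $\xi>0$, so the inequality holds with constant $C=1$. The remaining hypotheses $0<\alpha<n$, $1<p<n/\alpha$, $1/q=1/p-\alpha/n$, $w\in A_{p,q}$, and $b\in BMO(\mathbb R^n)$ are shared verbatim, while the range restriction $0<\kappa<p/q$ assumed in the corollary is a special case of $0\leq\kappa<p/q$ required by Theorem \ref{mainthm:3}.

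Next I would identify the domain and codomain under the substitution $\theta(x)=x^\kappa$. By the definition of $\mathcal M^{p,\theta}(v,u)$, one recovers
\begin{equation*}
\mathcal M^{p,\theta}(w^p,w^q)\big|_{\theta(x)=x^\kappa}=\mathcal L^{p,\kappa}(w^p,w^q),
\end{equation*}
which was already observed in the excerpt following Definition 2.7. For the codomain, the growth function appearing in Theorem \ref{mainthm:3} is $\theta^{q/p}$, which under our choice becomes $(x^\kappa)^{q/p}=x^{\kappa q/p}$; hence
\begin{equation*}
\mathcal M^{q,\theta^{q/p}}(w^q)\big|_{\theta(x)=x^\kappa}=\mathcal L^{q,\kappa q/p}(w^q).
\end{equation*}
Note that since $0<\kappa<p/q\leq1$ and $p<q$, the exponent $\kappa q/p$ lies in $(0,1)$, so the target space is a well-defined weighted Morrey space in the sense of Definition 2.3.

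Invoking Theorem \ref{mainthm:3} with these choices then gives, for every $f\in\mathcal L^{p,\kappa}(w^p,w^q)$,
\begin{equation*}
\big\|[b,I_\alpha]f\big\|_{\mathcal L^{q,\kappa q/p}(w^q)}
=\big\|[b,I_\alpha]f\big\|_{\mathcal M^{q,\theta^{q/p}}(w^q)}
\leq C\big\|f\big\|_{\mathcal M^{p,\theta}(w^p,w^q)}
=C\big\|f\big\|_{\mathcal L^{p,\kappa}(w^p,w^q)},
\end{equation*}
which is exactly the conclusion of Corollary \ref{cor:3}. Because the reduction is purely a matter of unpacking definitions, there is no genuine obstacle at the level of the corollary itself; all the analytic content is absorbed into Theorem \ref{mainthm:3}, which handles the general growth function $\theta$ under the $\mathcal D_\kappa$ condition and is available by hypothesis.
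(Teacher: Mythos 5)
Your proposal is correct and follows exactly the route the paper takes: the paper presents Corollary \ref{cor:3} as an immediate specialization of Theorem \ref{mainthm:3} under $\theta(x)=x^{\kappa}$, with no separate proof given. Your verification that $\theta(x)=x^{\kappa}$ satisfies the $\mathcal D_\kappa$ condition trivially, and that the spaces $\mathcal M^{p,\theta}(w^p,w^q)$ and $\mathcal M^{q,\theta^{q/p}}(w^q)$ reduce to $\mathcal L^{p,\kappa}(w^p,w^q)$ and $\mathcal L^{q,\kappa q/p}(w^q)$ respectively, is precisely the unpacking the paper relies on.
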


\begin{corollary}\label{cor:4}
Let $0<\alpha<n$, $p=1$, $q=n/{(n-\alpha)}$ and $w\in A_{1,q}$. If $0<\kappa<1/q$ and $b\in BMO(\mathbb R^n)$, then for any given $\sigma>0$ and any ball $B\subset\mathbb R^n$, there exists a constant $C>0$ independent of $f$, $B$ and $\sigma>0$ such that
\begin{equation*}
\begin{split}
&\frac{1}{w^q(B)^{\kappa}}\Big[w^q\big(\big\{x\in B:\big|[b,I_{\alpha}](f)(x)\big|>\sigma\big\}\big)\Big]^{1/q}
\leq C\cdot\bigg\|\Phi\left(\frac{|f|}{\,\sigma\,}\right)\bigg\|_{\mathcal L^{1,\kappa}_{L\log L}(w,w^q)},
\end{split}
\end{equation*}
where $\Phi(t)=t\cdot(1+\log^+t)$.
\end{corollary}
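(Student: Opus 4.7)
The plan is to derive Corollary \ref{cor:4} as a direct specialization of Theorem \ref{mainthm:4} by taking $\theta(x)=x^\kappa$, rather than running a separate argument. So the work is really just a matching of notation between the two statements and a check that the hypotheses line up.

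First, I would verify that $\theta(x)=x^\kappa$ satisfies the $\mathcal D_\kappa$ condition (\ref{D condition}): for this choice $\theta(\xi)/\xi^\kappa\equiv 1$, so (\ref{D condition}) holds with equality (and constant $1$). The remaining hypotheses of Theorem \ref{mainthm:4}, namely $0<\alpha<n$, $p=1$, $q=n/(n-\alpha)$, $w\in A_{1,q}$, $0\le\kappa<1/q$, and $b\in BMO(\mathbb R^n)$, are exactly the hypotheses appearing in Corollary \ref{cor:4}, so Theorem \ref{mainthm:4} is applicable.

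Next I would identify the two sides of the inequality produced by Theorem \ref{mainthm:4} with the corresponding expressions in Corollary \ref{cor:4}. On the left, $\theta(w^q(B))=w^q(B)^{\kappa}$, so the prefactor $1/\theta(w^q(B))$ collapses to $1/w^q(B)^{\kappa}$, which is precisely the prefactor in Corollary \ref{cor:4}. On the right, comparing Definition \ref{logL} with the definition of $\mathcal L^{1,\kappa}_{L\log L}(v,u)$, one sees that for $\theta(x)=x^\kappa$,
$$
\bigg\|\Phi\!\left(\frac{|f|}{\sigma}\right)\bigg\|_{\mathcal M^{1,\theta}_{L\log L}(w,w^q)}
=\sup_B\frac{w(B)}{w^q(B)^{\kappa}}\cdot\bigg\|\Phi\!\left(\frac{|f|}{\sigma}\right)\bigg\|_{L\log L(w),B}
=\bigg\|\Phi\!\left(\frac{|f|}{\sigma}\right)\bigg\|_{\mathcal L^{1,\kappa}_{L\log L}(w,w^q)}.
$$
Thus the conclusion of Theorem \ref{mainthm:4}, read for this particular $\theta$, is verbatim the conclusion of Corollary \ref{cor:4}.

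There is essentially no obstacle in this deduction; the only point deserving a line of attention is the compatibility of the exponent $\kappa$ in $\theta(x)=x^\kappa$ with the exponent appearing in the $\mathcal D_\kappa$ condition — these must agree for the $\mathcal D_\kappa$ condition to reduce to a triviality, and this in turn is what makes the Morrey-space norms on both sides match up without any extra factor. Everything else is a change-of-notation exercise, so the proof will consist of one paragraph invoking Theorem \ref{mainthm:4} with $\theta(x)=x^\kappa$ and pointing out the two identifications above.
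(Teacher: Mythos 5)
Your proposal is correct and follows exactly the paper's own deduction: the paper explicitly obtains Corollary~\ref{cor:4} by specializing Theorem~\ref{mainthm:4} to $\theta(x)=x^\kappa$, which satisfies the $\mathcal D_\kappa$ condition trivially and makes $\mathcal M^{1,\theta}_{L\log L}(w,w^q)=\mathcal L^{1,\kappa}_{L\log L}(w,w^q)$ and $\theta(w^q(B))=w^q(B)^\kappa$. No gap; this is the intended one-line specialization.
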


Moreover, for the extreme case $\kappa=p/q$ of Corollary \ref{cor:1}, we will show that $I_{\alpha}$ is bounded from $\mathcal L^{p,\kappa}(w^p,w^q)$ into $BMO(\mathbb R^n)$.
\begin{theorem}\label{mainthm:end1}
Let $0<\alpha<n$, $1<p<n/{\alpha}$, $1/q=1/p-{\alpha}/n$ and $w\in A_{p,q}$. If $\kappa=p/q$, then the fractional integral operator $I_{\alpha}$ is bounded from $\mathcal L^{p,\kappa}(w^p,w^q)$ into $BMO(\mathbb R^n)$.
\end{theorem}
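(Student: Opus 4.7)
The plan is to follow the classical BMO-decomposition argument adapted to the weighted Morrey setting. Fix an arbitrary ball $B=B(x_0,r)$ and split $f=f_1+f_2$, where $f_1:=f\chi_{2B}$ and $f_2:=f\chi_{(2B)^c}$. Since $I_\alpha$ is linear, write $I_\alpha f=I_\alpha f_1+I_\alpha f_2$ and look for a constant $C_B$ (which will be chosen as the natural ``centering constant'' for the far part) such that $\frac{1}{|B|}\int_B |I_\alpha f(x)-C_B|\,dx$ is bounded by a multiple of $\|f\|_{\mathcal L^{p,\kappa}(w^p,w^q)}$ independent of $B$; taking the supremum over $B$ yields the BMO estimate.

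For the local piece I would estimate $\frac{1}{|B|}\int_B|I_\alpha f_1(x)|\,dx$ by inserting the factor $w\cdot w^{-1}$ and applying H\"older with exponents $q$ and $q'$, so that
\begin{equation*}
\int_B|I_\alpha f_1|\,dx\leq\Bigl(\int_B|I_\alpha f_1|^q w^q\Bigr)^{1/q}\Bigl(\int_B w^{-q'}\Bigr)^{1/q'}.
\end{equation*}
Theorem~\ref{strong} then bounds the first factor by $C\bigl(\int_{2B}|f|^p w^p\bigr)^{1/p}$, which in turn is at most $\|f\|_{\mathcal L^{p,\kappa}(w^p,w^q)}\cdot w^q(2B)^{\kappa/p}=\|f\|_{\mathcal L^{p,\kappa}(w^p,w^q)}\cdot w^q(2B)^{1/q}$ since $\kappa=p/q$. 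Using Lemma~\ref{relation} ($w^q\in A_q$, hence doubling and satisfying $\bigl(|B|^{-1}\int_B w^q\bigr)^{1/q}\bigl(|B|^{-1}\int_B w^{-q'}\bigr)^{1/q'}\leq C$) together with $w^q(2B)\leq C w^q(B)$, the product $w^q(2B)^{1/q}\bigl(\int_B w^{-q'}\bigr)^{1/q'}$ collapses to $C|B|$, giving $\frac{1}{|B|}\int_B|I_\alpha f_1|\,dx\leq C\|f\|_{\mathcal L^{p,\kappa}(w^p,w^q)}$.

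For the far piece, I would take $C_B:=\frac{1}{\gamma(\alpha)}\int_{(2B)^c}\frac{f(y)}{|x_0-y|^{n-\alpha}}\,dy$ and show that $|I_\alpha f_2(x)-C_B|\leq C\|f\|_{\mathcal L^{p,\kappa}(w^p,w^q)}$ for every $x\in B$ (the pointwise estimate, so the average over $B$ is trivial). For $x\in B$ and $y\notin 2B$ the standard mean-value bound $\bigl||x-y|^{\alpha-n}-|x_0-y|^{\alpha-n}\bigr|\leq Cr\,|x_0-y|^{\alpha-n-1}$ gives
\begin{equation*}
|I_\alpha f_2(x)-C_B|\leq Cr\sum_{k=1}^\infty(2^kr)^{-(n-\alpha+1)}\int_{2^{k+1}B}|f(y)|\,dy.
\end{equation*}
On each annulus I apply H\"older with exponents $p$ and $p'$ (again after inserting $w\cdot w^{-1}$), use the Morrey-norm bound $\bigl(\int_{2^{k+1}B}|f|^p w^p\bigr)^{1/p}\leq\|f\|\cdot w^q(2^{k+1}B)^{1/q}$, and invoke the $A_{p,q}$ condition to get $w^q(2^{k+1}B)^{1/q}\bigl(\int_{2^{k+1}B}w^{-p'}\bigr)^{1/p'}\leq C(2^{k+1}r)^{n/q+n/p'}=C(2^{k+1}r)^{n-\alpha}$, where the exponent computation uses $1/q+1/p'=1-\alpha/n$. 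This yields $\int_{2^{k+1}B}|f|\leq C\|f\|(2^{k+1}r)^{n-\alpha}$, and after cancelling with $(2^kr)^{-(n-\alpha+1)}$ and multiplying by $r$ we obtain a geometric series $\sum_k 2^{-k}$ whose sum is finite.

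The only subtlety, and what I expect to be the most delicate point, is the initial justification that $I_\alpha f$ makes sense modulo constants on $B$: $I_\alpha f_1$ is pointwise a.e. defined thanks to Theorem~\ref{strong} applied to $f_1\in L^p(w^p)$, while the centering $C_B$ is needed precisely to tame the tail behaviour of $I_\alpha f_2$, which is only conditionally convergent. Once this decomposition is set up, adding the local and far estimates gives $\frac{1}{|B|}\int_B|I_\alpha f(x)-((I_\alpha f_1)_B+C_B)|\,dx\leq C\|f\|_{\mathcal L^{p,\kappa}(w^p,w^q)}$, and taking the supremum over $B$ completes the proof.
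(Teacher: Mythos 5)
Your proposal is correct and follows essentially the same route as the paper: split $f$ into a local piece and a tail, handle the local piece with the weighted $(L^p,L^q)$ bound for $I_\alpha$, H\"older with exponents $q,q'$, the $A_q$ bound on $\int_B w^{-q'}$, and the identity $\kappa=p/q$, then handle the tail via the mean-value estimate on the kernel difference together with H\"older, the $A_{p,q}$ condition, and the exponent identity $1/q+1/p'=1-\alpha/n$. The only cosmetic differences are that the paper cuts at $4B$ and centers at the ball average $(I_\alpha f_2)_B$ (so that the same kernel-difference bound applies to $I_\alpha f_2(x)-I_\alpha f_2(y)$ for $x,y\in B$), whereas you cut at $2B$ and center at $x_0$; your explicit remark that the tail integral is only conditionally defined, so the centering constant is what makes the quantity meaningful, is a welcome clarification of a point the paper leaves implicit.
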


It should be pointed out that Corollaries \ref{cor:1} through \ref{cor:3} were given by Komori and Shirai in \cite{komori}. Corollary \ref{cor:4} and Theorem \ref{mainthm:end1} are new results.

\begin{defn}
In the unweighted case (when $u=v\equiv1$), we denote the corresponding unweighted Morrey type spaces associated to $\theta$ by $\mathcal M^{p,\theta}(\mathbb R^n)$, $W\mathcal M^{p,\theta}(\mathbb R^n)$ and $\mathcal M^{1,\theta}_{L\log L}(\mathbb R^n)$, respectively. That is, let $1\leq p<\infty$ and $\theta$ satisfy the $\mathcal D_\kappa$ condition $(\ref{D condition})$ with $0\leq\kappa<1$, we define
\begin{equation*}
\mathcal M^{p,\theta}(\mathbb R^n):=\left\{f\in L^p_{loc}(\mathbb R^n):
\big\|f\big\|_{\mathcal M^{p,\theta}(\mathbb R^n)}=\sup_B\bigg(\frac{1}{\theta(|B|)}\int_B |f(x)|^p\,dx\bigg)^{1/p}<\infty\right\},
\end{equation*}
\begin{equation*}
W\mathcal M^{p,\theta}(\mathbb R^n):=\left\{f:
\big\|f\big\|_{W\mathcal M^{p,\theta}(\mathbb R^n)}=\sup_B\sup_{\sigma>0}\frac{1}{\theta(|B|)^{1/p}}\sigma
\cdot \Big|\big\{x\in B:|f(x)|>\sigma\big\}\Big|^{1/p}<\infty\right\},
\end{equation*}
and
\begin{equation*}
\mathcal M^{1,\theta}_{L\log L}(\mathbb R^n):=\left\{f\in L^1_{loc}(\mathbb R^n):
\big\|f\big\|_{\mathcal M^{1,\theta}_{L\log L}(\mathbb R^n)}
=\sup_B\left(\frac{|B|}{\theta(|B|)}\cdot\big\|f\big\|_{L\log L,B}\right)<\infty\right\}.
\end{equation*}
\end{defn}

Naturally, when $u(x)=v(x)\equiv 1$ we have the following unweighted results.
\begin{corollary}\label{cor:5}
Let $0<\alpha<n$, $1<p<n/{\alpha}$ and $1/q=1/p-{\alpha}/n$. Assume that $\theta$ satisfies the $\mathcal D_\kappa$ condition $(\ref{D condition})$ with $0\leq\kappa<p/q$, then the fractional integral operator $I_{\alpha}$ is bounded from $\mathcal M^{p,\theta}(\mathbb R^n)$ into $\mathcal M^{q,\theta^{q/p}}(\mathbb R^n)$.
\end{corollary}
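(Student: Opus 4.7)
The plan is to deduce Corollary~\ref{cor:5} directly from Theorem~\ref{mainthm:1} by specializing the weight $w$ to be identically $1$; this is the standard route for transferring a weighted Morrey estimate to its unweighted counterpart when the underlying spaces degenerate correctly.

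First I would verify that the trivial weight $w(x)\equiv 1$ belongs to the class $A_{p,q}$. For any ball $B\subset\mathbb R^n$, the product
\begin{equation*}
\left(\frac{1}{|B|}\int_B 1\,dx\right)^{1/q}\left(\frac{1}{|B|}\int_B 1\,dx\right)^{1/p'}
\end{equation*}
equals $1$, so the $A_{p,q}$ condition holds with constant $1$, and the hypotheses of Theorem~\ref{mainthm:1} are fulfilled for the given $\alpha$, $p$, $q$ and $\theta$ (with the same range $0\leq\kappa<p/q$).

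Next I would identify the weighted Morrey type spaces with their unweighted versions. Since $w\equiv 1$ forces $w^p\equiv w^q\equiv 1$, we have $w^q(B)=|B|$ for every ball $B$, and consequently the norms reduce as
\begin{equation*}
\|f\|_{\mathcal M^{p,\theta}(w^p,w^q)}=\sup_B\left(\frac{1}{\theta(|B|)}\int_B|f(x)|^p\,dx\right)^{1/p}=\|f\|_{\mathcal M^{p,\theta}(\mathbb R^n)},
\end{equation*}
and analogously $\|f\|_{\mathcal M^{q,\theta^{q/p}}(w^q)}=\|f\|_{\mathcal M^{q,\theta^{q/p}}(\mathbb R^n)}$, by the very definitions of these spaces given earlier.

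Combining these two observations, Theorem~\ref{mainthm:1} applied to $w\equiv 1$ reads exactly as the conclusion of Corollary~\ref{cor:5}. No substantive obstacle arises: the argument is a direct specialization of the main weighted theorem, and the only item deserving explicit mention is the trivial membership $1\in A_{p,q}$ together with the identification of the function-space norms.
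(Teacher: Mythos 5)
Your proposal is correct and follows exactly the route the paper takes: the paper simply remarks that the unweighted results follow by taking $u=v\equiv 1$ in the weighted theorems, which is precisely your specialization $w\equiv 1$. The two checks you spell out — that $1\in A_{p,q}$ with constant $1$, and that the weighted Morrey norms collapse to the unweighted ones because $w^q(B)=|B|$ — are the right things to verify and complete the argument.
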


\begin{corollary}\label{cor:6}
Let $0<\alpha<n$, $p=1$ and $q=n/{(n-\alpha)}$. Assume that $\theta$ satisfies the $\mathcal D_\kappa$ condition $(\ref{D condition})$ with $0\leq\kappa<1/q$, then the fractional integral operator $I_{\alpha}$ is bounded from $\mathcal M^{1,\theta}(\mathbb R^n)$ into $W\mathcal M^{q,\theta^q}(\mathbb R^n)$.
\end{corollary}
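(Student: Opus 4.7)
The plan is to derive Corollary~\ref{cor:6} as an immediate specialization of Theorem~\ref{mainthm:2} to the constant weight $w(x)\equiv 1$. First I would check that $w\equiv 1$ lies in the class $A_{1,q}$: the two factors in the definition both reduce to $1$, so the defining inequality holds trivially with constant $1$. The hypotheses on $\theta$ in the corollary coincide exactly with those in Theorem~\ref{mainthm:2} (the $\mathcal D_\kappa$ condition with $0\le\kappa<1/q$), so nothing needs adjustment on that side.

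Next I would identify the norms. With $w\equiv 1$, one has $w^p\equiv w\equiv 1$, $w^q\equiv 1$, and $w(B)=w^q(B)=|B|$ for every ball $B\subset\mathbb R^n$. Plugging these values into the definitions of the source and target spaces in Theorem~\ref{mainthm:2} yields
\begin{equation*}
\|f\|_{\mathcal M^{1,\theta}(w,w^q)}
=\sup_B\frac{1}{\theta(|B|)}\int_B|f(x)|\,dx
=\|f\|_{\mathcal M^{1,\theta}(\mathbb R^n)},
\end{equation*}
and, since $\theta^q(|B|)^{1/q}=\theta(|B|)$,
\begin{equation*}
\|I_\alpha f\|_{W\mathcal M^{q,\theta^q}(w^q)}
=\sup_B\sup_{\sigma>0}\frac{\sigma}{\theta(|B|)}
\bigl|\{x\in B:|I_\alpha f(x)|>\sigma\}\bigr|^{1/q}
=\|I_\alpha f\|_{W\mathcal M^{q,\theta^q}(\mathbb R^n)}.
\end{equation*}
Thus the weak type bound furnished by Theorem~\ref{mainthm:2} translates verbatim into the weak type bound asserted in Corollary~\ref{cor:6}.

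Because this is a pure specialization, there is no genuine technical obstacle: the entire analytic content has already been carried by Theorem~\ref{mainthm:2}. The only care needed is in the bookkeeping, namely verifying membership in $A_{1,q}$ for the constant weight and checking that both the strong-type source norm and the weak-type target norm collapse correctly to their unweighted counterparts under the choice $w\equiv 1$.
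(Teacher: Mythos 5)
Your proposal is correct and matches the paper's approach exactly: the paper obtains Corollary~\ref{cor:6} by simply setting $u=v\equiv 1$ in Theorem~\ref{mainthm:2}, which is precisely the specialization $w\equiv 1$ you carry out, and your bookkeeping (that $w\equiv 1\in A_{1,q}$, that $w(B)=w^q(B)=|B|$, and that $(\theta^q(|B|))^{1/q}=\theta(|B|)$) is accurate. The paper states this reduction without spelling out the verifications, so your write-up is slightly more detailed but in no way different in substance.
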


\begin{corollary}\label{cor:7}
Let $0<\alpha<n$, $1<p<n/{\alpha}$ and $1/q=1/p-{\alpha}/n$. Assume that $\theta$ satisfies the $\mathcal D_\kappa$ condition $(\ref{D condition})$ with $0\leq\kappa< p/q$ and $b\in BMO(\mathbb R^n)$, then the commutator operator $[b,I_{\alpha}]$ is bounded from $\mathcal M^{p,\theta}(\mathbb R^n)$ into $\mathcal M^{q,\theta^{q/p}}(\mathbb R^n)$.
\end{corollary}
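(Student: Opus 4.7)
The plan is to derive this unweighted statement as an immediate specialization of Theorem \ref{mainthm:3} to the constant weight $w(x) \equiv 1$. The first step is to verify that $1 \in A_{p,q}$ for every $1 < p < n/\alpha$ with $1/q = 1/p - \alpha/n$: the defining product of averages is identically equal to $1$ on every ball, so the hypotheses of Theorem \ref{mainthm:3} are met. Since $b \in BMO(\mathbb R^n)$ and $\theta$ satisfies the $\mathcal D_\kappa$ condition with $0 \leq \kappa < p/q$ by assumption, Theorem \ref{mainthm:3} applies.

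Under this choice we have $w^p \equiv w^q \equiv 1$, so that $w^p(B) = w^q(B) = |B|$ for every ball $B \subset \mathbb R^n$. Comparing with the unweighted definitions of $\mathcal M^{p,\theta}(\mathbb R^n)$ recorded just before the corollary, the two weighted Morrey norms appearing in the conclusion of Theorem \ref{mainthm:3} collapse to their unweighted counterparts:
\begin{equation*}
\|f\|_{\mathcal M^{p,\theta}(w^p,w^q)} = \|f\|_{\mathcal M^{p,\theta}(\mathbb R^n)},
\qquad
\|f\|_{\mathcal M^{q,\theta^{q/p}}(w^q)} = \|f\|_{\mathcal M^{q,\theta^{q/p}}(\mathbb R^n)}.
\end{equation*}
Substituting these identifications into the inequality furnished by Theorem \ref{mainthm:3} yields exactly the boundedness of $[b,I_\alpha]$ from $\mathcal M^{p,\theta}(\mathbb R^n)$ into $\mathcal M^{q,\theta^{q/p}}(\mathbb R^n)$ claimed in Corollary \ref{cor:7}.

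There is essentially no obstacle to overcome in this corollary itself. All of the analytic work -- splitting $f$ into a local piece and annular far pieces relative to each ball, applying the generalized H\"older inequality \eqref{three} for the Young pair $\Phi(t) = t(1+\log^+ t)$ and $\bar\Phi(t) \approx e^t - 1$ to absorb the $BMO$ oscillation of $b$, and invoking the $\mathcal D_\kappa$ condition on $\theta$ together with standard $A_\infty$ reverse-doubling estimates of the form \eqref{compare} to sum the tail contributions -- has already been carried out inside the proof of Theorem \ref{mainthm:3}. The sole content of Corollary \ref{cor:7} is to record what that weighted theorem specializes to when no weight is present, and the steps above make this specialization explicit.
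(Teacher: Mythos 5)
Your proof is correct and is exactly the approach the paper takes: the paper disposes of Corollaries \ref{cor:5}--\ref{cor:8} in one line by setting $u=v\equiv1$ (equivalently $w\equiv1$, which indeed lies in $A_{p,q}$ since the defining averages are all $1$), and your verification that $w^p(B)=w^q(B)=|B|$ so that the two weighted Morrey norms in Theorem \ref{mainthm:3} collapse to their unweighted counterparts is precisely this specialization made explicit. One small descriptive slip in your closing paragraph: the internal proof of Theorem \ref{mainthm:3} controls the $BMO$ oscillation of $b$ via Lemma \ref{BMO} together with ordinary H\"older's inequality in the dual exponent $p'$, not via the generalized H\"older inequality \eqref{three} for the Young pair $\Phi,\bar\Phi$ --- that Orlicz machinery belongs to the endpoint Theorem \ref{mainthm:4} --- but since your corollary argument treats Theorem \ref{mainthm:3} as a black box this has no bearing on its correctness.
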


\begin{corollary}\label{cor:8}
Let $0<\alpha<n$, $p=1$ and $q=n/{(n-\alpha)}$. Assume that $\theta$ satisfies the $\mathcal D_\kappa$ condition $(\ref{D condition})$ with $0\leq\kappa<1/q$ and $b\in BMO(\mathbb R^n)$, then for any given $\sigma>0$ and any ball $B\subset\mathbb R^n$, there exists a constant $C>0$ independent of $f$, $B$ and $\sigma>0$ such that
\begin{equation*}
\begin{split}
&\frac{1}{\theta(|B|)}\Big|\big\{x\in B:\big|[b,I_{\alpha}](f)(x)\big|>\sigma\big\}\Big|^{1/q}
\leq C\cdot\bigg\|\Phi\left(\frac{|f|}{\,\sigma\,}\right)\bigg\|_{\mathcal M^{1,\theta}_{L\log L}(\mathbb R^n)},
\end{split}
\end{equation*}
where $\Phi(t)=t\cdot(1+\log^+t)$.
\end{corollary}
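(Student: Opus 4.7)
The plan is to derive Corollary \ref{cor:8} as the direct specialisation of Theorem \ref{mainthm:4} to the constant weight $w\equiv 1$. Since Theorem \ref{mainthm:4} is already proved (and I am free to invoke it), the task reduces to verifying that every weighted object appearing there reduces to its unweighted analogue under this choice of weight.

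First I would check the hypothesis $w\in A_{1,q}$ for $w(x)\equiv 1$. Both $\bigl(\frac{1}{|B|}\int_B w^q\,dx\bigr)^{1/q}$ and $\operatorname{ess\,sup}_{x\in B} w(x)^{-1}$ equal $1$, so the defining inequality for $A_{1,q}$ is satisfied with constant $1$; hence $w\equiv 1 \in A_{1,q}$ for every $1<q<\infty$ and Theorem \ref{mainthm:4} is applicable.

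Next I would unwind the definitions so that the conclusion of Theorem \ref{mainthm:4} becomes literally that of Corollary \ref{cor:8}. On the left-hand side, $w^q(E)=\int_E 1\,dx=|E|$ for every measurable set $E$, so $\theta(w^q(B))=\theta(|B|)$ and the weighted measure of the level set coincides with its Lebesgue measure. On the right-hand side, the weighted Luxemburg norm $\|g\|_{L\log L(v),B}$ with $v\equiv 1$ reduces to the classical Luxemburg norm $\|g\|_{L\log L,B}$, because the defining condition $\frac{1}{v(B)}\int_B\Phi(|g|/\lambda)\,v\,dx\le 1$ becomes $\frac{1}{|B|}\int_B\Phi(|g|/\lambda)\,dx\le 1$. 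Consequently,
\begin{equation*}
\big\|f\big\|_{\mathcal M^{1,\theta}_{L\log L}(w,w^q)}\Big|_{w\equiv 1}
=\sup_B\left\{\frac{|B|}{\theta(|B|)}\big\|f\big\|_{L\log L,B}\right\}
=\big\|f\big\|_{\mathcal M^{1,\theta}_{L\log L}(\mathbb R^n)}.
\end{equation*}

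With these identifications in hand, substituting $w\equiv 1$ into the conclusion of Theorem \ref{mainthm:4} produces exactly the inequality claimed in Corollary \ref{cor:8}, under the identical hypothesis $0\le\kappa<1/q$, with a constant $C$ that, tracing through the proof of Theorem \ref{mainthm:4}, depends only on $n$, $\alpha$, $\|b\|_*$ and $\theta$, and is independent of $f$, $B$, and $\sigma$. There is therefore no real obstacle: the corollary is fully encoded in Theorem \ref{mainthm:4}, and the only care required is to verify that each weighted quantity specialises correctly to its unweighted counterpart when the weight is taken to be constant.
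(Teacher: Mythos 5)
Your proposal is correct and matches the paper's own route: the paper obtains Corollary \ref{cor:8} precisely by specializing Theorem \ref{mainthm:4} to $u=v\equiv 1$ (i.e.\ $w\equiv 1$), noting that the weighted quantities collapse to their unweighted counterparts. Your verification that $w\equiv1\in A_{1,q}$, that $w^q(E)=|E|$, and that the weighted Luxemburg and $\mathcal M^{1,\theta}_{L\log L}$ norms reduce to the unweighted ones is exactly the bookkeeping the paper leaves implicit.
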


We also introduce the generalized Morrey space of $L\log L$ type.

\begin{defn}
Let $p=1$ and $\Theta$ be a growth function on $(0,+\infty)$. We denote by $\mathcal L^{1,\Theta}_{L\log L}(\mathbb R^n)$ the generalized Morrey space of $L\log L$ type, which is given by
\begin{equation*}
\mathcal L^{1,\Theta}_{L\log L}(\mathbb R^n):=\left\{f\in L^1_{loc}(\mathbb R^n):
\big\|f\big\|_{\mathcal L^{1,\Theta}_{L\log L}(\mathbb R^n)}<\infty\right\},
\end{equation*}
where
\begin{equation*}
\big\|f\big\|_{\mathcal L^{1,\Theta}_{L\log L}(\mathbb R^n)}
:=\sup_{r>0;B(x_0,r)}\left\{\frac{|B(x_0,r)|}{\Theta(r)}\cdot\big\|f\big\|_{L\log L,B(x_0,r)}\right\}.
\end{equation*}
In this situation, we also have $\mathcal L^{1,\Theta}_{L\log L}(\mathbb R^n)\subset\mathcal L^{1,\Theta}(\mathbb R^n)$.
\end{defn}

Below we are going to show that our new Morrey type spaces can be reduced to generalized Morrey spaces. In fact, assume that $\theta(\cdot)$ is a positive increasing function defined in $(0,+\infty)$ and satisfies the $\mathcal D_\kappa$ condition $(\ref{D condition})$ with some $0\leq\kappa<1$. For any fixed $x_0\in\mathbb R^n$ and $r>0$, we set $\Theta(r):=\theta(|B(x_0,r)|)$. Observe that
\begin{equation*}
\Theta(2r)=\theta\big(|B(x_0,2r)|\big)=\theta\big(2^n|B(x_0,r)|\big).
\end{equation*}
Then it is easy to verify that $\Theta(r)$, $r>0$, is a growth function with doubling constant $D(\Theta):1\le D(\Theta)<2^n$. Hence, by the choice of $\Theta$ mentioned above, we get $\mathcal M^{p,\theta}(\mathbb R^n)=\mathcal L^{p,\Theta}(\mathbb R^n)$ and $W\mathcal M^{p,\theta}(\mathbb R^n)=W\mathcal L^{p,\Theta}(\mathbb R^n)$ for $p\in[1,+\infty)$, and $\mathcal M^{1,\theta}_{L\log L}(\mathbb R^n)=\mathcal L^{1,\Theta}_{L\log L}(\mathbb R^n)$.
Therefore, by the above unweighted results (Corollaries \ref{cor:5}--\ref{cor:8}), we can also obtain strong type estimate and endpoint estimate of $I_{\alpha}$ and $[b,I_{\alpha}]$ in the generalized Morrey spaces.

\begin{corollary}\label{cor:9}
Let $0<\alpha<n$, $1<p<n/{\alpha}$ and $1/q=1/p-{\alpha}/n$. Suppose that $\Theta$ satisfies the doubling condition $(\ref{doubling})$ and $1\le D(\Theta)<2^{{np}/q}$, then the fractional integral operator $I_{\alpha}$ is bounded from $\mathcal L^{p,\Theta}(\mathbb R^n)$ into $\mathcal L^{q,\Theta^{q/p}}(\mathbb R^n)$.
\end{corollary}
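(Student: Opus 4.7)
The strategy is to reduce the statement to Corollary \ref{cor:5} via the identification between generalized Morrey spaces and the $\theta$-Morrey spaces indicated in the paragraph preceding this corollary. I would first define $\theta:(0,+\infty)\to(0,+\infty)$ by $\theta(t):=\Theta\bigl((t/v_n)^{1/n}\bigr)$, where $v_n:=|B(0,1)|$, so that $\theta\bigl(|B(x_0,r)|\bigr)=\Theta(r)$ for every ball $B(x_0,r)\subset\mathbb R^n$. Since $\Theta$ is positive and increasing, so is $\theta$, and with this choice the norms of $\mathcal L^{p,\Theta}(\mathbb R^n)$ and $\mathcal M^{p,\theta}(\mathbb R^n)$ coincide; likewise $\mathcal L^{q,\Theta^{q/p}}(\mathbb R^n)=\mathcal M^{q,\theta^{q/p}}(\mathbb R^n)$, because $\theta^{q/p}\bigl(|B(x_0,r)|\bigr)=\Theta(r)^{q/p}$.

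The crucial step is to check that this $\theta$ satisfies the $\mathcal D_\kappa$ condition (\ref{D condition}) for some $\kappa\in[0,p/q)$. Since $D(\Theta)<2^{np/q}$ by hypothesis, I can select $\kappa\in[0,p/q)$ with $D(\Theta)\le 2^{n\kappa}$ (for instance, any $\kappa$ strictly between $\max\{0,(\log_2 D(\Theta))/n\}$ and $p/q$). Given $0<\xi'<\xi$, let $k\ge 0$ be the unique integer with $2^{kn}\xi'\le\xi<2^{(k+1)n}\xi'$. The doubling bound $\Theta(2r)\le D(\Theta)\Theta(r)$ translates directly to $\theta(2^n t)\le D(\Theta)\theta(t)$, and iterating yields
\[
\theta(\xi)\le\theta\bigl(2^{(k+1)n}\xi'\bigr)\le D(\Theta)^{k+1}\theta(\xi')\le D(\Theta)\cdot 2^{kn\kappa}\theta(\xi')\le D(\Theta)\cdot(\xi/\xi')^{\kappa}\theta(\xi'),
\]
which rearranges to $\theta(\xi)/\xi^{\kappa}\le D(\Theta)\cdot\theta(\xi')/(\xi')^{\kappa}$, establishing (\ref{D condition}) with the required range of $\kappa$.

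With $\theta$ satisfying the $\mathcal D_\kappa$ condition for some $\kappa\in[0,p/q)$, Corollary \ref{cor:5} immediately gives that $I_\alpha$ is bounded from $\mathcal M^{p,\theta}(\mathbb R^n)$ into $\mathcal M^{q,\theta^{q/p}}(\mathbb R^n)$; transferring this through the isometric identifications from the first step yields the desired boundedness. The main (and essentially only) nontrivial point is the quantitative translation of the sharp doubling hypothesis $D(\Theta)<2^{np/q}$ into the $\mathcal D_\kappa$ condition with a $\kappa$ strictly below $p/q$; once this conversion is in hand, the result is a direct consequence of Corollary \ref{cor:5}.
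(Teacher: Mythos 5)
Your proof is correct and takes essentially the same route as the paper: reduce to Corollary \ref{cor:5} via the identification $\mathcal L^{p,\Theta}(\mathbb R^n)=\mathcal M^{p,\theta}(\mathbb R^n)$. You make explicit the step the paper leaves implicit (the paper only spells out the $\theta\to\Theta$ direction, not the $\Theta\to\theta$ direction needed here), and your iteration of $\theta(2^nt)\le D(\Theta)\theta(t)$ to verify the $\mathcal D_\kappa$ condition with $\kappa<p/q$ under the hypothesis $D(\Theta)<2^{np/q}$ is correct.
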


\begin{corollary}\label{cor:10}
Let $0<\alpha<n$, $p=1$ and $q=n/{(n-\alpha)}$. Suppose that $\Theta$ satisfies the doubling condition $(\ref{doubling})$ and $1\le D(\Theta)<2^{n/q}$, then the fractional integral operator $I_{\alpha}$ is bounded from $\mathcal L^{1,\Theta}(\mathbb R^n)$ into $W\mathcal L^{q,\Theta^{q}}(\mathbb R^n)$.
\end{corollary}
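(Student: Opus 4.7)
The plan is to deduce this corollary from the unweighted generalized weak-type estimate Corollary \ref{cor:6}, by constructing an auxiliary function $\theta$ on $(0,+\infty)$ whose associated Morrey type spaces coincide with $\mathcal L^{1,\Theta}(\mathbb R^n)$ and $W\mathcal L^{q,\Theta^q}(\mathbb R^n)$, and then verifying that this $\theta$ satisfies the $\mathcal D_\kappa$ condition $(\ref{D condition})$ for some $\kappa$ with $0\le\kappa<1/q$.

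Writing $c_n:=|B(0,1)|$ so that $|B(x_0,r)|=c_n r^n$ for every ball, I would define $\theta(s):=\Theta((s/c_n)^{1/n})$ for $s>0$. Since $\Theta$ is positive and increasing, so is $\theta$, and by construction $\theta(|B(x_0,r)|)=\Theta(r)$ for all $x_0\in\mathbb R^n$ and $r>0$. As already observed in the discussion preceding the corollary, this identity, together with $\theta(|B(x_0,r)|)^q=\Theta(r)^q$, yields the norm-preserving identifications $\mathcal M^{1,\theta}(\mathbb R^n)=\mathcal L^{1,\Theta}(\mathbb R^n)$ and $W\mathcal M^{q,\theta^q}(\mathbb R^n)=W\mathcal L^{q,\Theta^q}(\mathbb R^n)$.

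The main step will be to show that the hypothesis $1\le D(\Theta)<2^{n/q}$ forces $\theta$ to satisfy $(\ref{D condition})$ for some admissible $\kappa$. Since $D(\Theta)<2^{n/q}$, the number $(\log_2 D(\Theta))/n$ lies strictly below $1/q$, so I can fix $\kappa$ with $(\log_2 D(\Theta))/n\le\kappa<1/q$. For any $0<\xi'<\xi<\infty$, I write $\xi=c_n r^n$ and $\xi'=c_n(r')^n$ with $r'<r$, and let $k\ge1$ be the smallest integer with $r\le 2^k r'$. Iterating the doubling inequality $\Theta(2r)\le D(\Theta)\Theta(r)$ and using monotonicity of $\Theta$ yields $\Theta(r)\le D(\Theta)^k\Theta(r')$, while $2^{k-1}<r/r'$ implies $D(\Theta)^{k-1}<(r/r')^{\log_2 D(\Theta)}\le(r/r')^{n\kappa}$. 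Combining these bounds and cancelling the common $c_n^\kappa$ factor, I obtain $\theta(\xi)/\xi^\kappa\le D(\Theta)\cdot\theta(\xi')/(\xi')^\kappa$, which is precisely $(\ref{D condition})$ with exponent $\kappa$.

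With both ingredients in place, the proof will conclude by applying Corollary \ref{cor:6} to the pair $(\theta,\kappa)$ just constructed and rewriting the resulting weak-type estimate via the identifications above to land in $W\mathcal L^{q,\Theta^q}(\mathbb R^n)$. The only substantive obstacle is the $\mathcal D_\kappa$ verification of the previous paragraph, where the strict bound $D(\Theta)<2^{n/q}$ is essential in order to ensure $\kappa<1/q$ and thus the applicability of Corollary \ref{cor:6}.
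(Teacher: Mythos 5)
Your proof is correct and follows the paper's intended route of reducing the corollary to the unweighted weak-type estimate of Corollary \ref{cor:6} via the $\theta$--$\Theta$ correspondence. You have merely made explicit what the paper leaves implicit, namely the construction $\theta(s)=\Theta\big((s/c_n)^{1/n}\big)$ and the verification that the hypothesis $D(\Theta)<2^{n/q}$ forces $\theta$ to satisfy the $\mathcal D_\kappa$ condition for some $\kappa<1/q$.
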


\begin{corollary}\label{cor:11}
Let $0<\alpha<n$, $1<p<n/{\alpha}$ and $1/q=1/p-{\alpha}/n$. Suppose that $\Theta$ satisfies the doubling condition $(\ref{doubling})$ with $1\le D(\Theta)<2^{{np}/q}$ and $b\in BMO(\mathbb R^n)$, then the commutator operator $[b,I_{\alpha}]$ is bounded from $\mathcal L^{p,\Theta}(\mathbb R^n)$ into $\mathcal L^{q,\Theta^{q/p}}(\mathbb R^n)$.
\end{corollary}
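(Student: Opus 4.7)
The plan is to reduce Corollary \ref{cor:11} to the unweighted Morrey type space result, Corollary \ref{cor:7}, by converting the growth function $\Theta$ into a companion function $\theta$ on $(0,+\infty)$ satisfying the $\mathcal D_\kappa$ condition for some $\kappa<p/q$.

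First I would set $\theta(s):=\Theta\bigl((s/c_n)^{1/n}\bigr)$, where $c_n=|B(0,1)|$, so that $\theta(|B(x_0,r)|)=\Theta(r)$ for every ball $B(x_0,r)\subset\mathbb R^n$. Since the hypothesis $1\le D(\Theta)<2^{np/q}$ allows me to choose $\kappa$ with $\tfrac{\log_2 D(\Theta)}{n}<\kappa<\tfrac{p}{q}$, i.e., $D(\Theta)<2^{n\kappa}$, I can then verify the $\mathcal D_\kappa$ condition for this $\theta$: given $0<\xi'<\xi$, write $\xi=c_nr^n$, $\xi'=c_n(r')^n$, pick $j\ge 0$ with $2^jr'\le r<2^{j+1}r'$, and iterate the doubling inequality to obtain $\Theta(r)\le D(\Theta)^{j+1}\Theta(r')$. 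Combined with the elementary bound $r^{n\kappa}\ge 2^{jn\kappa}(r')^{n\kappa}$, this yields
\[
\frac{\theta(\xi)}{\xi^\kappa}\le D(\Theta)\left(\frac{D(\Theta)}{2^{n\kappa}}\right)^{j}\frac{\theta(\xi')}{(\xi')^\kappa}\le D(\Theta)\cdot\frac{\theta(\xi')}{(\xi')^\kappa},
\]
so $\theta$ satisfies $\mathcal D_\kappa$ with $0\le\kappa<p/q$.

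Finally, the identity $\theta(|B(x_0,r)|)=\Theta(r)$, together with its $q/p$-th power $\theta^{q/p}(|B(x_0,r)|)=\Theta(r)^{q/p}$, translates directly into the norm identifications $\mathcal M^{p,\theta}(\mathbb R^n)=\mathcal L^{p,\Theta}(\mathbb R^n)$ and $\mathcal M^{q,\theta^{q/p}}(\mathbb R^n)=\mathcal L^{q,\Theta^{q/p}}(\mathbb R^n)$. Applying Corollary \ref{cor:7} to $[b,I_\alpha]$ with this $(\theta,\kappa)$ then yields the desired boundedness. The only nontrivial step is the precise selection of $\kappa$; the assumption $D(\Theta)<2^{np/q}$ is exactly what is needed to wedge $\kappa$ strictly between $\log_2 D(\Theta)/n$ and $p/q$, and this numerical interplay is the main (and essentially only) obstacle, the remainder being bookkeeping between the $\Theta$-parametrized and $\theta$-parametrized formalisms.
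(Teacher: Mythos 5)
Your argument is correct and follows the same reduction strategy the paper uses: identify $\mathcal L^{p,\Theta}(\mathbb R^n)$ with $\mathcal M^{p,\theta}(\mathbb R^n)$ and invoke Corollary \ref{cor:7}. Where the paper only sketches the correspondence (going from $\theta$ to $\Theta$ and asserting the doubling constant satisfies $1\le D(\Theta)<2^n$), you explicitly run the reverse direction, which is the direction actually needed for Corollary \ref{cor:11}: starting from a given $\Theta$ with $D(\Theta)<2^{np/q}$, you construct $\theta(s)=\Theta\bigl((s/c_n)^{1/n}\bigr)$, choose $\kappa$ with $\log_2 D(\Theta)/n<\kappa<p/q$, and verify the $\mathcal D_\kappa$ condition by iterating the doubling inequality so that $D(\Theta)/2^{n\kappa}\le 1$ absorbs the geometric factor. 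This is the step the paper leaves implicit, and your proposal supplies it cleanly; the rest, as you say, is bookkeeping between the two parametrizations.
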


\begin{corollary}\label{cor:12}
Let $0<\alpha<n$, $p=1$ and $q=n/{(n-\alpha)}$. Suppose that $\Theta$ satisfies the doubling condition $(\ref{doubling})$ with $1\le D(\Theta)<2^{n/q}$ and $b\in BMO(\mathbb R^n)$, then for any given $\sigma>0$ and any ball $B(x_0,r)\subset\mathbb R^n$, there exists a constant $C>0$ independent of $f$, $B(x_0,r)$ and $\sigma>0$ such that
\begin{equation*}
\begin{split}
&\frac{1}{\Theta(r)}\Big|\big\{x\in B(x_0,r):\big|[b,I_{\alpha}](f)(x)\big|>\sigma\big\}\Big|^{1/q}
\leq C\cdot\bigg\|\Phi\left(\frac{|f|}{\,\sigma\,}\right)\bigg\|_{\mathcal L^{1,\Theta}_{L\log L}(\mathbb R^n)},
\end{split}
\end{equation*}
where $\Phi(t)=t\cdot(1+\log^+t)$.
\end{corollary}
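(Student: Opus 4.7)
The plan is to obtain Corollary \ref{cor:12} as a direct consequence of the unweighted endpoint estimate in Corollary \ref{cor:8}, using the correspondence $\theta\leftrightarrow\Theta$ indicated in the paragraph preceding Corollary \ref{cor:9}. Once $\theta$ is chosen so that $\theta(|B(x_0,r)|)=\Theta(r)$ for every ball, the generalized Morrey spaces $\mathcal L^{1,\Theta}(\mathbb R^n)$ and $\mathcal L^{1,\Theta}_{L\log L}(\mathbb R^n)$ coincide as normed spaces with $\mathcal M^{1,\theta}(\mathbb R^n)$ and $\mathcal M^{1,\theta}_{L\log L}(\mathbb R^n)$, so Corollary \ref{cor:8} applied to such a $\theta$ will read exactly as the desired inequality.

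First I would define $\theta(s):=\Theta\bigl((s/c_n)^{1/n}\bigr)$ for $s>0$, where $c_n=|B(0,1)|$, so that $\theta(|B(x_0,r)|)=\Theta(r)$ for every $x_0\in\mathbb R^n$ and every $r>0$; this $\theta$ is positive and increasing because $\Theta$ is.

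Next I would verify the $\mathcal D_\kappa$ condition (\ref{D condition}) for this $\theta$ with some $\kappa\in[0,1/q)$. For $0<\xi'<\xi$ set $r:=(\xi/c_n)^{1/n}$ and $r':=(\xi'/c_n)^{1/n}$, and pick the unique integer $k\geq 1$ with $2^{k-1}r'\leq r<2^k r'$. Iteration of $\Theta(2t)\leq D\cdot\Theta(t)$ yields $\Theta(r)\leq D^k\Theta(r')$, while $r\geq 2^{k-1}r'$ gives $(r'/r)^{n\kappa}\leq 2^{-(k-1)n\kappa}$, so
\begin{equation*}
\frac{\theta(\xi)/\xi^\kappa}{\theta(\xi')/(\xi')^\kappa}
=\frac{\Theta(r)}{\Theta(r')}\cdot\left(\frac{r'}{r}\right)^{\!n\kappa}
\leq D\cdot\Bigl(\tfrac{D}{2^{n\kappa}}\Bigr)^{k-1}.
\end{equation*}
Because $D=D(\Theta)<2^{n/q}$ we have $(\log_2 D)/n<1/q$, so any $\kappa\in[(\log_2 D)/n,\,1/q)$ makes the right-hand side uniformly bounded in $k$; for such $\kappa$ the $\mathcal D_\kappa$ condition is verified with a constant depending only on $D$, $\kappa$ and $n$.

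Finally, since $\theta(|B(x_0,r)|)=\Theta(r)$ and $\|f\|_{\mathcal M^{1,\theta}_{L\log L}(\mathbb R^n)}=\|f\|_{\mathcal L^{1,\Theta}_{L\log L}(\mathbb R^n)}$, applying Corollary \ref{cor:8} with this $\theta$ on the ball $B(x_0,r)$ gives exactly the inequality claimed in Corollary \ref{cor:12}. The only non-routine step is the verification of the $\mathcal D_\kappa$ condition; the quantitative hypothesis $D(\Theta)<2^{n/q}$ is precisely what makes an admissible $\kappa<1/q$ available, so it is essential to this approach and cannot be weakened without strengthening Corollary \ref{cor:8}.
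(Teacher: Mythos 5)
Your proposal is correct and follows essentially the route the paper intends: it realizes the given $\Theta$ as a $\theta$ via $\theta(s)=\Theta\bigl((s/c_n)^{1/n}\bigr)$ so that $\theta(|B(x_0,r)|)=\Theta(r)$, identifies the corresponding Morrey type norms, and then applies Corollary \ref{cor:8}. The paper's preamble to Corollary \ref{cor:9} only sketches the space identification in the opposite direction ($\theta\mapsto\Theta$) and omits the quantitative verification you supply, namely that the hypothesis $D(\Theta)<2^{n/q}$ guarantees the $\mathcal D_\kappa$ condition \eqref{D condition} for some admissible $\kappa\in[(\log_2 D)/n,\,1/q)$.
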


We will also prove the following result which can be regarded as a supplement of Corollaries \ref{cor:9} and \ref{cor:10}.

\begin{theorem}\label{mainthm:end2}
Let $0<\alpha<n$, $1<p<n/{\alpha}$ and $1/q=1/p-{\alpha}/n$. Suppose that $\Theta$ satisfies the following condition:
\begin{equation}\label{assump}
\Theta(r)\leq C\cdot r^{{np}/q}, \qquad \mbox{for all }\,r>0,
\end{equation}
where $C=C(\Theta)>0$ is a universal constant independent of $r$. Then the fractional integral operator $I_{\alpha}$ is bounded from $\mathcal L^{p,\Theta}(\mathbb R^n)$ into $BMO(\mathbb R^n)$.
\end{theorem}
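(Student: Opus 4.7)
The plan is to fix an arbitrary ball $B=B(x_0,r)$ and establish a uniform mean-oscillation bound
\begin{equation*}
\frac{1}{|B|}\int_B \big|I_\alpha f(x)-c_B\big|\,dx\ \leq\ C\,\big\|f\big\|_{\mathcal L^{p,\Theta}(\mathbb R^n)}
\end{equation*}
for a suitable constant $c_B$ depending on $B$; such an estimate controls $\|I_\alpha f\|_{BMO(\mathbb R^n)}$ up to a factor of $2$. Following the standard Morrey-type strategy, I split $f=f_1+f_2$ with $f_1:=f\chi_{2B}$ and $f_2:=f\chi_{(2B)^c}$, choose $c_B:=I_\alpha f_2(x_0)$, and by the triangle inequality reduce the problem to bounding the $B$-average of $|I_\alpha f_1|$ and the pointwise difference $|I_\alpha f_2(x)-I_\alpha f_2(x_0)|$ for $x\in B$.

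For the local part, the unweighted Hardy--Littlewood--Sobolev inequality (Theorem~\ref{strong} with $w\equiv 1$) gives $\|I_\alpha f_1\|_{L^q(\mathbb R^n)}\lesssim\|f_1\|_{L^p(\mathbb R^n)}$, and H\"older's inequality on $B$ yields
\begin{equation*}
\frac{1}{|B|}\int_B\big|I_\alpha f_1(x)\big|\,dx\ \leq\ |B|^{-1/q}\big\|I_\alpha f_1\big\|_{L^q(\mathbb R^n)}\ \lesssim\ r^{-n/q}\,\Theta(2r)^{1/p}\,\big\|f\big\|_{\mathcal L^{p,\Theta}(\mathbb R^n)}.
\end{equation*}
The growth condition $\Theta(r)\leq Cr^{np/q}$ then forces $\Theta(2r)^{1/p}\leq Cr^{n/q}$, which cancels the factor $r^{-n/q}$ and produces the desired bound $C\|f\|_{\mathcal L^{p,\Theta}(\mathbb R^n)}$.

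For the non-local part, a mean value estimate on the Riesz kernel gives, for $x\in B$ and $y\in(2B)^c$,
\begin{equation*}
\Big|\,|x-y|^{\alpha-n}-|x_0-y|^{\alpha-n}\,\Big|\ \lesssim\ \frac{r}{|x_0-y|^{n-\alpha+1}}.
\end{equation*}
Decomposing $(2B)^c=\bigcup_{k\geq 1}(2^{k+1}B\setminus 2^kB)$ and noting that $|x_0-y|\approx 2^kr$ on the $k$-th annulus, I apply H\"older's inequality and the Morrey norm definition together with the identity $n/p'+n/q=n-\alpha$ (which follows from $1/q=1/p-\alpha/n$) and the hypothesis $\Theta(2^{k+1}r)\leq C(2^{k+1}r)^{np/q}$ to obtain
\begin{equation*}
\int_{2^{k+1}B}|f(y)|\,dy\ \lesssim\ (2^kr)^{n-\alpha}\,\big\|f\big\|_{\mathcal L^{p,\Theta}(\mathbb R^n)}.
\end{equation*}
Multiplying by $r\,(2^kr)^{-(n-\alpha+1)}$ and summing the resulting geometric series in $k$ yields $|I_\alpha f_2(x)-I_\alpha f_2(x_0)|\lesssim \|f\|_{\mathcal L^{p,\Theta}(\mathbb R^n)}$ uniformly in $x\in B$.

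The main obstacle is that, under the borderline hypothesis $\Theta(r)\leq Cr^{np/q}$, the absolute integral $\int_{(2B)^c}|f(y)|\,|x_0-y|^{\alpha-n}\,dy$ can genuinely diverge, so the \emph{a priori} choice $c_B=I_\alpha f_2(x_0)$ is not automatically meaningful as a single number. The resolution, standard in the theory of Riesz potentials on borderline Morrey spaces, is to interpret $I_\alpha f$ modulo constants via a centered kernel such as
\begin{equation*}
K(x,y):=\frac{1}{|x-y|^{n-\alpha}}-\frac{\chi_{\{|y|\geq 1\}}(y)}{|y|^{n-\alpha}},
\end{equation*}
so that $\int_{\mathbb R^n}K(x,y)f(y)\,dy$ converges absolutely for $f\in\mathcal L^{p,\Theta}(\mathbb R^n)$ and differs from $I_\alpha f$ by a global constant, which is immaterial for the BMO norm. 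Only the differences $I_\alpha f_2(x)-I_\alpha f_2(x_0)$ (which, by the non-local estimate above, always converge) enter the mean-oscillation bound, so once this technicality is properly set up, the local and non-local pieces combine to give the required uniform estimate.
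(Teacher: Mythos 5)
Your proposal is correct and follows essentially the same route as the paper: split $f$ into a local piece and a tail, control the mean oscillation of $I_\alpha$ on the local piece via the unweighted Hardy--Littlewood--Sobolev inequality together with H\"older's inequality and the growth hypothesis $\Theta(r)\leq Cr^{np/q}$, and control the tail via a kernel-smoothness (mean value) estimate on dyadic annuli. The only cosmetic differences are that the paper cuts at $4B$ rather than $2B$ and centers at $(I_\alpha f)_B$ rather than $I_\alpha f_2(x_0)$; both choices lead to the same computation. Your remark about the possible divergence of $\int_{(2B)^c}|f(y)|\,|x_0-y|^{\alpha-n}\,dy$ under the borderline hypothesis is a genuine technical point that the paper passes over silently: the paper writes $(I_\alpha f_2)_B$ formally, but in the actual estimate only the absolutely convergent differences $I_\alpha f_2(x)-I_\alpha f_2(y)$ appear, which is what makes the argument sound. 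Your modified-kernel interpretation of $I_\alpha f$ modulo constants is a clean and standard way to make the statement rigorous, and is a small but worthwhile refinement over the paper's presentation.
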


It is worth pointing out that Corollaries \ref{cor:9} through \ref{cor:11} were obtained by Nakai in \cite{nakai}. Corollary \ref{cor:12} and Theorem \ref{mainthm:end2} seem to be new, as far as we know.

Throughout this paper, the letter $C$ always denotes a positive constant that is independent of the essential variables but whose value may vary at each occurrence. We also use $A\approx B$ to denote the equivalence of $A$ and $B$; that is, there exist two positive constants $C_1$, $C_2$ independent of quantities $A$ and $B$ such that $C_1 A\leq B\leq C_2 A$. Equivalently, we could define the above notions of this section with cubes in place of balls and we will use whichever is more appropriate, depending on the circumstances.

\section{Proofs of Theorems \ref{mainthm:1} and \ref{mainthm:2}}

\begin{proof}[Proof of Theorem $\ref{mainthm:1}$]
Here and in what follows, for any positive number $\gamma>0$, we denote $f^\gamma(x):=[f(x)]^{\gamma}$ by convention. For example, when $1<p<q<\infty$, we have $[f^{q/p}(x)]^{1/q}=[f(x)]^{1/p}$. Let $f\in\mathcal M^{p,\theta}(w^p,w^q)$ with $1<p,q<\infty$ and $w\in A_{p,q}$. For an arbitrary point $x_0\in\mathbb R^n$, set $B=B(x_0,r_B)$ for the ball centered at $x_0$ and of radius $r_B$, $2B=B(x_0,2r_B)$. We represent $f$ as
\begin{equation*}
f=f\cdot\chi_{2B}+f\cdot\chi_{(2B)^c}:=f_1+f_2;
\end{equation*}
by the linearity of the fractional integral operator $I_{\alpha}$, one can write
\begin{equation*}
\begin{split}
&\frac{1}{\theta(w^q(B))^{1/p}}\bigg(\int_B\big|I_{\alpha}(f)(x)\big|^qw^q(x)\,dx\bigg)^{1/q}\\
\leq\ &\frac{1}{\theta(w^q(B))^{1/p}}\bigg(\int_B\big|I_{\alpha}(f_1)(x)\big|^qw^q(x)\,dx\bigg)^{1/q}\\
&+\frac{1}{\theta(w^q(B))^{1/p}}\bigg(\int_B\big|I_{\alpha}(f_2)(x)\big|^qw^q(x)\,dx\bigg)^{1/q}\\
:=\ &I_1+I_2.
\end{split}
\end{equation*}
Below we will give the estimates of $I_1$ and $I_2$, respectively. By the weighted $(L^p,L^q)$-boundedness of $I_{\alpha}$ (see Theorem \ref{strong}), we have
\begin{equation*}
\begin{split}
I_1&\leq \frac{1}{\theta(w^q(B))^{1/p}}
\big\|I_{\alpha}(f_1)\big\|_{L^q(w^q)}\\
&\leq C\cdot\frac{1}{\theta(w^q(B))^{1/p}}
\bigg(\int_{2B}|f(x)|^p w^p(x)\,dx\bigg)^{1/p}\\
&\leq C\big\|f\big\|_{\mathcal M^{p,\theta}(w^p,w^q)}
\cdot\frac{\theta(w^q(2B))^{1/p}}{\theta(w^q(B))^{1/p}}.
\end{split}
\end{equation*}
Since $w\in A_{p,q}$, we get $w^q\in A_q\subset A_{\infty}$ by Lemma \ref{relation}$(i)$. Moreover, since $0<w^q(B)<w^q(2B)<+\infty$ when $w^q\in A_q$ with $1<q<\infty$, then by the $\mathcal D_\kappa$ condition (\ref{D condition}) of $\theta$ and inequality (\ref{weights}), we obtain
\begin{equation*}
\begin{split}
I_1&\leq C\big\|f\big\|_{\mathcal M^{p,\theta}(w^p,w^q)}\cdot\frac{w^q(2B)^{\kappa/p}}{w^q(B)^{\kappa/p}}\\
&\leq C\big\|f\big\|_{\mathcal M^{p,\theta}(w^p,w^q)}.
\end{split}
\end{equation*}
As for the term $I_2$, it is clear that when $x\in B$ and $y\in(2B)^c$, we get $|x-y|\approx|x_0-y|$. We then decompose $\mathbb R^n$ into a geometrically increasing sequence of concentric balls, and obtain the following pointwise estimate:
\begin{align}\label{pointwise1}
\big|I_{\alpha}(f_2)(x)\big|&\leq\int_{\mathbb R^n}\frac{|f_2(y)|}{|x-y|^{n-\alpha}}dy
\leq C\int_{(2B)^c}\frac{|f(y)|}{|x_0-y|^{n-\alpha}}dy\notag\\
&\leq C\sum_{j=1}^\infty\frac{1}{|2^{j+1}B|^{1-{\alpha}/n}}\int_{2^{j+1}B}|f(y)|\,dy.
\end{align}
From this, it follows that
\begin{equation*}
I_2\leq C\cdot\frac{w^q(B)^{1/q}}{\theta(w^q(B))^{1/p}}\sum_{j=1}^\infty
\frac{1}{|2^{j+1}B|^{1-{\alpha}/n}}\int_{2^{j+1}B}|f(y)|\,dy.
\end{equation*}
By using H\"older's inequality and $A_{p,q}$ condition on $w$, we get
\begin{equation*}
\begin{split}
&\frac{1}{|2^{j+1}B|^{1-{\alpha}/n}}\int_{2^{j+1}B}|f(y)|\,dy\\
&\leq\frac{1}{|2^{j+1}B|^{1-{\alpha}/n}}\bigg(\int_{2^{j+1}B}\big|f(y)\big|^pw^p(y)\,dy\bigg)^{1/p}
\left(\int_{2^{j+1}B}w(y)^{-p'}dy\right)^{1/{p'}}\\
&\leq C\big\|f\big\|_{\mathcal M^{p,\theta}(w^p,w^q)}\cdot\frac{\theta(w^q(2^{j+1}B))^{1/p}}{w^q(2^{j+1}B)^{1/q}}.
\end{split}
\end{equation*}
Hence
\begin{equation*}
\begin{split}
I_2&\leq C\big\|f\big\|_{\mathcal M^{p,\theta}(w^p,w^q)}
\times\sum_{j=1}^\infty\frac{\theta(w^q(2^{j+1}B))^{1/p}}{\theta(w^q(B))^{1/p}}\cdot\frac{w^q(B)^{1/q}}{w^q(2^{j+1}B)^{1/q}}.
\end{split}
\end{equation*}
Notice that $w^q\in A_q\subset A_\infty$ for $1<q<\infty$, then by using the $\mathcal D_\kappa$ condition (\ref{D condition}) of $\theta$ again, the inequality (\ref{compare}) with exponent $\delta>0$ and the fact that $0\leq\kappa<p/q$, we find that
\begin{align}\label{theta1}
\sum_{j=1}^\infty\frac{\theta(w^q(2^{j+1}B))^{1/p}}{\theta(w^q(B))^{1/p}}\cdot\frac{w^q(B)^{1/q}}{w^q(2^{j+1}B)^{1/q}}
&\leq C\sum_{j=1}^\infty\frac{w^q(B)^{1/q-{\kappa}/p}}{w^q(2^{j+1}B)^{1/q-{\kappa}/p}}\notag\\
&\leq C\sum_{j=1}^\infty\left(\frac{|B|}{|2^{j+1}B|}\right)^{\delta {(1/q-\kappa/p)}}\notag\\
&\leq C\sum_{j=1}^\infty\left(\frac{1}{2^{(j+1)n}}\right)^{\delta {(1/q-\kappa/p)}}\notag\\
&\leq C,
\end{align}
which gives our desired estimate $I_2\leq C\big\|f\big\|_{\mathcal M^{p,\theta}(w^p,w^q)}$. Combining the above estimates for $I_1$ and $I_2$, and then taking the supremum over all balls $B\subset\mathbb R^n$, we complete the proof of Theorem \ref{mainthm:1}.
\end{proof}

\begin{proof}[Proof of Theorem $\ref{mainthm:2}$]
Let $f\in\mathcal M^{1,\theta}(w,w^q)$ with $1<q<\infty$ and $w\in A_{1,q}$. For an arbitrary ball $B=B(x_0,r_B)\subset\mathbb R^n$, we represent $f$ as
\begin{equation*}
f=f\cdot\chi_{2B}+f\cdot\chi_{(2B)^c}:=f_1+f_2;
\end{equation*}
then for any given $\sigma>0$, by the linearity of the fractional integral operator $I_{\alpha}$, one can write
\begin{equation*}
\begin{split}
&\frac{1}{\theta(w^q(B))}\sigma\cdot
\Big[w^q\big(\big\{x\in B:\big|I_{\alpha}(f)(x)\big|>\sigma\big\}\big)\Big]^{1/q}\\
\end{split}
\end{equation*}
\begin{equation*}
\begin{split}
\leq &\frac{1}{\theta(w^q(B))}\sigma\cdot
\Big[w^q\big(\big\{x\in B:\big|I_{\alpha}(f_1)(x)\big|>\sigma/2\big\}\big)\Big]^{1/q}\\
&+\frac{1}{\theta(w^q(B))}\sigma\cdot
\Big[w^q\big(\big\{x\in B:\big|I_{\alpha}(f_2)(x)\big|>\sigma/2\big\}\big)\Big]^{1/q}\\
:=&I'_1+I'_2.
\end{split}
\end{equation*}
We first consider the term $I'_1$. By the weighted weak $(1,q)$-boundedness of $I_{\alpha}$ (see Theorem \ref{weak}), we have
\begin{equation*}
\begin{split}
I'_1&\leq C\cdot\frac{1}{\theta(w^q(B))}\|f_1\|_{L^1(w)}\\
&=C\cdot\frac{1}{\theta(w^q(B))}\bigg(\int_{2B}|f(x)|w(x)\,dx\bigg)\\
&\leq C\big\|f\big\|_{\mathcal M^{1,\theta}(w,w^q)}
\cdot\frac{\theta(w^q(2B))}{\theta(w^q(B))}.
\end{split}
\end{equation*}
Since $w$ is in the class $A_{1,q}$, we get $w^q\in A_1\subset A_{\infty}$ by Lemma \ref{relation}$(ii)$. Moreover, since $0<w^q(B)<w^q(2B)<+\infty$ when $w^q\in A_1$, then we apply the $\mathcal D_\kappa$ condition (\ref{D condition}) of $\theta$ and inequality (\ref{weights}) to obtain that
\begin{equation*}
\begin{split}
I'_1&\leq C\big\|f\big\|_{\mathcal M^{1,\theta}(w,w^q)}
\cdot\frac{w^q(2B)^{\kappa}}{w^q(B)^{\kappa}}\\
&\leq C\big\|f\big\|_{\mathcal M^{1,\theta}(w,w^q)}.
\end{split}
\end{equation*}
As for the term $I'_2$, it follows directly from Chebyshev's inequality and the pointwise estimate \eqref{pointwise1} that
\begin{equation*}
\begin{split}
I'_2&\leq\frac{1}{\theta(w^q(B))}\sigma\cdot\frac{\,2\,}{\sigma}\bigg(\int_B\big|I_\alpha(f_2)(x)\big|^qw^q(x)\,dx\bigg)^{1/q}\\
&\leq C\cdot\frac{w^q(B)^{1/q}}{\theta(w^q(B))}\sum_{j=1}^\infty\frac{1}{|2^{j+1}B|^{1-{\alpha}/n}}\int_{2^{j+1}B}|f(y)|\,dy.
\end{split}
\end{equation*}
Moreover, by applying H\"older's inequality and then the reverse H\"older's inequality in succession, we can show that $w^q\in A_1$ if and only if $w\in A_1\cap RH_q$ (see \cite{johnson}), where $RH_q$ denotes the reverse H\"older class. Another application of $A_1$ condition on $w$ gives that
\begin{equation*}
\begin{split}
\frac{1}{|2^{j+1}B|^{1-{\alpha}/n}}\int_{2^{j+1}B}\big|f(y)\big|\,dy
&\leq C\cdot\frac{|2^{j+1}B|^{\alpha/n}}{w(2^{j+1}B)}\cdot\underset{y\in 2^{j+1}B}{\mbox{ess\,inf}}\;w(y)\int_{2^{j+1}B}|f(y)|\,dy\\
&\leq C\cdot\frac{|2^{j+1}B|^{\alpha/n}}{w(2^{j+1}B)}\bigg(\int_{2^{j+1}B}|f(y)|w(y)\,dy\bigg)\\
&\leq C\big\|f\big\|_{\mathcal M^{1,\theta}(w,w^q)}\cdot\frac{|2^{j+1}B|^{\alpha/n}}{w(2^{j+1}B)}\cdot\theta(w^q(2^{j+1}B)).
\end{split}
\end{equation*}
In addition, note that $w\in RH_q$. We are able to verify that for any $j\in\mathbb Z^+$,
\begin{equation*}
w^q(2^{j+1}B)^{1/q}=\left(\int_{2^{j+1}B}w^q(x)\,dx\right)^{1/q}\leq C\cdot|2^{j+1}B|^{1/q-1}\cdot w(2^{j+1}B),
\end{equation*}
which is equivalent to
\begin{equation}\label{wq}
\frac{|2^{j+1}B|^{\alpha/n}}{w(2^{j+1}B)}\leq C\cdot\frac{1}{w^q(2^{j+1}B)^{1/q}}.
\end{equation}
Consequently,
\begin{equation*}
\begin{split}
I'_2&\leq C\big\|f\big\|_{\mathcal M^{1,\theta}(w,w^q)}
\times\sum_{j=1}^\infty\frac{\theta(w^q(2^{j+1}B))}{\theta(w^q(B))}\cdot\frac{w^q(B)^{1/q}}{w^q(2^{j+1}B)^{1/q}}.
\end{split}
\end{equation*}
Recall that $w^q\in A_1\subset A_\infty$, therefore, by using the $\mathcal D_\kappa$ condition (\ref{D condition}) of $\theta$ again, the inequality (\ref{compare}) with exponent $\delta^*>0$ and the fact that $0\leq\kappa<1/q$, we get
\begin{align}\label{theta2}
\sum_{j=1}^\infty\frac{\theta(w^q(2^{j+1}B))}{\theta(w^q(B))}\cdot\frac{w^q(B)^{1/q}}{w^q(2^{j+1}B)^{1/q}}
&\leq C\sum_{j=1}^\infty\frac{w^q(B)^{1/q-\kappa}}{w^q(2^{j+1}B)^{1/q-\kappa}}\notag\\
&\leq C\sum_{j=1}^\infty\left(\frac{|B|}{|2^{j+1}B|}\right)^{\delta^*(1/q-\kappa)}\notag\\
&\leq C\sum_{j=1}^\infty\left(\frac{1}{2^{(j+1)n}}\right)^{\delta^*(1/q-\kappa)}\notag\\
&\leq C,
\end{align}
which implies our desired estimate $I'_2\leq C\big\|f\big\|_{\mathcal M^{1,\theta}(w,w^q)}$. Summing up the above estimates for $I'_1$ and $I'_2$, and then taking the supremum over all balls $B\subset\mathbb R^n$ and all $\sigma>0$, we finish the proof of Theorem \ref{mainthm:2}.
\end{proof}

\section{Proofs of Theorems \ref{mainthm:3} and \ref{mainthm:4}}

To prove our main theorems in this section, we need the following lemma about $BMO$ functions.

\begin{lemma}\label{BMO}
Let $b$ be a function in $BMO(\mathbb R^n)$.

$(i)$ For every ball $B$ in $\mathbb R^n$ and for all $j\in\mathbb Z^+$, then
\begin{equation*}
\big|b_{2^{j+1}B}-b_B\big|\leq C\cdot(j+1)\|b\|_*.
\end{equation*}

$(ii)$ For $1<q<\infty$, every ball $B$ in $\mathbb R^n$ and for all $\mu\in A_{\infty}$, then
\begin{equation*}
\bigg(\int_B\big|b(x)-b_B\big|^q\mu(x)\,dx\bigg)^{1/q}\leq C\|b\|_*\cdot\mu(B)^{1/q}.
\end{equation*}
\end{lemma}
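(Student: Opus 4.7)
The plan is to prove the two standard BMO estimates using, respectively, a telescoping argument and the John--Nirenberg inequality coupled with the $A_\infty$ characterization already recorded in \eqref{compare}.

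For part $(i)$, I would set up the telescoping decomposition
\begin{equation*}
b_{2^{j+1}B}-b_B=\sum_{k=0}^{j}\bigl(b_{2^{k+1}B}-b_{2^k B}\bigr),
\end{equation*}
and estimate each summand by enlarging the domain of integration from $2^k B$ to $2^{k+1}B$:
\begin{equation*}
\bigl|b_{2^{k+1}B}-b_{2^k B}\bigr|=\biggl|\frac{1}{|2^k B|}\int_{2^k B}\bigl(b(y)-b_{2^{k+1}B}\bigr)\,dy\biggr|\leq\frac{|2^{k+1}B|}{|2^k B|}\cdot\frac{1}{|2^{k+1}B|}\int_{2^{k+1}B}\bigl|b(y)-b_{2^{k+1}B}\bigr|\,dy\leq 2^n\|b\|_*.
\end{equation*}
Summing over $k=0,1,\dots,j$ and applying the triangle inequality yields the bound $C(j+1)\|b\|_*$ with $C=2^n$. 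This is the classical trick and requires no subtlety beyond the definition of $\|b\|_*$.

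For part $(ii)$, the key input is the John--Nirenberg inequality, which asserts the existence of absolute constants $C_1,C_2>0$ such that for every ball $B$ and every $\lambda>0$,
\begin{equation*}
\bigl|\bigl\{x\in B:|b(x)-b_B|>\lambda\bigr\}\bigr|\leq C_1\exp\!\Bigl(-\tfrac{C_2\lambda}{\|b\|_*}\Bigr)|B|.
\end{equation*}
Since $\mu\in A_\infty$, I can invoke \eqref{compare} with some exponent $\delta>0$ applied to the level set $E_\lambda:=\{x\in B:|b(x)-b_B|>\lambda\}$ to upgrade the Lebesgue-measure decay to a weighted one:
\begin{equation*}
\mu(E_\lambda)\leq C\,\mu(B)\left(\frac{|E_\lambda|}{|B|}\right)^{\!\delta}\leq C'\,\mu(B)\exp\!\Bigl(-\tfrac{C_2\delta\lambda}{\|b\|_*}\Bigr).
\end{equation*}
Then the layer-cake representation gives
\begin{equation*}
\int_B\bigl|b(x)-b_B\bigr|^q\mu(x)\,dx=q\int_0^\infty\lambda^{q-1}\mu(E_\lambda)\,d\lambda\leq C'q\,\mu(B)\int_0^\infty\lambda^{q-1}\exp\!\Bigl(-\tfrac{C_2\delta\lambda}{\|b\|_*}\Bigr)d\lambda=C''\|b\|_*^q\,\mu(B),
\end{equation*}
where the last integral is evaluated by the change of variable $t=C_2\delta\lambda/\|b\|_*$ and yields a finite constant times $\|b\|_*^q$. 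Extracting the $q$-th root produces the desired inequality.

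The only mild subtlety is ensuring that the $A_\infty$ exponent $\delta$ combines correctly with the exponential decay from John--Nirenberg; this is straightforward because the $\delta$-th power of an exponential is again an exponential. No part of the argument is genuinely difficult, so I do not expect any serious obstacles; the main point is simply to apply the two classical facts in the right order.
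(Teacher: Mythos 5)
Your proof is correct and complete. The paper itself does not give an argument for this lemma; it simply refers to \cite{stein2} for part $(i)$ and to \cite{wang} for part $(ii)$, so there is no in-text approach to compare against. What you have written is precisely the standard content those references supply: the telescoping decomposition with domain enlargement for $(i)$, and the John--Nirenberg exponential level-set decay combined with the $A_\infty$ property \eqref{compare} and a layer-cake integral for $(ii)$.

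A few minor remarks on your write-up, none of which affect correctness. In part $(i)$, the identity should read
\begin{equation*}
b_{2^{k}B}-b_{2^{k+1}B}=\frac{1}{|2^kB|}\int_{2^kB}\bigl(b(y)-b_{2^{k+1}B}\bigr)\,dy,
\end{equation*}
which differs from your display by a sign; since you immediately pass to absolute values this has no consequence, but the careful reader will notice. In part $(ii)$, the final constant produced by the Gamma integral is
\begin{equation*}
C'q\int_0^\infty \lambda^{q-1}e^{-C_2\delta\lambda/\|b\|_*}\,d\lambda
=C'q\,\Gamma(q)\left(\frac{\|b\|_*}{C_2\delta}\right)^{q},
\end{equation*}
so it depends on $q$ and on the $A_\infty$ exponent $\delta$ of $\mu$; this is consistent with the lemma's convention that $C$ may depend on the structural data ($q$ and the $A_\infty$ characteristic of $\mu$) but not on the ball $B$ or on $b$, which is exactly what is needed for the applications in Sections 4--6.
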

\begin{proof}
For the proof of $(i)$, we refer the reader to \cite{stein2}. For the proof of $(ii)$, we refer the reader to \cite{wang}.
\end{proof}

\begin{proof}[Proof of Theorem $\ref{mainthm:3}$]
Let $f\in\mathcal M^{p,\theta}(w^p,w^q)$ with $1<p,q<\infty$ and $w\in A_{p,q}$. For each fixed ball $B=B(x_0,r_B)\subset\mathbb R^n$, as before, we represent $f$ as $f=f_1+f_2$, where $f_1=f\cdot\chi_{2B}$, $2B=B(x_0,2r_B)\subset\mathbb R^n$. By the linearity of the commutator operator $[b,I_{\alpha}]$, we write
\begin{equation*}
\begin{split}
&\frac{1}{\theta(w^q(B))^{1/p}}\bigg(\int_B\big|[b,I_{\alpha}](f)(x)\big|^qw^q(x)\,dx\bigg)^{1/q}\\
\leq\ &\frac{1}{\theta(w^q(B))^{1/p}}\bigg(\int_B\big|[b,I_{\alpha}](f_1)(x)\big|^qw^q(x)\,dx\bigg)^{1/q}\\
&+\frac{1}{\theta(w^q(B))^{1/p}}\bigg(\int_B\big|[b,I_{\alpha}](f_2)(x)\big|^qw^q(x)\,dx\bigg)^{1/q}\\
:=\ &J_1+J_2.
\end{split}
\end{equation*}
Since $w$ is in the class $A_{p,q}$, we get $w^q\in A_q\subset A_{\infty}$ by Lemma \ref{relation}$(i)$. By using Theorem \ref{cstrong}, the $\mathcal D_\kappa$ condition (\ref{D condition}) of $\theta$ and inequality (\ref{weights}), we obtain
\begin{equation*}
\begin{split}
J_1&\leq\frac{1}{\theta(w^q(B))^{1/p}}\big\|[b,I_{\alpha}](f_1)\big\|_{L^q(w^q)}\\
&\leq C\cdot\frac{1}{\theta(w^q(B))^{1/p}}
\bigg(\int_{2B}|f(x)|^p w^p(x)\,dx\bigg)^{1/p}\\
&\leq C\big\|f\big\|_{\mathcal M^{p,\theta}(w^p,w^q)}
\cdot\frac{\theta(w^q(2B))^{1/p}}{\theta(w^q(B))^{1/p}}\\
&\leq C\big\|f\big\|_{\mathcal M^{p,\theta}(w^p,w^q)}\cdot\frac{w^q(2B)^{\kappa/p}}{w^q(B)^{\kappa/p}}\\
&\leq C\big\|f\big\|_{\mathcal M^{p,\theta}(w^p,w^q)}.
\end{split}
\end{equation*}
Let us now turn to the estimate of $J_2$. By definition, for any $x\in B$, we have
\begin{equation*}
\big|[b,I_{\alpha}](f_2)(x)\big|\leq\big|b(x)-b_{B}\big|\cdot\big|I_{\alpha}(f_2)(x)\big|
+\Big|I_{\alpha}\big([b_{B}-b]f_2\big)(x)\Big|.
\end{equation*}
In the proof of Theorem \ref{mainthm:1}, we have already shown that (see \eqref{pointwise1})
\begin{equation*}
\big|I_{\alpha}(f_2)(x)\big|\leq C\sum_{j=1}^\infty\frac{1}{|2^{j+1}B|^{1-{\alpha}/n}}\int_{2^{j+1}B}|f(y)|\,dy.
\end{equation*}
Following the same argument as in \eqref{pointwise1}, we can also prove that
\begin{align}\label{pointwise2}
\Big|I_{\alpha}\big([b_{B}-b]f_2\big)(x)\Big|
&\leq\int_{\mathbb R^n}\frac{|[b_B-b(y)]f_2(y)|}{|x-y|^{n-\alpha}}dy\notag\\
&\leq C\int_{(2B)^c}\frac{|[b_B-b(y)]f(y)|}{|x_0-y|^{n-\alpha}}dy\notag\\
&\leq C\sum_{j=1}^\infty\frac{1}{|2^{j+1}B|^{1-{\alpha}/n}}\int_{2^{j+1}B}\big|b(y)-b_{B}\big|\cdot\big|f(y)\big|\,dy.
\end{align}
Hence, from the above two pointwise estimates for $\big|I_\alpha(f_2)(x)\big|$ and $\big|I_\alpha\big([b_{B}-b]f_2\big)(x)\big|$, it follows that
\begin{equation*}
\begin{split}
J_2&\leq\frac{C}{\theta(w^q(B))^{1/p}}\bigg(\int_B\big|b(x)-b_B\big|^qw^q(x)\,dx\bigg)^{1/q}
\times\bigg(\sum_{j=1}^\infty\frac{1}{|2^{j+1}B|^{1-{\alpha}/n}}\int_{2^{j+1}B}|f(y)|\,dy\bigg)\\
&+C\cdot\frac{w^q(B)^{1/q}}{\theta(w^q(B))^{1/p}}\sum_{j=1}^\infty
\frac{1}{|2^{j+1}B|^{1-{\alpha}/n}}\int_{2^{j+1}B}\big|b_{2^{j+1}B}-b_B\big|\cdot\big|f(y)\big|\,dy\\
&+C\cdot\frac{w^q(B)^{1/q}}{\theta(w^q(B))^{1/p}}\sum_{j=1}^\infty
\frac{1}{|2^{j+1}B|^{1-{\alpha}/n}}\int_{2^{j+1}B}\big|b(y)-b_{2^{j+1}B}\big|\cdot\big|f(y)\big|\,dy\\
&:=J_3+J_4+J_5.
\end{split}
\end{equation*}
Below we will give the estimates of $J_3$, $J_4$ and $J_5$, respectively. To estimate $J_3$, note that $w^q\in A_q\subset A_{\infty}$ with $1<q<\infty$. Using the second part of Lemma \ref{BMO}, H\"older's inequality and the $A_{p,q}$ condition on $w$, we obtain
\begin{equation*}
\begin{split}
J_3&\leq C\|b\|_*\cdot\frac{w^q(B)^{1/q}}{\theta(w^q(B))^{1/p}}
\times\bigg(\sum_{j=1}^\infty\frac{1}{|2^{j+1}B|^{1-{\alpha}/n}}\int_{2^{j+1}B}|f(y)|\,dy\bigg)\\
&\leq C\|b\|_*\cdot\frac{w^q(B)^{1/q}}{\theta(w^q(B))^{1/p}}
\sum_{j=1}^\infty\frac{1}{|2^{j+1}B|^{1-{\alpha}/n}}
\bigg(\int_{2^{j+1}B}\big|f(y)\big|^pw^p(y)\,dy\bigg)^{1/p}\\
&\times\left(\int_{2^{j+1}B}w(y)^{-{p'}}dy\right)^{1/{p'}}\\
&\leq C\big\|f\big\|_{\mathcal M^{p,\theta}(w^p,w^q)}
\times\sum_{j=1}^\infty\frac{\theta(w^q(2^{j+1}B))^{1/p}}{\theta(w^q(B))^{1/p}}\cdot\frac{w^q(B)^{1/q}}{w^q(2^{j+1}B)^{1/q}}\\
&\leq C\big\|f\big\|_{\mathcal M^{p,\theta}(w^p,w^q)},
\end{split}
\end{equation*}
where in the last inequality we have used the estimate \eqref{theta1}. To estimate $J_4$, applying the first part of Lemma \ref{BMO}, H\"older's inequality and the $A_{p,q}$ condition on $w$, we can deduce that
\begin{equation*}
\begin{split}
J_4&\leq C\|b\|_*\cdot\frac{w^q(B)^{1/q}}{\theta(w^q(B))^{1/p}}
\times\sum_{j=1}^\infty\frac{(j+1)}{|2^{j+1}B|^{1-{\alpha}/n}}\int_{2^{j+1}B}|f(y)|\,dy\\
&\leq C\|b\|_*\cdot\frac{w^q(B)^{1/q}}{\theta(w^q(B))^{1/p}}
\sum_{j=1}^\infty\frac{(j+1)}{|2^{j+1}B|^{1-{\alpha}/n}}
\bigg(\int_{2^{j+1}B}\big|f(y)\big|^pw^p(y)\,dy\bigg)^{1/p}\\
&\times\left(\int_{2^{j+1}B}w(y)^{-{p'}}dy\right)^{1/{p'}}\\
&\leq C\big\|f\big\|_{\mathcal M^{p,\theta}(w^p,w^q)}
\times\sum_{j=1}^\infty\big(j+1\big)
\cdot\frac{\theta(w^q(2^{j+1}B))^{1/p}}{\theta(w^q(B))^{1/p}}\cdot\frac{w^q(B)^{1/q}}{w^q(2^{j+1}B)^{1/q}}.
\end{split}
\end{equation*}
For any $j\in\mathbb Z^+$, since $0<w^q(B)<w^q(2^{j+1}B)<+\infty$ when $w^q\in A_q$ with $1<q<\infty$, then by using the $\mathcal D_\kappa$ condition (\ref{D condition}) of $\theta$ and the inequality \eqref{compare} with exponent $\delta>0$, we thus obtain
\begin{align}\label{theta3}
\sum_{j=1}^\infty\big(j+1\big)\cdot\frac{\theta(w^q(2^{j+1}B))^{1/p}}{\theta(w^q(B))^{1/p}}\cdot\frac{w^q(B)^{1/q}}{w^q(2^{j+1}B)^{1/q}}
&\leq C\sum_{j=1}^\infty\big(j+1\big)\cdot\frac{w^q(B)^{1/q-{\kappa}/p}}{w^q(2^{j+1}B)^{1/q-{\kappa}/p}}\notag\\
&\leq C\sum_{j=1}^\infty\big(j+1\big)\cdot\left(\frac{|B|}{|2^{j+1}B|}\right)^{\delta{(1/q-\kappa/p)}}\notag\\
&\leq C\sum_{j=1}^\infty\big(j+1\big)\cdot\left(\frac{1}{2^{(j+1)n}}\right)^{\delta{(1/q-\kappa/p)}}\notag\\
&\leq C,
\end{align}
where the last series is convergent since the exponent $\delta{(1/q-\kappa/p)}$ is positive. This implies our desired estimate $J_4\leq C\big\|f\big\|_{\mathcal M^{p,\theta}(w^p,w^q)}$. It remains to estimate the last term $J_5$. An application of H\"older's inequality gives us that
\begin{equation*}
\begin{split}
J_5&\leq C\cdot\frac{w^q(B)^{1/q}}{\theta(w^q(B))^{1/p}}\sum_{j=1}^\infty\frac{1}{|2^{j+1}B|^{1-{\alpha}/n}}
\bigg(\int_{2^{j+1}B}\big|f(y)\big|^pw^p(y)\,dy\bigg)^{1/p}\\
&\times\left(\int_{2^{j+1}B}\big|b(y)-b_{2^{j+1}B}\big|^{p'}w(y)^{-{p'}}dy\right)^{1/{p'}}.
\end{split}
\end{equation*}
If we set $\mu(y)=w(y)^{-{p'}}$, then we have $\mu\in A_{p'}\subset A_{\infty}$ because $w\in A_{p,q}$ by Lemma \ref{relation}$(i)$. Thus, it follows from the second part of Lemma \ref{BMO} and the $A_{p,q}$ condition that
\begin{align}\label{WBMO}
\left(\int_{2^{j+1}B}\big|b(y)-b_{2^{j+1}B}\big|^{p'}\mu(y)\,dy\right)^{1/{p'}}
&\leq C\|b\|_*\cdot\mu\big(2^{j+1}B\big)^{1/{p'}}\notag\\
&=C\|b\|_*\cdot\left(\int_{2^{j+1}B}w(y)^{-{p'}}dy\right)^{1/{p'}}\notag\\
&\leq C\|b\|_*\cdot\frac{|2^{j+1}B|^{1-{\alpha}/n}}{w^q(2^{j+1}B)^{1/q}}.
\end{align}
Therefore, in view of the estimates \eqref{WBMO} and \eqref{theta1}, we conclude that
\begin{equation*}
\begin{split}
J_5&\leq C\|b\|_*\cdot\frac{w^q(B)^{1/q}}{\theta(w^q(B))^{1/p}}
\sum_{j=1}^\infty\frac{1}{w^q(2^{j+1}B)^{1/q}}\bigg(\int_{2^{j+1}B}\big|f(y)\big|^pw^p(y)\,dy\bigg)^{1/p}\\
&\leq C\big\|f\big\|_{\mathcal M^{p,\theta}(w^p,w^q)}\times
\sum_{j=1}^\infty\frac{\theta(w^q(2^{j+1}B))^{1/p}}{\theta(w^q(B))^{1/p}}\cdot\frac{w^q(B)^{1/q}}{w^q(2^{j+1}B)^{1/q}}\\
&\leq C\big\|f\big\|_{\mathcal M^{p,\theta}(w^p,w^q)}.
\end{split}
\end{equation*}
Summarizing the estimates derived above and then taking the supremum over all balls $B\subset\mathbb R^n$, we complete the proof of Theorem \ref{mainthm:3}.
\end{proof}

\begin{proof}[Proof of Theorem $\ref{mainthm:4}$]
For any fixed ball $B=B(x_0,r_B)$ in $\mathbb R^n$, as before, we represent $f$ as $f=f_1+f_2$, where $f_1=f\cdot\chi_{2B}$, $2B=B(x_0,2r_B)\subset\mathbb R^n$. Then for any given $\sigma>0$, by the linearity of the commutator operator $[b,I_{\alpha}]$, we write
\begin{equation*}
\begin{split}
&\frac{1}{\theta(w^q(B))}\cdot \Big[w^q\big(\big\{x\in B:\big|[b,I_{\alpha}](f)(x)\big|>\sigma\big\}\big)\Big]^{1/q}\\
\leq &\frac{1}{\theta(w^q(B))}\cdot \Big[w^q\big(\big\{x\in B:\big|[b,I_{\alpha}](f_1)(x)\big|>\sigma/2\big\}\big)\Big]^{1/q}\\
&+\frac{1}{\theta(w^q(B))}\cdot \Big[w^q\big(\big\{x\in B:\big|[b,I_{\alpha}](f_2)(x)\big|>\sigma/2\big\}\big)\Big]^{1/q}\\
:=&J'_1+J'_2.
\end{split}
\end{equation*}
We first consider the term $J'_1$. By using Theorem \ref{cweak} and the previous estimate \eqref{main esti2}, we get
\begin{equation*}
\begin{split}
J'_1&\leq C\cdot\frac{1}{\theta(w^q(B))}\int_{\mathbb R^n}\Phi\left(\frac{|f_1(x)|}{\sigma}\right)\cdot w(x)\,dx\\
&= C\cdot\frac{1}{\theta(w^q(B))}\int_{2B}\Phi\left(\frac{|f(x)|}{\sigma}\right)\cdot w(x)\,dx\\
&= C\cdot\frac{\theta(w^q(2B))}{\theta(w^q(B))}\cdot\frac{1}{\theta(w^q(2B))}
\int_{2B}\Phi\left(\frac{|f(x)|}{\sigma}\right)\cdot w(x)\,dx\\
&\leq C\cdot\frac{\theta(w^q(2B))}{\theta(w^q(B))}\cdot\frac{w(2B)}{\theta(w^q(2B))}
\cdot\bigg\|\Phi\left(\frac{|f|}{\,\sigma\,}\right)\bigg\|_{L\log L(w),2B}.
\end{split}
\end{equation*}
Since $w$ is a weight in the class $A_{1,q}$, one has $w^q\in A_1\subset A_{\infty}$ by Lemma \ref{relation}$(ii)$. Moreover, since $0<w^q(B)<w^q(2B)<+\infty$ when $w^q\in A_1$, then by the $\mathcal D_\kappa$ condition (\ref{D condition}) of $\theta$ and inequality (\ref{weights}), we have
\begin{equation*}
\begin{split}
J'_1&\leq C\cdot\frac{w^q(2B)^\kappa}{w^q(B)^\kappa}\cdot
\left\{\frac{w(2B)}{\theta(w^q(2B))}\cdot\bigg\|\Phi\left(\frac{|f|}{\,\sigma\,}\right)\bigg\|_{L\log L(w),2B}\right\}\\
&\leq C\cdot\bigg\|\Phi\left(\frac{|f|}{\,\sigma\,}\right)\bigg\|_{\mathcal M^{1,\theta}_{L\log L}(w,w^q)},
\end{split}
\end{equation*}
which is our desired estimate. We now turn to deal with the term $J'_2$. Recall that the following inequality
\begin{equation*}
\big|[b,I_{\alpha}](f_2)(x)\big|\leq\big|b(x)-b_{B}\big|\cdot\big|I_{\alpha}(f_2)(x)\big|
+\Big|I_{\alpha}\big([b_{B}-b]f_2\big)(x)\Big|
\end{equation*}
is valid. So we can further decompose $J'_2$ as
\begin{equation*}
\begin{split}
J'_2\leq&\frac{1}{\theta(w^q(B))}\cdot
\Big[w^q\big(\big\{x\in B:\big|b(x)-b_{B}\big|\cdot\big|I_{\alpha}(f_2)(x)\big|>\sigma/4\big\}\big)\Big]^{1/q}\\
&+\frac{1}{\theta(w^q(B))}\cdot
\Big[w^q\Big(\Big\{x\in B:\Big|I_{\alpha}\big([b_{B}-b]f_2\big)(x)\Big|>\sigma/4\Big\}\Big)\Big]^{1/q}\\
:=&J'_3+J'_4.
\end{split}
\end{equation*}
By using the previous pointwise estimate \eqref{pointwise1}, Chebyshev's inequality together with Lemma \ref{BMO}$(ii)$, we deduce that
\begin{equation*}
\begin{split}
J'_3&\leq\frac{1}{\theta(w^q(B))}\cdot\frac{\,4\,}{\sigma}
\bigg(\int_B\big|b(x)-b_{B}\big|^q\cdot\big|I_{\alpha}(f_2)(x)\big|^qw^q(x)\,dx\bigg)^{1/q}\\
&\leq C\sum_{j=1}^\infty\frac{1}{|2^{j+1}B|^{1-{\alpha}/n}}\int_{2^{j+1}B}\frac{|f(y)|}{\sigma}\,dy
\times\frac{1}{\theta(w^q(B))}\cdot\bigg(\int_B\big|b(x)-b_{B}\big|^qw^q(x)\,dx\bigg)^{1/q}\\
&\leq C\|b\|_*\sum_{j=1}^\infty\frac{1}{|2^{j+1}B|^{1-{\alpha}/n}}\int_{2^{j+1}B}\frac{|f(y)|}{\sigma}\,dy
\times\frac{w^q(B)^{1/q}}{\theta(w^q(B))}.\\
\end{split}
\end{equation*}
Furthermore, note that $t\leq\Phi(t)=t\cdot(1+\log^+t)$ for any $t>0$. As we pointed out in Theorem \ref{mainthm:2} that $w^q\in A_1$ if and only if $w\in A_1\cap RH_q$, it then follows from the $A_1$ condition and the previous estimate \eqref{main esti1} that
\begin{equation*}
\begin{split}
J'_3&\leq C\sum_{j=1}^\infty\frac{1}{|2^{j+1}B|^{1-{\alpha}/n}}\int_{2^{j+1}B}\Phi\left(\frac{|f(y)|}{\sigma}\right)dy
\times\frac{w^q(B)^{1/q}}{\theta(w^q(B))}\\
&\leq C\sum_{j=1}^\infty\frac{|2^{j+1}B|^{{\alpha}/n}}{w(2^{j+1}B)}\int_{2^{j+1}B}\Phi\left(\frac{|f(y)|}{\sigma}\right)\cdot w(y)\,dy
\times\frac{w^q(B)^{1/q}}{\theta(w^q(B))}\\
&\leq C\sum_{j=1}^\infty\bigg\|\Phi\left(\frac{|f|}{\,\sigma\,}\right)\bigg\|_{L\log L(w),2^{j+1}B}
\times|2^{j+1}B|^{{\alpha}/n}\cdot\frac{w^q(B)^{1/q}}{\theta(w^q(B))}\\
&= C\sum_{j=1}^\infty\left\{\frac{w(2^{j+1}B)}{\theta(w^q(2^{j+1}B))}\cdot
\bigg\|\Phi\left(\frac{|f|}{\,\sigma\,}\right)\bigg\|_{L\log L(w),2^{j+1}B}\right\}\\
&\times\frac{|2^{j+1}B|^{{\alpha}/n}}{w(2^{j+1}B)}\cdot\frac{\theta(w^q(2^{j+1}B))}{\theta(w^q(B))}\cdot w^q(B)^{1/q}.\\
\end{split}
\end{equation*}
In view of \eqref{wq} and \eqref{theta2}, we have
\begin{equation*}
\begin{split}
J'_3&\leq C\cdot\bigg\|\Phi\left(\frac{|f|}{\,\sigma\,}\right)\bigg\|_{\mathcal M^{1,\theta}_{L\log L}(w,w^q)}
\times\sum_{j=1}^\infty\frac{|2^{j+1}B|^{{\alpha}/n}}{w(2^{j+1}B)}\cdot\frac{\theta(w^q(2^{j+1}B))}{\theta(w^q(B))}\cdot w^q(B)^{1/q}\\
&\leq C\cdot\bigg\|\Phi\left(\frac{|f|}{\,\sigma\,}\right)\bigg\|_{\mathcal M^{1,\theta}_{L\log L}(w,w^q)}
\times\sum_{j=1}^\infty\frac{\theta(w^q(2^{j+1}B))}{\theta(w^q(B))}\cdot\frac{w^q(B)^{1/q}}{w^q(2^{j+1}B)^{1/q}}\\
&\leq C\cdot\bigg\|\Phi\left(\frac{|f|}{\,\sigma\,}\right)\bigg\|_{\mathcal M^{1,\theta}_{L\log L}(w,w^q)}.
\end{split}
\end{equation*}
On the other hand, applying the pointwise estimate \eqref{pointwise2} and Chebyshev's inequality, we get
\begin{equation*}
\begin{split}
J'_4&\leq\frac{1}{\theta(w^q(B))}\cdot\frac{\,4\,}{\sigma}
\bigg(\int_B\Big|I_{\alpha}\big([b_{B}-b]f_2\big)(x)\Big|^qw^q(x)\,dx\bigg)^{1/q}\\
&\leq\frac{w^q(B)^{1/q}}{\theta(w^q(B))}\cdot\frac{\,C\,}{\sigma}
\sum_{j=1}^\infty\frac{1}{|2^{j+1}B|^{1-{\alpha}/n}}\int_{2^{j+1}B}\big|b(y)-b_{B}\big|\cdot\big|f(y)\big|\,dy\\
\end{split}
\end{equation*}
\begin{equation*}
\begin{split}
&\leq\frac{w^q(B)^{1/q}}{\theta(w^q(B))}\cdot\frac{\,C\,}{\sigma}
\sum_{j=1}^\infty\frac{1}{|2^{j+1}B|^{1-{\alpha}/n}}\int_{2^{j+1}B}\big|b(y)-b_{2^{j+1}B}\big|\cdot\big|f(y)\big|\,dy\\
&+\frac{w^q(B)^{1/q}}{\theta(w^q(B))}\cdot\frac{\,C\,}{\sigma}
\sum_{j=1}^\infty\frac{1}{|2^{j+1}B|^{1-{\alpha}/n}}\int_{2^{j+1}B}\big|b_{2^{j+1}B}-b_B\big|\cdot\big|f(y)\big|\,dy\\
&:=J'_5+J'_6.
\end{split}
\end{equation*}
For the term $J'_5$, since $w\in A_1$, it follows from the $A_1$ condition and the fact $t\leq \Phi(t)$ that
\begin{equation*}
\begin{split}
J'_5&\leq\frac{\,C\,}{\sigma}\cdot\frac{w^q(B)^{1/q}}{\theta(w^q(B))}
\sum_{j=1}^\infty\frac{|2^{j+1}B|^{{\alpha}/n}}{w(2^{j+1}B)}\int_{2^{j+1}B}\big|b(y)-b_{2^{j+1}B}\big|\cdot\big|f(y)\big|w(y)\,dy\\
&\leq C\cdot\frac{w^q(B)^{1/q}}{\theta(w^q(B))}
\sum_{j=1}^\infty\frac{|2^{j+1}B|^{{\alpha}/n}}{w(2^{j+1}B)}\int_{2^{j+1}B}\big|b(y)-b_{2^{j+1}B}\big|
\cdot\Phi\left(\frac{|f(y)|}{\sigma}\right)w(y)\,dy.\\
\end{split}
\end{equation*}
Furthermore, we use the generalized H\"older's inequality with weight \eqref{Wholder} to obtain
\begin{equation*}
\begin{split}
J'_5&\leq C\cdot\frac{w^q(B)^{1/q}}{\theta(w^q(B))}
\sum_{j=1}^\infty\big|2^{j+1}B\big|^{{\alpha}/n}\cdot\big\|b-b_{2^{j+1}B}\big\|_{\exp L(w),2^{j+1}B}
\bigg\|\Phi\left(\frac{|f|}{\,\sigma\,}\right)\bigg\|_{L\log L(w),2^{j+1}B}\\
&\leq C\|b\|_*\cdot\frac{w^q(B)^{1/q}}{\theta(w^q(B))}
\sum_{j=1}^\infty\big|2^{j+1}B\big|^{{\alpha}/n}\cdot\bigg\|\Phi\left(\frac{|f|}{\,\sigma\,}\right)\bigg\|_{L\log L(w),2^{j+1}B}.
\end{split}
\end{equation*}
In the last inequality, we have used the well-known fact that (see \cite{zhang})
\begin{equation}\label{Jensen}
\big\|b-b_{B}\big\|_{\exp L(w),B}\leq C\|b\|_*,\qquad \mbox{for any ball }B\subset\mathbb R^n.
\end{equation}
It is equivalent to the inequality
\begin{equation*}
\frac{1}{w(B)}\int_B\exp\bigg(\frac{|b(y)-b_B|}{c_0\|b\|_*}\bigg)w(y)\,dy\leq C,
\end{equation*}
which is just a corollary of the well-known John--Nirenberg's inequality (see \cite{john}) and the comparison property of $A_1$ weights. Hence, by the estimates \eqref{wq} and \eqref{theta2},
\begin{equation*}
\begin{split}
J'_5&\leq C\|b\|_*\sum_{j=1}^\infty\left\{\frac{w(2^{j+1}B)}{\theta(w^q(2^{j+1}B))}\cdot
\bigg\|\Phi\left(\frac{|f|}{\,\sigma\,}\right)\bigg\|_{L\log L(w),2^{j+1}B}\right\}\\
&\times\frac{|2^{j+1}B|^{{\alpha}/n}}{w(2^{j+1}B)}\cdot\frac{\theta(w^q(2^{j+1}B))}{\theta(w^q(B))}\cdot w^q(B)^{1/q}\\
&\leq C\cdot\bigg\|\Phi\left(\frac{|f|}{\,\sigma\,}\right)\bigg\|_{\mathcal M^{1,\theta}_{L\log L}(w,w^q)}
\times\sum_{j=1}^\infty\frac{\theta(w^q(2^{j+1}B))}{\theta(w^q(B))}\cdot\frac{w^q(B)^{1/q}}{w^q(2^{j+1}B)^{1/q}}\\
&\leq C\cdot\bigg\|\Phi\left(\frac{|f|}{\,\sigma\,}\right)\bigg\|_{\mathcal M^{1,\theta}_{L\log L}(w,w^q)}.
\end{split}
\end{equation*}
For the last term $J'_6$ we proceed as follows. Using the first part of Lemma \ref{BMO} together with the facts $w\in A_1$ and $t\leq\Phi(t)=t\cdot(1+\log^+t)$, we deduce that
\begin{equation*}
\begin{split}
J'_6&\leq C\cdot\frac{w^q(B)^{1/q}}{\theta(w^q(B))}
\sum_{j=1}^\infty(j+1)\|b\|_*\cdot\frac{1}{|2^{j+1}B|^{1-{\alpha}/n}}\int_{2^{j+1}B}\frac{|f(y)|}{\sigma}\,dy\\
&\leq C\cdot\frac{w^q(B)^{1/q}}{\theta(w^q(B))}
\sum_{j=1}^\infty(j+1)\|b\|_*\cdot\frac{|2^{j+1}B|^{{\alpha}/n}}{w(2^{j+1}B)}\int_{2^{j+1}B}\frac{|f(y)|}{\sigma}\cdot w(y)\,dy\\
&\leq C\|b\|_*\cdot\frac{w^q(B)^{1/q}}{\theta(w^q(B))}
\sum_{j=1}^\infty\frac{(j+1)|2^{j+1}B|^{{\alpha}/n}}{w(2^{j+1}B)}\int_{2^{j+1}B}\Phi\left(\frac{|f(y)|}{\sigma}\right)\cdot w(y)\,dy.\\
\end{split}
\end{equation*}
Making use of the inequalities \eqref{main esti1} and \eqref{wq}, we further obtain
\begin{equation*}
\begin{split}
J'_6&\leq C\cdot\frac{w^q(B)^{1/q}}{\theta(w^q(B))}
\sum_{j=1}^\infty(j+1)|2^{j+1}B|^{{\alpha}/n}\cdot\bigg\|\Phi\left(\frac{|f|}{\,\sigma\,}\right)\bigg\|_{L\log L(w),2^{j+1}B}\\
&=C\cdot\sum_{j=1}^\infty\left\{\frac{w(2^{j+1}B)}{\theta(w^q(2^{j+1}B))}\cdot
\bigg\|\Phi\left(\frac{|f|}{\,\sigma\,}\right)\bigg\|_{L\log L(w),2^{j+1}B}\right\}\\
&\times(j+1)\cdot\frac{|2^{j+1}B|^{{\alpha}/n}}{w(2^{j+1}B)}\cdot\frac{\theta(w^q(2^{j+1}B))}{\theta(w^q(B))}\cdot w^q(B)^{1/q}\\
&\leq C\cdot\bigg\|\Phi\left(\frac{|f|}{\,\sigma\,}\right)\bigg\|_{\mathcal M^{1,\theta}_{L\log L}(w,w^q)}
\times\sum_{j=1}^\infty(j+1)\cdot\frac{\theta(w^q(2^{j+1}B))}{\theta(w^q(B))}\cdot\frac{w^q(B)^{1/q}}{w^q(2^{j+1}B)^{1/q}}.
\end{split}
\end{equation*}
Recall that $w^q\in A_1\subset A_\infty$ with $1<q<\infty$. We can now argue exactly as we did in the estimation of $J_4$ to get (now choose $\delta^*$ in \eqref{compare})
\begin{align}\label{theta4}
\sum_{j=1}^\infty\big(j+1\big)\cdot\frac{\theta(w^q(2^{j+1}B))}{\theta(w^q(B))}\cdot\frac{w^q(B)^{1/q}}{w^q(2^{j+1}B)^{1/q}}
&\leq C\sum_{j=1}^\infty\big(j+1\big)\cdot\frac{w^q(B)^{1/q-\kappa}}{w^q(2^{j+1}B)^{1/q-\kappa}}\notag\\
&\leq C\sum_{j=1}^\infty\big(j+1\big)\cdot\left(\frac{|B|}{|2^{j+1}B|}\right)^{\delta^*{(1/q-\kappa)}}\notag\\
&\leq C\sum_{j=1}^\infty\big(j+1\big)\cdot\left(\frac{1}{2^{(j+1)n}}\right)^{\delta^*{(1/q-\kappa)}}\notag\\
&\leq C.
\end{align}
Notice that the exponent $\delta^*{(1/q-\kappa)}$ is positive by the choice of $\kappa$, which guarantees that the last series is convergent. If we substitute this estimate \eqref{theta4} into the term $J'_6$, then we get the desired inequality
\begin{equation*}
J'_6\leq C\cdot\bigg\|\Phi\left(\frac{|f|}{\,\sigma\,}\right)\bigg\|_{\mathcal M^{1,\theta}_{L\log L}(w,w^q)}.
\end{equation*}
This completes the proof of Theorem \ref{mainthm:4}.
\end{proof}

\section{Proofs of Theorems \ref{mainthm:end1} and \ref{mainthm:end2}}

\begin{proof}[Proof of Theorem $\ref{mainthm:end1}$]
Let $f\in\mathcal M^{p,\theta}(w^p,w^q)$ with $1<p,q<\infty$ and $w\in A_{p,q}$. For any given ball $B=B(x_0,r_B)$ in $\mathbb R^n$, it suffices to prove that the following inequality
\begin{equation}\label{end1.1}
\frac{1}{|B|}\int_B|I_{\alpha}f(x)-(I_{\alpha}f)_B|\,dx\leq C\big\|f\big\|_{\mathcal L^{p,\kappa}(w^p,w^q)}
\end{equation}
holds. Decompose $f$ as $f=f_1+f_2$, where $f_1=f\cdot\chi_{4B}$, $f_2=f\cdot\chi_{(4B)^c}$, $4B=B(x_0,4r_B)$. By the linearity of the fractional integral operator $I_{\alpha}$, the left-hand side of \eqref{end1.1} can be divided into two parts. That is,
\begin{equation*}
\begin{split}
&\frac{1}{|B|}\int_B|I_{\alpha}f(x)-(I_{\alpha}f)_B|\,dx\\
&\leq \frac{1}{|B|}\int_B|I_{\alpha}f_1(x)-(I_{\alpha}f_1)_B|\,dx+\frac{1}{|B|}\int_B|I_{\alpha}f_2(x)-(I_{\alpha}f_2)_B|\,dx\\
&:=I+II.
\end{split}
\end{equation*}
First let us consider the term $I$. Applying the weighted $(L^p,L^q)$-boundedness of $I_{\alpha}$ (see Theorem \ref{strong}) and H\"older's inequality, we obtain
\begin{equation*}
\begin{split}
I&\leq\frac{2}{|B|}\int_B|I_{\alpha}f_1(x)|\,dx\\
&\leq\frac{2}{|B|}\bigg(\int_B|I_{\alpha}f_1(x)|^qw^q(x)\,dx\bigg)^{1/q}\bigg(\int_B w(x)^{-q'}dx\bigg)^{1/{q'}}\\
&\leq\frac{C}{|B|}\bigg(\int_{4B}|f(x)|^pw^p(x)\,dx\bigg)^{1/p}\bigg(\int_B w(x)^{-q'}dx\bigg)^{1/{q'}}\\
&\leq C\big\|f\big\|_{\mathcal L^{p,\kappa}(w^p,w^q)}
\cdot\frac{w^q(4B)^{{\kappa}/p}}{|B|}\bigg(\int_B w(x)^{-q'}dx\bigg)^{1/{q'}}.
\end{split}
\end{equation*}
Since $w$ is a weight in the class $A_{p,q}$, one has $w^q\in A_q\subset A_{\infty}$ by Lemma \ref{relation}$(i)$. By definition, it reads
\begin{equation*}
\left(\frac1{|B|}\int_B w^q(x)\,dx\right)^{1/q}\left(\frac1{|B|}\int_B [w^q(x)]^{-q'/q}\,dx\right)^{1/{q'}}\leq C,
\end{equation*}
which implies
\begin{equation}\label{end1.2}
\bigg(\int_B w(x)^{-q'}dx\bigg)^{1/{q'}}\leq C\cdot\frac{|B|}{w^q(B)^{1/q}}.
\end{equation}
Since $w^q\in A_q\subset A_{\infty}$, then $w^q\in\Delta_2$. Using the inequalities \eqref{end1.2} and \eqref{weights} and noting the fact that $\kappa=p/q$, we have
\begin{equation*}
\begin{split}
I&\leq C\big\|f\big\|_{\mathcal L^{p,\kappa}(w^p,w^q)}\cdot\frac{w^q(4B)^{1/q}}{w^q(B)^{1/q}}\\
&\leq C\big\|f\big\|_{\mathcal L^{p,\kappa}(w^p,w^q)}.
\end{split}
\end{equation*}
Now we estimate $II$. For any $x\in B$,
\begin{equation*}
\begin{split}
|I_{\alpha}f_2(x)-(I_{\alpha}f_2)_B|&=\bigg|\frac{1}{|B|}\int_B\big[I_{\alpha}f_2(x)-I_{\alpha}f_2(y)\big]\,dy\bigg|\\
&=\bigg|\frac{1}{|B|}\int_B\bigg\{\int_{(4B)^c}\bigg[\frac{1}{|x-z|^{n-\alpha}}-\frac{1}{|y-z|^{n-\alpha}}\bigg]f(z)\,dz\bigg\}dy\bigg|\\
&\leq\frac{1}{|B|}\int_B\bigg\{\int_{(4B)^c}\bigg|\frac{1}{|x-z|^{n-\alpha}}-\frac{1}{|y-z|^{n-\alpha}}\bigg|\cdot|f(z)|\,dz\bigg\}dy.
\end{split}
\end{equation*}
Since both $x$ and $y$ are in $B$, $z\in(4B)^c$, by a purely geometric observation, we must have $|x-z|\geq 2|x-y|$. This fact along with the mean value theorem yields
\begin{align}\label{average}
|I_{\alpha}f_2(x)-(I_{\alpha}f_2)_B|
&\leq\frac{C}{|B|}\int_B\bigg\{\int_{(4B)^c}\frac{|x-y|}{|x-z|^{n-\alpha+1}}\cdot|f(z)|\,dz\bigg\}dy\notag\\
&\leq C\int_{(4B)^c}\frac{r_B}{|z-x_0|^{n-\alpha+1}}\cdot|f(z)|\,dz\notag\\
&\leq C\sum_{j=2}^\infty\frac{1}{2^j}\cdot\frac{1}{|2^{j+1}B|^{1-{\alpha}/n}}\int_{2^{j+1}B}|f(z)|\,dz.
\end{align}
Furthermore, by using H\"older's inequality and $A_{p,q}$ condition on $w$, we get for any $x\in B$,
\begin{align}\label{end1.3}
|I_{\alpha}f_2(x)-(I_{\alpha}f_2)_B|&\leq C\sum_{j=2}^\infty\frac{1}{2^j}\cdot\frac{1}{|2^{j+1}B|^{1-{\alpha}/n}}\notag\\
&\times\bigg(\int_{2^{j+1}B}\big|f(y)\big|^pw^p(y)\,dy\bigg)^{1/p}
\left(\int_{2^{j+1}B}w(y)^{-p'}dy\right)^{1/{p'}}\notag\\
&\leq C\big\|f\big\|_{\mathcal L^{p,\kappa}(w^p,w^q)}\cdot
\sum_{j=2}^\infty\frac{1}{2^j}\cdot\frac{w^q(2^{j+1}B)^{{\kappa}/p}}{w^q(2^{j+1}B)^{1/q}}\notag\\
&=C\big\|f\big\|_{\mathcal L^{p,\kappa}(w^p,w^q)}\cdot
\sum_{j=2}^\infty\frac{1}{2^j}\notag\\
&\leq C\big\|f\big\|_{\mathcal L^{p,\kappa}(w^p,w^q)}.
\end{align}
From the pointwise estimate \eqref{end1.3}, it readily follows that
\begin{equation*}
II=\frac{1}{|B|}\int_B|I_{\alpha}f_2(x)-(I_{\alpha}f_2)_B|\,dx\leq C\big\|f\big\|_{\mathcal L^{p,\kappa}(w^p,w^q)}.
\end{equation*}
By combining the above estimates for $I$ and $II$, we are done.
\end{proof}

\begin{proof}[Proof of Theorem $\ref{mainthm:end2}$]
Let $f\in\mathcal L^{p,\Theta}(\mathbb R^n)$ with $1<p<\infty$. For any given ball $B=B(x_0,r_B)$ in $\mathbb R^n$, it is sufficient to prove that the following inequality
\begin{equation}\label{end2.1}
\frac{1}{|B(x_0,r_B)|}\int_{B(x_0,r_B)}|I_{\alpha}f(x)-(I_{\alpha}f)_B|\,dx\leq C\big\|f\big\|_{\mathcal L^{p,\Theta}(\mathbb R^n)}
\end{equation}
holds. Decompose $f$ as $f=f_1+f_2$, where $f_1=f\cdot\chi_{4B}$, $f_2=f\cdot\chi_{(4B)^c}$, $4B=B(x_0,4r_B)$. As in the proof of Theorem \ref{mainthm:end1}, we can also divide the left-hand side of \eqref{end2.1} into two parts. That is,
\begin{equation*}
\begin{split}
&\frac{1}{|B(x_0,r_B)|}\int_{B(x_0,r_B)}|I_{\alpha}f(x)-(I_{\alpha}f)_B|\,dx\\
&\leq\frac{1}{|B(x_0,r_B)|}\int_{B(x_0,r_B)}|I_{\alpha}f_1(x)-(I_{\alpha}f_1)_B|\,dx
+\frac{1}{|B(x_0,r_B)|}\int_{B(x_0,r_B)}|I_{\alpha}f_2(x)-(I_{\alpha}f_2)_B|\,dx\\
&:=I'+II'.
\end{split}
\end{equation*}
First let us consider the term $I'$. Since $I_{\alpha}$ is bounded from $L^p(\mathbb R^n)$ to $L^q(\mathbb R^n)$, then by H\"older's inequality, we obtain
\begin{equation*}
\begin{split}
I'&\leq\frac{2}{|B(x_0,r_B)|}\int_{B(x_0,r_B)}|I_{\alpha}f_1(x)|\,dx\\
&\leq\frac{2}{|B(x_0,r_B)|}\bigg(\int_{B(x_0,r_B)}|I_{\alpha}f_1(x)|^q\,dx\bigg)^{1/q}\bigg(\int_{B(x_0,r_B)}1^{q'}dx\bigg)^{1/{q'}}\\
&\leq\frac{C}{|B(x_0,r_B)|}\bigg(\int_{B(x_0,4r_B)}|f(x)|^p\,dx\bigg)^{1/p}|B(x_0,r_B)|^{1/{q'}}\\
&\leq C\big\|f\big\|_{\mathcal L^{p,\Theta}(\mathbb R^n)}
\cdot\frac{\Theta(4r_B)^{1/p}}{|B(x_0,r_B)|^{1/q}}.
\end{split}
\end{equation*}
Applying our assumption \eqref{assump} on $\Theta$, we further have
\begin{equation*}
I'\leq C\big\|f\big\|_{\mathcal L^{p,\Theta}(\mathbb R^n)}
\cdot\frac{(4r_B)^{n/q}}{|B(x_0,r_B)|^{1/q}}\leq C\big\|f\big\|_{\mathcal L^{p,\Theta}(\mathbb R^n)}.
\end{equation*}
On the other hand, in Theorem \ref{mainthm:end1}, we have already shown that for any $x\in B$ (see \eqref{average})
\begin{equation*}
|I_{\alpha}f_2(x)-(I_{\alpha}f_2)_B|\leq C\sum_{j=2}^\infty\frac{1}{2^j}
\cdot\frac{1}{|B(x_0,2^{j+1}r_B)|^{1-{\alpha}/n}}\int_{B(x_0,2^{j+1}r_B)}|f(z)|\,dz.
\end{equation*}
Moreover, by using H\"older's inequality and the assumption \eqref{assump} on $\Theta$, we can deduce that
\begin{equation*}
\begin{split}
&|I_{\alpha}f_2(x)-(I_{\alpha}f_2)_B|\\
\leq&C\sum_{j=2}^\infty\frac{1}{2^j}\cdot\frac{1}{|B(x_0,2^{j+1}r_B)|^{1-{\alpha}/n}}
\bigg(\int_{B(x_0,2^{j+1}r_B)}|f(z)|^p\,dz\bigg)^{1/p}|B(x_0,2^{j+1}r_B)|^{1/{p'}}\\
\leq& C\big\|f\big\|_{\mathcal L^{p,\Theta}(\mathbb R^n)}\times\sum_{j=2}^\infty\frac{1}{2^j}
\cdot\frac{\Theta(2^{j+1}r_B)^{1/p}}{|B(x_0,2^{j+1}r_B)|^{1/p-{\alpha}/n}}\\
\leq& C\big\|f\big\|_{\mathcal L^{p,\Theta}(\mathbb R^n)}\times\sum_{j=2}^\infty\frac{1}{2^j}
\cdot\frac{(2^{j+1}r_B)^{n/q}}{|B(x_0,2^{j+1}r_B)|^{1/q}}\\
\leq& C\big\|f\big\|_{\mathcal L^{p,\Theta}(\mathbb R^n)}.
\end{split}
\end{equation*}
Therefore,
\begin{equation*}
\begin{split}
II'=\frac{1}{|B(x_0,r_B)|}\int_{B(x_0,r_B)}|I_{\alpha}f_2(x)-(I_{\alpha}f_2)_B|\,dx\leq C\big\|f\big\|_{\mathcal L^{p,\Theta}(\mathbb R^n)}.
\end{split}
\end{equation*}
By combining the above estimates for $I'$ and $II'$, we are done.
\end{proof}

\section{Partial results on two-weight problems}

In the last section, we consider related problems about two-weight, weak type norm inequalities for $I_{\alpha}$ and $[b,I_{\alpha}]$. In \cite{cruz2}, Cruz-Uribe and P\'erez considered the problem of finding sufficient conditions on a pair of weights $(u,v)$ which ensure the boundedness of the operator $I_{\alpha}$ from $L^p(v)$ to $WL^p(u)$, where $1<p<\infty$. They gave a sufficient $A_p$-type condition (see \eqref{assump1.1} below), and proved a two-weight, weak-type $(p,p)$ inequality for $I_{\alpha}$(see also \cite{cruz3} for another, more simpler proof), which solved a problem posed by Sawyer and Wheeden in \cite{sawyer}.
\begin{theorem}[\cite{cruz2,cruz3}]\label{Two1}
Let $0<\alpha<n$ and $1<p<\infty$. Given a pair of weights $(u,v)$, suppose that for some $r>1$ and for all cubes $Q$,
\begin{equation}\label{assump1.1}
\big|Q\big|^{\alpha/n}\cdot\left(\frac{1}{|Q|}\int_Q u(x)^r\,dx\right)^{1/{(rp)}}\left(\frac{1}{|Q|}\int_Q v(x)^{-p'/p}\,dx\right)^{1/{p'}}\leq C<\infty.
\end{equation}
Then the fractional integral operator $I_\alpha$ satisfies the weak-type $(p,p)$ inequality
\begin{equation}\label{assump1.2}
u\big(\big\{x\in\mathbb R^n:\big|I_\alpha f(x)\big|>\sigma\big\}\big)
\leq \frac{C}{\sigma^p}\int_{\mathbb R^n}|f(x)|^p v(x)\,dx,\quad\mbox{for any }~\sigma>0,
\end{equation}
where $C$ does not depend on $f$ and $\sigma>0$.
\end{theorem}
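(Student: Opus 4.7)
The plan is to follow the Calder\'on--Zygmund/stopping-time strategy of Cruz-Uribe and P\'erez. We may assume $f\geq 0$ is bounded with compact support. The first step is a dyadic discretization: using the standard pointwise bound
\begin{equation*}
I_\alpha f(x)\leq C\sum_{k\in\mathbb Z}2^{k\alpha}\frac{1}{|B(x,2^k)|}\int_{B(x,2^k)}f(y)\,dy,
\end{equation*}
and the shifted-lattice trick, it suffices to prove the analogous weak-type bound for a dyadic model, so that the relevant averages become quantities of the form $|Q|^{\alpha/n-1}\int_Q f$ over cubes $Q$ belonging to a fixed dyadic grid $\mathcal D$.

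The central step is a stopping-time decomposition of the level set. Fix $\sigma>0$ and let $E_\sigma:=\{x:I_\alpha f(x)>\sigma\}$. By a scale-by-scale stopping construction (iterating the dyadic maximal stopping time and summing geometrically) we produce a family $\mathcal S$ of pairwise disjoint dyadic cubes covering $E_\sigma$ up to a null set, such that on each $Q\in\mathcal S$ the stopping inequality
\begin{equation*}
\frac{1}{|Q|^{1-\alpha/n}}\int_Q f(y)\,dy>c\sigma
\end{equation*}
holds for a small absolute constant $c>0$.

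On each stopping cube $Q\in\mathcal S$, H\"older's inequality with exponents $p,p'$ applied to the stopping inequality gives
\begin{equation*}
\sigma^p<\frac{C}{|Q|^{p(1-\alpha/n)}}\left(\int_Q f^p v\,dy\right)\left(\int_Q v^{-p'/p}\,dy\right)^{p/p'},
\end{equation*}
while H\"older's inequality with exponents $r,r'$ yields $u(Q)\leq|Q|^{1/r'}\bigl(\int_Q u^r\,dy\bigr)^{1/r}$. Multiplying the two estimates and invoking \eqref{assump1.1} raised to the $p$-th power, a direct exponent check (the factors $|Q|^{p\alpha/n}$, $|Q|^{1/r'}$, $|Q|^{-1/r}$ and $|Q|^{-p/p'}$ cancel) collapses the cube-size factors and produces the per-cube estimate
\begin{equation*}
u(Q)\leq\frac{C}{\sigma^p}\int_Q f^p v\,dy.
\end{equation*}
Summing over the disjoint family $\mathcal S$ delivers the weak-type inequality \eqref{assump1.2}.

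The main obstacle is the stopping-time step: for the Hardy--Littlewood maximal function this construction is standard, but $I_\alpha$ (even after dyadic reduction) is an infinite scale-sum rather than a single supremum, so producing a genuinely disjoint family that covers $E_\sigma$ requires a bootstrap across dyadic scales combined with the decay of $|Q|^{\alpha/n-1}$ for large ancestors (which is where $\alpha<n$ is essential). The role of the extra integrability exponent $r>1$ in \eqref{assump1.1} is equally delicate: it is precisely the slack needed so that the $|Q|^{1/r'}$ produced by H\"older on $u$ can be absorbed alongside the fractional scaling $|Q|^{p\alpha/n}$ in the exponent accounting above. Without $r>1$, the Sawyer--Wheeden counterexample shows the weak-type bound can fail, so exploiting this slack is the crux of the argument.
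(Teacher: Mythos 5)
Your per-cube exponent check is correct: combining the stopping inequality with H\"older in exponents $p,p'$ for the $f$-$v$ pair and in exponents $r,r'$ for $u$, the cube-size factors $|Q|^{1-1/r-p(1-\alpha/n)+1/r+p/p'-p\alpha/n}$ collapse to $|Q|^0$ once \eqref{assump1.1} is raised to the $p$-th power, giving $u(Q)\leq C\sigma^{-p}\int_Q f^pv$. That part is sound. But the paper does not actually prove this theorem; it is quoted from \cite{cruz2,cruz3}, so the comparison here is with those references rather than with an argument in the text.

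The genuine gap is the stopping-time covering. You claim a family $\mathcal S$ of pairwise disjoint dyadic cubes that \emph{covers} $E_\sigma=\{I_\alpha f>\sigma\}$ up to a null set, with $|Q|^{\alpha/n-1}\int_Q f>c\sigma$ on each $Q\in\mathcal S$ for an absolute $c$. Such a covering is equivalent to the pointwise inclusion $E_\sigma\subset\{M_\alpha^d f>c\sigma\}$, i.e.\ to $I_\alpha^d f\lesssim M_\alpha^d f$ a.e., and this is \emph{false}. A concrete obstruction: place bumps of mass $2^{k(n-\alpha)}$ at distance $2^k$ from the origin for $k=1,\dots,N$. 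Then every dyadic cube $Q\ni 0$ has $|Q|^{\alpha/n-1}\int_Q f\lesssim 1$, so $M_\alpha^d f(0)\lesssim 1$, while $I_\alpha f(0)\sim N$ since each annular bump contributes $\sim 1$. Hence for $\sigma\sim N/2$ the point $0$ lies in $E_\sigma$ but in no admissible stopping cube, no matter how small the fixed $c$. The ``decay of $|Q|^{\alpha/n-1}$ for large ancestors'' that you invoke controls the tail of the scale-sum but does nothing to prevent a long run of intermediate scales each contributing a small fraction of $\sigma$, which is exactly what the example exploits.

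The cited proofs avoid this trap by not attempting to cover $\{I_\alpha f>\sigma\}$ with stopping cubes of $I_\alpha$ at all. Cruz-Uribe and P\'erez \cite{cruz2} run a Calder\'on--Zygmund decomposition on $f$ itself (at a height tied to $\sigma$), treat the good part by a strong-type estimate derived from \eqref{assump1.1}, and treat the bad part using cancellation of $I_\alpha$ against mean-zero pieces; the extra exponent $r>1$ enters through a reverse-H\"older/Carleson-type summation across the generations of the decomposition, not merely through a per-cube H\"older step. Cruz-Uribe's shorter proof \cite{cruz3} takes a different route, passing through a sharp two-weight estimate for the dyadic fractional \emph{maximal} operator and then controlling $I_\alpha$ by $M_\alpha$ in the appropriate \emph{norm} sense (not pointwise), again using $r>1$ globally. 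In both cases the disjoint stopping cubes belong to the maximal operator or to the CZ decomposition of $f$, and the passage from there to $I_\alpha$ is the substantive step that your sketch is missing. Your proposal, as written, proves only the weak type $(p,p)$ bound for $M_\alpha^d$, which is strictly weaker than the theorem.
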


Moreover, in \cite{li}, Li improved this result by replacing the ``power bump" in \eqref{assump1.1} by a smaller ``Orlicz bump". On the other hand, in \cite{liu}, Liu and Lu obtained a sufficient $A_p$-type condition for the commutator $[b,I_{\alpha}]$ to satisfy the two-weight weak type $(p,p)$ inequality, where $1<p<\infty$. That condition is an $A_p$-type condition in the scale of Orlicz spaces (see \eqref{assump2.1} below).
\begin{theorem}[\cite{liu}]\label{Two2}
Let $0<\alpha<n$, $1<p<\infty$ and $b\in BMO(\mathbb R^n)$. Given a pair of weights $(u,v)$, suppose that for some $r>1$ and for all cubes $Q$,
\begin{equation}\label{assump2.1}
\big|Q\big|^{\alpha/n}\cdot\left(\frac{1}{|Q|}\int_Q u(x)^r\,dx\right)^{1/{(rp)}}\big\|v^{-1/p}\big\|_{\mathcal A,Q}\leq C<\infty,
\end{equation}
where $\mathcal A(t)=t^{p'}(1+\log^+t)^{p'}$. Then the linear commutator $[b,I_\alpha]$ satisfies the weak-type $(p,p)$ inequality
\begin{equation}\label{assump2.2}
u\big(\big\{x\in\mathbb R^n:\big|[b,I_\alpha](f)(x)\big|>\sigma\big\}\big)
\leq \frac{C}{\sigma^p}\int_{\mathbb R^n}|f(x)|^p v(x)\,dx,\quad\mbox{for any }~\sigma>0,
\end{equation}
where $C$ does not depend on $f$ and $\sigma>0$.
\end{theorem}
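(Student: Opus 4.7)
My plan is to mimic the proof of Theorem \ref{Two1} for the fractional integral and absorb the extra logarithmic factor coming from the $BMO$ symbol $b$ into the stronger Orlicz bump $\mathcal A(t)=t^{p'}(1+\log^+t)^{p'}$. Fix $f\ge 0$ and $\sigma>0$, and apply the Calder\'on-Zygmund decomposition of $|f|$ at level $\sigma$: this yields a disjoint family of dyadic cubes $\{Q_j\}$ with $\sigma<|Q_j|^{-1}\int_{Q_j}|f|\le 2^n\sigma$ and $|f|\le\sigma$ off $\Omega:=\bigcup_j Q_j$. Write $f=g+h$ where $g$ is the standard good function and $h=\sum h_j$ with each $h_j$ supported in $Q_j$ and $\int h_j=0$; set $\Omega^*=\bigcup 2Q_j$.

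By linearity and Chebyshev, $u(\{|[b,I_\alpha]f|>\sigma\})$ breaks into three pieces: $u(\Omega^*)$, the good-part level set, and the bad-part level set restricted to $(\Omega^*)^c$. For $u(\Omega^*)$ one applies the generalized H\"older inequality \eqref{three} on each $Q_j$ to the identity $\sigma<|Q_j|^{-1}\int_{Q_j}f\cdot v^{1/p}\cdot v^{-1/p}$; invoking the bump \eqref{assump2.1} produces $u(Q_j)^{1/p}\lesssim \sigma^{-1}(\int_{Q_j}|f|^p v)^{1/p}$, and summing over the disjoint $Q_j$ gives the desired control. For the good part one exploits $|g|\lesssim\sigma$ together with a strong-type control of $[b,I_\alpha]$ (Theorem \ref{cstrong} in the one-weight case, or a pointwise $\|b\|_*\cdot M_{L\log L,\alpha}$ bound of P\'erez) to reduce everything to $\|g\|_{L^p(v)}^p\lesssim\|f\|_{L^p(v)}^p$, which again follows from Orlicz H\"older on each $Q_j$.

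For the bad-part piece, use the commutator identity
\begin{equation*}
[b,I_\alpha]h_j(x)=(b(x)-b_{Q_j})I_\alpha h_j(x)-I_\alpha\big((b-b_{Q_j})h_j\big)(x)
\end{equation*}
and the zero-mean cancellation of $h_j$ to obtain, for $x\notin 2Q_j$, the smoothness gain $|I_\alpha h_j(x)|\lesssim \ell(Q_j)|x-c_{Q_j}|^{\alpha-n-1}\int_{Q_j}|f-f_{Q_j}|$. Plugging this into the $u$-integral and performing a dyadic decomposition over shells $2^{k+1}Q_j\setminus 2^k Q_j$ around each cube leads, via the three-function H\"older inequality \eqref{three} with the factorization $\mathcal A^{-1}(t)\cdot\bar\Phi^{-1}(t)\lesssim t^{1/p'}$, to bounds of the form
\begin{equation*}
\sigma^{-1}\sum_j\sum_{k\ge 1}2^{-k}(k+1)\,\big\|f\chi_{2^{k+1}Q_j}\big\|_{L^p(v)}\cdot u(2^{k+1}Q_j)^{1/p}.
\end{equation*}
Here the bump \eqref{assump2.1} applied at the scale $2^{k+1}Q_j$ absorbs the $v^{-1/p}$ factor, while Lemma \ref{BMO}$(i)$ together with \eqref{Jensen} accounts for the $\|b-b_{Q_j}\|_{\exp L,2^{k+1}Q_j}\lesssim (k+1)\|b\|_*$ contribution.

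The main obstacle is verifying that $\mathcal A(t)=t^{p'}(1+\log^+t)^{p'}$ is exactly the right companion to the $\exp L$ behaviour of $b-b_Q$ in the three-function H\"older inequality, and that the resulting sum in $j$ and $k$ converges despite the $(k+1)$ loss from Lemma \ref{BMO}$(i)$. Convergence is obtained by exploiting the $r>1$ slack in the power bump $(u^r)^{1/(rp)}$: reverse H\"older turns that slack into a strict power decay $2^{-\delta k}$ that defeats the $(k+1)2^{-k}$ growth, and the disjointness of the $Q_j$ together with the $L^p(v)$ normalization of $f$ collapses the sum over $j$.
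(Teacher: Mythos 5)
Before evaluating the details: the paper does not actually prove Theorem \ref{Two2} at all. It is stated with the attribution ``[\cite{liu}]'' and cited as a black box (Liu and Lu), exactly as Theorem \ref{Two1} is cited from Cruz-Uribe and P\'erez. So there is no ``paper's own proof'' to compare against; your task was really to supply a proof that the paper defers to the literature.

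With that said, the proposal as written has a genuine scaling gap in the very first estimate, the one for $u(\Omega^*)$. You perform a Calder\'on--Zygmund decomposition of $|f|$ at level $\sigma$, so on each stopping cube $\sigma<|Q_j|^{-1}\int_{Q_j}|f|$, and then claim that H\"older plus the bump condition yields $u(Q_j)^{1/p}\lesssim\sigma^{-1}\bigl(\int_{Q_j}|f|^pv\bigr)^{1/p}$. But tracing this through carefully, H\"older gives
\begin{equation*}
\sigma<\left(\frac{1}{|Q_j|}\int_{Q_j}|f|^pv\right)^{1/p}\left(\frac{1}{|Q_j|}\int_{Q_j}v^{-p'/p}\right)^{1/p'},
\end{equation*}
and the bump condition \eqref{assump2.1} (or already \eqref{assump1.1}) bounds the second factor only by $C|Q_j|^{-\alpha/n}\bigl(|Q_j|^{-1}\int_{Q_j}u^r\bigr)^{-1/(rp)}$. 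Substituting and using $u(Q_j)\le|Q_j|\bigl(|Q_j|^{-1}\int_{Q_j}u^r\bigr)^{1/r}$ one arrives at $u(Q_j)^{1/p}\lesssim\sigma^{-1}|Q_j|^{-\alpha/n}\bigl(\int_{Q_j}|f|^pv\bigr)^{1/p}$, with a loose factor $|Q_j|^{-\alpha/n}$ that blows up on small cubes. The $|Q|^{\alpha/n}$ in the hypothesis is there to absorb the gain of $\alpha$ derivatives that $I_\alpha$ produces when it acts locally; the set $\Omega^*$ is estimated without invoking $I_\alpha$ at all, so there is nothing to cancel that factor. The standard way around this (and, as far as I can reconstruct, the approach taken in \cite{cruz2,cruz3,liu}) is not a na\"ive CZ decomposition of $|f|$ at height $\sigma$; one either works through the fractional maximal function $M_\alpha$ (which has a two-weight weak-type theory due to Sawyer, and dominates $I_\alpha$ in a suitable averaged sense), or through a good-$\lambda$/sharp-maximal-function argument built on the pointwise bound of P\'erez involving $M_{\alpha,L\log L}$, precisely so that the $|Q|^{\alpha/n}$ factor is consumed by the operator itself.

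A second, subsidiary issue is in the bad part. You invoke ``zero-mean cancellation of $h_j$'' to get the extra $|x-c_{Q_j}|^{-1}$ decay, but that only helps the term $(b(x)-b_{Q_j})I_\alpha h_j(x)$. The other term $I_\alpha\bigl((b-b_{Q_j})h_j\bigr)$ has no cancellation, since $(b-b_{Q_j})h_j$ does not have mean zero, and this is exactly the term that forces the $L\log L$ bump in \eqref{assump2.1} rather than the plain power bump in \eqref{assump1.1}. Your writeup does not address how this term is controlled, and the appeal to \eqref{three} with the factorization $\mathcal A^{-1}\cdot\bar\Phi^{-1}\lesssim t^{1/p'}$ is asserted rather than verified. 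These two gaps need to be closed before the proposal can be considered a proof; as it stands the architecture is plausible in outline but the key quantitative steps do not go through.
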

Here and in what follows, all cubes are assumed to have their sides parallel to the coordinate axes, $Q(x_0,\ell)$ will denote the cube centered at $x_0$ and has side length $\ell$. For any cube $Q(x_0,\ell)$ and any $\lambda>0$, we denote by $\lambda Q$ the cube with the same center as $Q$ whose side length is $\lambda$ times that of $Q$, i.e., $\lambda Q:=Q(x_0,\lambda\ell)$. We now extend the results mentioned above to the Morrey type spaces associated to $\theta$.

\begin{theorem}\label{mainthm:5}
Let $0<\alpha<n$ and $1<p<\infty$. Given a pair of weights $(u,v)$, suppose that for some $r>1$ and for all cubes $Q$, \eqref{assump1.1} holds.
If $\theta$ satisfies the $\mathcal D_\kappa$ condition $(\ref{D condition})$ with $0\leq\kappa<1$ and $u\in \Delta_2$, then the fractional integral operator $I_{\alpha}$ is bounded from $\mathcal M^{p,\theta}(v,u)$ into $W\mathcal M^{p,\theta}(u)$.
\end{theorem}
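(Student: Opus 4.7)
The plan is to mirror the proof of Theorem~\ref{mainthm:2}. First I fix $f\in\mathcal M^{p,\theta}(v,u)$, an arbitrary cube $Q=Q(x_0,\ell)\subset\mathbb R^n$, and $\sigma>0$, and decompose $f=f_1+f_2$ with $f_1=f\cdot\chi_{2Q}$, $f_2=f\cdot\chi_{(2Q)^c}$. By the linearity of $I_\alpha$, the quantity
\[
\frac{\sigma}{\theta(u(Q))^{1/p}}\cdot \bigl[u\bigl(\bigl\{x\in Q:|I_\alpha(f)(x)|>\sigma\bigr\}\bigr)\bigr]^{1/p}
\]
will be bounded by $K_1+K_2$, where $K_i$ is the analogous quantity for $f_i$ with $\sigma/2$ in place of $\sigma$. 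It then suffices to show that $K_1,K_2\le C\|f\|_{\mathcal M^{p,\theta}(v,u)}$ and take suprema over $Q$ and $\sigma>0$.

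For the local piece $K_1$, I would invoke Theorem~\ref{Two1} directly on $f_1$, which is applicable because the pair $(u,v)$ satisfies \eqref{assump1.1}, to get $\sigma\cdot u(\{|I_\alpha f_1|>\sigma/2\})^{1/p}\le C\bigl(\int_{2Q}|f|^p v\bigr)^{1/p}$. Bounding the right-hand side by $C\|f\|_{\mathcal M^{p,\theta}(v,u)}\theta(u(2Q))^{1/p}$ and then combining the $\mathcal D_\kappa$ inequality with the doubling hypothesis $u\in\Delta_2$ yields $\theta(u(2Q))/\theta(u(Q))\le C(u(2Q)/u(Q))^\kappa\le C$, and hence $K_1\le C\|f\|_{\mathcal M^{p,\theta}(v,u)}$. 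This will be the only step in which the $\Delta_2$ hypothesis is used.

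For the global piece $K_2$, I would use the pointwise estimate \eqref{pointwise1} together with Chebyshev's inequality to reduce matters to bounding
\[
\frac{u(Q)^{1/p}}{\theta(u(Q))^{1/p}}\sum_{j=1}^\infty\frac{1}{|2^{j+1}Q|^{1-\alpha/n}}\int_{2^{j+1}Q}|f(y)|\,dy.
\]
Applying H\"older's inequality in the form $\int_{2^{j+1}Q}|f|\le(\int|f|^p v)^{1/p}(\int v^{-p'/p})^{1/p'}$, estimating the first factor via the Morrey norm of $f$, and inserting the assumption \eqref{assump1.1} on the second factor, an elementary manipulation of exponents should yield
\[
\frac{1}{|2^{j+1}Q|^{1-\alpha/n}}\int_{2^{j+1}Q}|f|\le C\|f\|_{\mathcal M^{p,\theta}(v,u)}\cdot\frac{\theta(u(2^{j+1}Q))^{1/p}}{|2^{j+1}Q|^{1/(r'p)}\bigl(\int_{2^{j+1}Q}u^r\bigr)^{1/(rp)}}.
\]

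The main obstacle is establishing convergence of the resulting series without the $A_\infty$ comparability \eqref{compare}, since here $u$ is only assumed to lie in $\Delta_2$. My remedy is a two-stage H\"older argument: first apply $\mathcal D_\kappa$ (with $0\le\kappa<1$) to extract a factor $(u(2^{j+1}Q)/u(Q))^{\kappa/p}$, and then bound $u(2^{j+1}Q)^{\kappa/p}$ via H\"older's inequality $u(2^{j+1}Q)\le|2^{j+1}Q|^{1/r'}(\int u^r)^{1/r}$. Each summand will then collapse to
\[
C\cdot u(Q)^{(1-\kappa)/p}\cdot|2^{j+1}Q|^{(\kappa-1)/(r'p)}\cdot\Bigl(\int_{2^{j+1}Q}u^r\Bigr)^{(\kappa-1)/(rp)}.
\]
Since $\kappa-1<0$, the identity $|2^{j+1}Q|=2^{n(j+1)}|Q|$ will furnish the geometric decay $2^{n(j+1)(\kappa-1)/(r'p)}$, while the monotonicity $\int_{2^{j+1}Q}u^r\ge\int_Q u^r$ together with a final use of H\"older $u(Q)\le|Q|^{1/r'}(\int_Q u^r)^{1/r}$ will cancel the residual $u(Q)^{(1-\kappa)/p}$ factor up to a constant. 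Summing the resulting convergent geometric series gives $K_2\le C\|f\|_{\mathcal M^{p,\theta}(v,u)}$, and taking suprema over $Q$ and $\sigma>0$ will complete the proof.
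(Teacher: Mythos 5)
Your proof is correct, and the overall architecture matches the paper's: same decomposition $f=f_1+f_2$, same invocation of Theorem~\ref{Two1} for the local piece, same pointwise estimate \eqref{pointwise1} plus Chebyshev and H\"older for the tail. The difference is in how you close the tail series, and it is a genuine one. The paper first applies \eqref{comparethe} and \eqref{U} to reduce $K_2$ to
\[
C\,\|f\|_{\mathcal M^{p,\theta}(v,u)}\sum_{j\geq 1}\frac{u(Q)^{(1-\kappa)/p}}{u(2^{j+1}Q)^{(1-\kappa)/p}},
\]
and then invokes the \emph{reverse} doubling property of $u\in\Delta_2$ (via Komori's Lemma~4.1) to get geometric decay of the ratio $u(Q)/u(2^{j+1}Q)$. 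You instead feed the $A_p$-type condition \eqref{assump1.1} into the H\"older estimate earlier, so that the summand is expressed in terms of $|2^{j+1}Q|$ and $\int_{2^{j+1}Q}u^r$ directly; then the decay is supplied by the bare volume factor $|2^{j+1}Q|^{(\kappa-1)/(r'p)}=2^{n(j+1)(\kappa-1)/(r'p)}|Q|^{(\kappa-1)/(r'p)}$ (which decays geometrically because $\kappa<1$ and $r'<\infty$), while the monotonicity of $\int u^r$ and a final application of H\"older's inequality $u(Q)\leq|Q|^{1/r'}\bigl(\int_Q u^r\bigr)^{1/r}$ cancel the residual $Q$-dependent prefactor. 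I checked the exponent bookkeeping and it closes. A modest bonus of your route is that the tail estimate does not use the doubling hypothesis at all: $u\in\Delta_2$ is needed only for the local piece $K_1$, to control $\theta(u(2Q))/\theta(u(Q))$; the paper by contrast needs doubling (via reverse doubling) in both halves. This does not enlarge the theorem's hypotheses, but it is a cleaner accounting of where the assumption bites.
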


\begin{theorem}\label{mainthm:6}
Let $0<\alpha<n$, $1<p<\infty$ and $b\in BMO(\mathbb R^n)$. Given a pair of weights $(u,v)$, suppose that for some $r>1$ and for all cubes $Q$, \eqref{assump2.1} holds. If $\theta$ satisfies the $\mathcal D_\kappa$ condition $(\ref{D condition})$ with $0\leq\kappa<1$ and $u\in A_\infty$, then the linear commutator $[b,I_{\alpha}]$ is bounded from $\mathcal M^{p,\theta}(v,u)$ into $W\mathcal M^{p,\theta}(u)$.
\end{theorem}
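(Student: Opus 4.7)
The plan is to follow the two-part decomposition used in the proofs of Theorems \ref{mainthm:3} and \ref{mainthm:4}, replacing the weighted strong type bound by the two-weight weak-type estimate of Theorem \ref{Two2} for the local piece, and using the Orlicz bump condition \eqref{assump2.1} in place of the $A_{p,q}$ condition for the global piece. Fix a cube $Q=Q(x_0,\ell)\subset\mathbb R^n$ and $\sigma>0$, split $f=f_1+f_2$ with $f_1=f\cdot\chi_{2Q}$, and by subadditivity of the weak-type level-set measure it suffices to bound each of
\begin{equation*}
\mathrm{I}_i:=\frac{\sigma}{\theta(u(Q))^{1/p}}\Bigl[u\bigl(\{x\in Q:|[b,I_\alpha](f_i)(x)|>\sigma/2\}\bigr)\Bigr]^{1/p},\qquad i=1,2,
\end{equation*}
by $C\|f\|_{\mathcal M^{p,\theta}(v,u)}$. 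For $i=1$ this follows by applying Theorem \ref{Two2} to $f_1$ and then invoking the Morrey-norm bound on $2Q$; because $u\in A_\infty\subset\Delta_2$ and $\theta$ satisfies \eqref{D condition}, the ratio $\theta(u(2Q))/\theta(u(Q))\leq C(u(2Q)/u(Q))^\kappa$ is bounded, exactly as in the handling of $I_1$ and $J'_1$ earlier.

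For $i=2$ I would first use Chebyshev's inequality to reduce to an $L^p(u)$-norm estimate on $[b,I_\alpha](f_2)$, and then apply the standard commutator identity
\begin{equation*}
|[b,I_\alpha](f_2)(x)|\leq|b(x)-b_Q|\cdot|I_\alpha(f_2)(x)|+\bigl|I_\alpha\bigl((b_Q-b)f_2\bigr)(x)\bigr|
\end{equation*}
together with the annular pointwise bounds \eqref{pointwise1} and \eqref{pointwise2}. Integration of $|b(x)-b_Q|^p$ against $u$ on $Q$ yields the factor $C\|b\|_*^p\,u(Q)$ via Lemma \ref{BMO}(ii), and this is precisely where the hypothesis $u\in A_\infty$ (stronger than the $u\in\Delta_2$ of Theorem \ref{mainthm:5}) is genuinely needed. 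Inside the $y$-integrals I split $b(y)-b_Q=(b(y)-b_{2^{j+1}Q})+(b_{2^{j+1}Q}-b_Q)$ and extract $C(j+1)\|b\|_*$ from the second summand by Lemma \ref{BMO}(i).

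The main obstacle will be estimating, uniformly in $j$, the annular averages of $|f|$ and of $|b-b_{2^{j+1}Q}|\cdot|f|$ over $2^{j+1}Q$ divided by $|2^{j+1}Q|^{1-\alpha/n}$. For the former I would factor $|f|=(|f|v^{1/p})\cdot v^{-1/p}$ and apply the generalized H\"older inequality \eqref{three} with the Young functions $\mathcal C(t)=t^p$ and $\mathcal A(t)=t^{p'}(1+\log^+ t)^{p'}$ appearing in the bump condition; the $\mathcal A$-norm of $v^{-1/p}$ combined with the prefactor $|2^{j+1}Q|^{\alpha/n}$ is absorbed by \eqref{assump2.1}, while Jensen's inequality converts the $L^r$-average of $u$ into $u(2^{j+1}Q)/|2^{j+1}Q|$. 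The outcome is a bound by $C\|f\|_{\mathcal M^{p,\theta}(v,u)}\cdot\theta(u(2^{j+1}Q))^{1/p}/u(2^{j+1}Q)^{1/p}$. For the latter, an iterated application of \eqref{three} in Orlicz spaces introduces the extra factor $\|b-b_{2^{j+1}Q}\|_{\exp L,2^{j+1}Q}\leq C\|b\|_*$ from John--Nirenberg's inequality, producing the same bound up to a constant multiple of $\|b\|_*$.

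Summing over $j\geq 1$ is then routine: using \eqref{D condition} together with the $A_\infty$ comparison \eqref{compare} applied to $u$ with some exponent $\delta>0$, each resulting series is dominated by $\sum_j (j+1)\cdot 2^{-(j+1)n\delta(1-\kappa)/p}$, which converges because $\kappa<1$. This reproduces precisely the mechanism used in \eqref{theta3} and \eqref{theta4}. Taking the supremum over all cubes $Q$ and all $\sigma>0$ then yields the desired boundedness $[b,I_\alpha]\colon\mathcal M^{p,\theta}(v,u)\to W\mathcal M^{p,\theta}(u)$.
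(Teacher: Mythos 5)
Your proposal follows the paper's proof essentially step for step: the same $f = f_1 + f_2$ split, Theorem \ref{Two2} for the local part $K'_1$, Chebyshev plus the commutator decomposition into $\xi=|b-b_Q|\,|I_\alpha f_2|$ and $\eta=|I_\alpha((b_Q-b)f_2)|$, Lemma \ref{BMO}(ii) with $u\in A_\infty$ for $\int_Q|b-b_Q|^p u$, the splitting $b(y)-b_Q=(b(y)-b_{2^{j+1}Q})+(b_{2^{j+1}Q}-b_Q)$, the Orlicz three-function H\"older \eqref{three} with $\mathcal C(t)=t^{p'}$, $\mathcal A(t)\approx t^{p'}(1+\log^+t)^{p'}$, $\mathcal B(t)\approx e^t-1$ together with the bump condition \eqref{assump2.1}, and the geometric summation via the $\mathcal D_\kappa$ condition and \eqref{compare}. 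You also correctly identify the point where $u\in A_\infty$ is genuinely needed (Lemma \ref{BMO}(ii) on $Q$). Two small imprecisions worth noting: the factor $u(2^{j+1}Q)\leq|2^{j+1}Q|^{1/r'}(\int_{2^{j+1}Q}u^r)^{1/r}$ comes from H\"older's inequality with exponent $r$, not Jensen's inequality (Jensen gives the opposite inequality); and in the estimate for the plain $|f|$-average the Young function to which $v^{-1/p}$ is attached should be $t^{p'}$ (with the observation $t^{p'}\leq\mathcal A(t)$ making \eqref{assump2.1} applicable), not $t^p$. Neither affects the argument.
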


\begin{proof}[Proof of Theorem $\ref{mainthm:5}$]
Let $f\in\mathcal M^{p,\theta}(v,u)$ with $1<p<\infty$. For arbitrary $x_0\in\mathbb R^n$, set $Q=Q(x_0,\ell)$ for the cube centered at $x_0$ and of the side length $\ell$. Let
\begin{equation*}
f=f\cdot\chi_{2Q}+f\cdot\chi_{(2Q)^c}:=f_1+f_2,
\end{equation*}
where $\chi_{2Q}$ denotes the characteristic function of $2Q=Q(x_0,2\ell)$. Then for any given $\sigma>0$, we write
\begin{equation*}
\begin{split}
&\frac{1}{\theta(u(Q))^{1/p}}\sigma\cdot
\Big[u\big(\big\{x\in Q:\big|I_{\alpha}(f)(x)\big|>\sigma\big\}\big)\Big]^{1/p}\\
\leq &\frac{1}{\theta(u(Q))^{1/p}}\sigma\cdot
\Big[u\big(\big\{x\in Q:\big|I_{\alpha}(f_1)(x)\big|>\sigma/2\big\}\big)\Big]^{1/p}\\
&+\frac{1}{\theta(u(Q))^{1/p}}\sigma\cdot
\Big[u\big(\big\{x\in Q:\big|I_{\alpha}(f_2)(x)\big|>\sigma/2\big\}\big)\Big]^{1/p}\\
:=&K_1+K_2.
\end{split}
\end{equation*}
Using Theorem \ref{Two1}, the $\mathcal D_\kappa$ condition (\ref{D condition}) of $\theta$ and inequality (\ref{weights})(consider cube $Q$ instead of ball $B$), we get
\begin{equation*}
\begin{split}
K_1&\leq C\cdot\frac{1}{\theta(u(Q))^{1/p}}\left(\int_{\mathbb R^n}|f_1(x)|^p v(x)\,dx\right)^{1/p}\\
&=C\cdot\frac{1}{\theta(u(Q))^{1/p}}\left(\int_{2Q}|f(x)|^p v(x)\,dx\right)^{1/p}\\
&\leq C\big\|f\big\|_{\mathcal M^{p,\theta}(v,u)}\cdot\frac{\theta(u(2Q))^{1/p}}{\theta(u(Q))^{1/p}}\\
&\leq C\big\|f\big\|_{\mathcal M^{p,\theta}(v,u)}\cdot\frac{u(2Q)^{\kappa/p}}{u(Q)^{\kappa/p}}\\
&\leq C\big\|f\big\|_{\mathcal M^{p,\theta}(v,u)}.
\end{split}
\end{equation*}
As for the term $K_2$, using the same methods and steps as we deal with $I_2$ in Theorem \ref{mainthm:1}, we can also obtain that for any $x\in Q$,
\begin{equation}\label{alpha1}
\big|I_{\alpha}(f_2)(x)\big|\leq C\sum_{j=1}^\infty\frac{1}{|2^{j+1}Q|^{1-{\alpha}/n}}\int_{2^{j+1}Q}|f(y)|\,dy.
\end{equation}
This pointwise estimate \eqref{alpha1} together with Chebyshev's inequality implies
\begin{equation*}
\begin{split}
K_2&\leq\frac{2}{\theta(u(Q))^{1/p}}\cdot\left(\int_Q\big|I_{\alpha}(f_2)(x)\big|^pu(x)\,dx\right)^{1/p}\\
&\leq C\cdot\frac{u(Q)^{1/p}}{\theta(u(Q))^{1/p}}
\sum_{j=1}^\infty\frac{1}{|2^{j+1}Q|^{1-{\alpha}/n}}\int_{2^{j+1}Q}|f(y)|\,dy.
\end{split}
\end{equation*}
Moreover, an application of H\"older's inequality gives that
\begin{equation*}
\begin{split}
K_2&\leq C\cdot\frac{u(Q)^{1/p}}{\theta(u(Q))^{1/p}}
\sum_{j=1}^\infty\frac{1}{|2^{j+1}Q|^{1-{\alpha}/n}}\left(\int_{2^{j+1}Q}|f(y)|^pv(y)\,dy\right)^{1/p}\\
&\times\left(\int_{2^{j+1}Q}v(y)^{-p'/p}dy\right)^{1/{p'}}\\
&\leq C\big\|f\big\|_{\mathcal M^{p,\theta}(v,u)}\cdot\frac{u(Q)^{1/p}}{\theta(u(Q))^{1/p}} \sum_{j=1}^\infty\frac{\theta(u(2^{j+1}Q))^{1/p}}{|2^{j+1}Q|^{1-{\alpha}/n}}
\times\left(\int_{2^{j+1}Q}v(y)^{-p'/p}dy\right)^{1/{p'}}.
\end{split}
\end{equation*}
For any $j\in\mathbb Z^+$, since $0<u(Q)<u(2^{j+1}Q)<+\infty$ when $u$ is a weight function, then by the $\mathcal D_\kappa$ condition (\ref{D condition}) of $\theta$ with $0\leq\kappa<1$, we can see that
\begin{equation}\label{comparethe}
\frac{\theta(u(2^{j+1}Q))^{1/p}}{\theta(u(Q))^{1/p}}\leq\frac{u(2^{j+1}Q)^{\kappa/p}}{u(Q)^{\kappa/p}}.
\end{equation}
In addition, we apply H\"older's inequality with exponent $r$ to get
\begin{equation}\label{U}
u\big(2^{j+1}Q\big)=\int_{2^{j+1}Q}u(y)\,dy
\leq\big|2^{j+1}Q\big|^{1/{r'}}\left(\int_{2^{j+1}Q}u(y)^r\,dy\right)^{1/r}.
\end{equation}
Hence, in view of \eqref{comparethe} and \eqref{U} derived above, we have
\begin{equation*}
\begin{split}
K_2&\leq C\big\|f\big\|_{\mathcal M^{p,\theta}(v,u)}\sum_{j=1}^\infty
\frac{u(Q)^{{(1-\kappa)}/p}}{u(2^{j+1}Q)^{{(1-\kappa)}/p}}\cdot\frac{u(2^{j+1}Q)^{1/p}}{|2^{j+1}Q|^{1-{\alpha}/n}}
\times\left(\int_{2^{j+1}Q}v(y)^{-p'/p}dy\right)^{1/{p'}}\\
&\leq C\big\|f\big\|_{\mathcal M^{p,\theta}(v,u)}\sum_{j=1}^\infty
\frac{u(Q)^{{(1-\kappa)}/p}}{u(2^{j+1}Q)^{{(1-\kappa)}/p}}\cdot\frac{|2^{j+1}Q|^{1/{(r'p)}}}{|2^{j+1}Q|^{1-{\alpha}/n}}\\
&\times\left(\int_{2^{j+1}Q}u(y)^r\,dy\right)^{1/{(rp)}}\left(\int_{2^{j+1}Q}v(y)^{-p'/p}\,dy\right)^{1/{p'}}\\
&\leq C\big\|f\big\|_{\mathcal M^{p,\theta}(v,u)}\times\sum_{j=1}^\infty\frac{u(Q)^{{(1-\kappa)}/p}}{u(2^{j+1}Q)^{{(1-\kappa)}/p}}.
\end{split}
\end{equation*}
The last inequality is obtained by the $A_p$-type condition \eqref{assump1.1} on $(u,v)$. Furthermore, since $u\in\Delta_2$, we can easily check that there exists a reverse doubling constant $D=D(u)>1$ independent of $Q$ such that (see Lemma 4.1 in \cite{komori})
\begin{equation*}
u(2Q)\geq D\cdot u(Q), \quad \mbox{for any cube }\,Q\subset\mathbb R^n,
\end{equation*}
which implies that for any $j\in\mathbb Z^+$, $u(2^{j+1}Q)\geq D^{j+1}\cdot u(Q)$ by iteration. Hence,
\begin{align}\label{C1}
\sum_{j=1}^\infty\frac{u(Q)^{{(1-\kappa)}/p}}{u(2^{j+1}Q)^{{(1-\kappa)}/p}}
&\leq \sum_{j=1}^\infty\left(\frac{u(Q)}{D^{j+1}\cdot u(Q)}\right)^{{(1-\kappa)}/p}\notag\\
&=\sum_{j=1}^\infty\left(\frac{1}{D^{j+1}}\right)^{{(1-\kappa)}/p}\leq C,
\end{align}
where the last series is convergent since the reverse doubling constant $D>1$ and $0\leq\kappa<1$. This yields our desired estimate $K_2\leq C\big\|f\big\|_{\mathcal M^{p,\theta}(v,u)}$. Summing up the above estimates for $K_1$ and $K_2$, and then taking the supremum over all cubes $Q\subset\mathbb R^n$ and all $\sigma>0$, we finish the proof of Theorem \ref{mainthm:5}.
\end{proof}

\begin{proof}[Proof of Theorem $\ref{mainthm:6}$]
Let $f\in\mathcal M^{p,\theta}(v,u)$ with $1<p<\infty$. For an arbitrary cube $Q=Q(x_0,\ell)$ in $\mathbb R^n$, as before, we set
\begin{equation*}
f=f_1+f_2,\qquad f_1=f\cdot\chi_{2Q},\quad  f_2=f\cdot\chi_{(2Q)^c}.
\end{equation*}
Then for any given $\sigma>0$, we write
\begin{equation*}
\begin{split}
&\frac{1}{\theta(u(Q))^{1/p}}\sigma\cdot
\Big[u\big(\big\{x\in Q:\big|[b,I_{\alpha}](f)(x)\big|>\sigma\big\}\big)\Big]^{1/p}\\
\leq &\frac{1}{\theta(u(Q))^{1/p}}\sigma\cdot
\Big[u\big(\big\{x\in Q:\big|[b,I_{\alpha}](f_1)(x)\big|>\sigma/2\big\}\big)\Big]^{1/p}\\
&+\frac{1}{\theta(u(Q))^{1/p}}\sigma\cdot
\Big[u\big(\big\{x\in Q:\big|[b,I_{\alpha}](f_2)(x)\big|>\sigma/2\big\}\big)\Big]^{1/p}\\
:=&K'_1+K'_2.
\end{split}
\end{equation*}
Applying Theorem \ref{Two2}, the $\mathcal D_\kappa$ condition (\ref{D condition}) of $\theta$ and inequality (\ref{weights})(consider cube $Q$ instead of ball $B$), we get
\begin{equation*}
\begin{split}
K'_1&\leq C\cdot\frac{1}{\theta(u(Q))^{1/p}}\left(\int_{\mathbb R^n}|f_1(x)|^p v(x)\,dx\right)^{1/p}\\
&=C\cdot\frac{1}{\theta(u(Q))^{1/p}}\left(\int_{2Q}|f(x)|^p v(x)\,dx\right)^{1/p}\\
&\leq C\big\|f\big\|_{\mathcal M^{p,\theta}(v,u)}\cdot\frac{\theta(u(2Q))^{1/p}}{\theta(u(Q))^{1/p}}\\
&\leq C\big\|f\big\|_{\mathcal M^{p,\theta}(v,u)}\cdot\frac{u(2Q)^{\kappa/p}}{u(Q)^{\kappa/p}}\\
&\leq C\big\|f\big\|_{\mathcal M^{p,\theta}(v,u)}.
\end{split}
\end{equation*}
Next we estimate $K'_2$. For any $x\in Q$, from the definition of $[b,I_{\alpha}]$, we can see that
\begin{equation*}
\begin{split}
\big|[b,I_{\alpha}](f_2)(x)\big|
&\leq\big|b(x)-b_{Q}\big|\cdot\big|I_\alpha(f_2)(x)\big|
+\Big|I_\alpha\big([b_{Q}-b]f_2\big)(x)\Big|\\
&:=\xi(x)+\eta(x).
\end{split}
\end{equation*}
Consequently, we can further divide $K'_2$ into two parts.
\begin{equation*}
\begin{split}
K'_2\leq&\frac{1}{\theta(u(Q))^{1/p}}\sigma\cdot\Big[u\big(\big\{x\in Q:\xi(x)>\sigma/4\big\}\big)\Big]^{1/p}\\
&+\frac{1}{\theta(u(Q))^{1/p}}\sigma\cdot\Big[u\big(\big\{x\in Q:\eta(x)>\sigma/4\big\}\big)\Big]^{1/p}\\
:=&K'_3+K'_4.
\end{split}
\end{equation*}
For the term $K'_3$, it follows from the pointwise estimate \eqref{alpha1} mentioned above and Chebyshev's inequality that
\begin{equation*}
\begin{split}
K'_3&\leq\frac{4}{\theta(u(Q))^{1/p}}\cdot\left(\int_Q\big|\xi(x)\big|^pu(x)\,dx\right)^{1/p}\\
&\leq\frac{C}{\theta(u(Q))^{1/p}}\cdot\left(\int_Q\big|b(x)-b_{Q}\big|^pu(x)\,dx\right)^{1/p}
\times\bigg(\sum_{j=1}^\infty\frac{1}{|2^{j+1}Q|^{1-{\alpha}/n}}\int_{2^{j+1}Q}|f(y)|\,dy\bigg)\\
&\leq C\cdot\frac{u(Q)^{1/p}}{\theta(u(Q))^{1/p}}
\sum_{j=1}^\infty\frac{1}{|2^{j+1}Q|^{1-{\alpha}/n}}\int_{2^{j+1}Q}|f(y)|\,dy,
\end{split}
\end{equation*}
where in the last inequality we have used the fact that Lemma \ref{BMO}$(ii)$ still holds when $B$ replaced by $Q$ and $u$ is an $A_{\infty}$ weight. Repeating the arguments in the proof of Theorem \ref{mainthm:5}, we can show that $K'_3\leq C\big\|f\big\|_{\mathcal M^{p,\theta}(v,u)}$. As for the term $K'_4$, we can show the following pointwise estimate in the same manner as in the proof of Theorem \ref{mainthm:3}.
\begin{equation*}
\eta(x)=\Big|I_{\alpha}\big([b_{Q}-b]f_2\big)(x)\Big|\leq
C\sum_{j=1}^\infty\frac{1}{|2^{j+1}Q|^{1-{\alpha}/n}}\int_{2^{j+1}Q}\big|b(y)-b_{Q}\big|\cdot\big|f(y)\big|\,dy.
\end{equation*}
This, together with Chebyshev's inequality yields
\begin{equation*}
\begin{split}
K'_4&\leq\frac{4}{\theta(u(Q))^{1/p}}\cdot\left(\int_Q\big|\eta(x)\big|^pu(x)\,dx\right)^{1/p}\\
&\leq C\cdot\frac{u(Q)^{1/p}}{\theta(u(Q))^{1/p}}\cdot
\sum_{j=1}^\infty\frac{1}{|2^{j+1}Q|^{1-{\alpha}/n}}\int_{2^{j+1}Q}\big|b(y)-b_{Q}\big|\cdot\big|f(y)\big|\,dy\\
&\leq C\cdot\frac{u(Q)^{1/p}}{\theta(u(Q))^{1/p}}\cdot
\sum_{j=1}^\infty\frac{1}{|2^{j+1}Q|^{1-{\alpha}/n}}\int_{2^{j+1}Q}\big|b(y)-b_{{2^{j+1}Q}}\big|\cdot\big|f(y)\big|\,dy\\
&+C\cdot\frac{u(Q)^{1/p}}{\theta(u(Q))^{1/p}}\cdot
\sum_{j=1}^\infty\frac{1}{|2^{j+1}Q|^{1-{\alpha}/n}}\int_{2^{j+1}Q}\big|b_{{2^{j+1}Q}}-b_{Q}\big|\cdot\big|f(y)\big|\,dy\\
&:=K'_5+K'_6.
\end{split}
\end{equation*}
An application of H\"older's inequality leads to that
\begin{equation*}
\begin{split}
K'_5&\leq C\cdot\frac{u(Q)^{1/p}}{\theta(u(Q))^{1/p}}\cdot
\sum_{j=1}^\infty\frac{1}{|2^{j+1}Q|^{1-{\alpha}/n}}\left(\int_{2^{j+1}Q}\big|f(y)\big|^pv(y)\,dy\right)^{1/p}\\
&\times\left(\int_{2^{j+1}Q}\big|b(y)-b_{{2^{j+1}Q}}\big|^{p'}v(y)^{-p'/p}\,dy\right)^{1/{p'}}\\
&\leq C\big\|f\big\|_{\mathcal M^{p,\theta}(v,u)}\cdot\frac{u(Q)^{1/p}}{\theta(u(Q))^{1/p}}\cdot
\sum_{j=1}^\infty\frac{\theta(u(2^{j+1}Q))^{1/p}}{|2^{j+1}Q|^{1-{\alpha}/n}}\\
&\times\big|2^{j+1}Q\big|^{1/{p'}}\Big\|(b-b_{{2^{j+1}Q}})\cdot v^{-1/p}\Big\|_{\mathcal C,2^{j+1}Q},
\end{split}
\end{equation*}
where $\mathcal C(t)=t^{p'}$ is a Young function. For $1<p<\infty$, we know the inverse function of $\mathcal C(t)$ is $\mathcal C^{-1}(t)=t^{1/{p'}}$. Observe that
\begin{equation*}
\begin{split}
\mathcal C^{-1}(t)&=t^{1/{p'}}\\
&=\frac{t^{1/{p'}}}{1+\log^+ t}\times\big(1+\log^+t\big)\\
&=\mathcal A^{-1}(t)\cdot\mathcal B^{-1}(t),
\end{split}
\end{equation*}
where
\begin{equation*}
\mathcal A(t)\approx t^{p'}(1+\log^+t)^{p'},\qquad \mbox{and}\qquad \mathcal B(t)\approx e^t-1.
\end{equation*}
Thus, by inequality \eqref{three} and the unweighted version of inequality \eqref{Jensen}(when $w\equiv1$), we have
\begin{equation*}
\begin{split}
\Big\|(b-b_{{2^{j+1}Q}})\cdot v^{-1/p}\Big\|_{\mathcal C,2^{j+1}Q}
&\leq C\Big\|b-b_{{2^{j+1}Q}}\Big\|_{\mathcal B,2^{j+1}Q}\cdot\big\|v^{-1/p}\big\|_{\mathcal A,2^{j+1}Q}\\
&\leq C\|b\|_*\cdot\big\|v^{-1/p}\big\|_{\mathcal A,2^{j+1}Q}.
\end{split}
\end{equation*}
Since $u$ is an $A_{\infty}$ weight, one has $u\in\Delta_2$. Moreover, in view of \eqref{comparethe} and \eqref{U}, we can deduce that
\begin{equation*}
\begin{split}
K'_5&\leq C\|b\|_*\big\|f\big\|_{\mathcal M^{p,\theta}(v,u)}
\sum_{j=1}^\infty\frac{u(2^{j+1}Q)^{\kappa/p}}{u(Q)^{\kappa/p}}\cdot\frac{u(Q)^{1/p}}{|2^{j+1}Q|^{1/p-{\alpha}/n}}
\cdot\big\|v^{-1/p}\big\|_{\mathcal A,2^{j+1}Q}\\
&\leq C\|b\|_*\big\|f\big\|_{\mathcal M^{p,\theta}(v,u)}
\sum_{j=1}^\infty\frac{u(Q)^{{(1-\kappa)}/p}}{u(2^{j+1}Q)^{{(1-\kappa)}/p}}\\
&\times\big|2^{j+1}Q\big|^{{\alpha}/n}\left(\frac{1}{|2^{j+1}Q|}\int_{2^{j+1}Q}u(x)^r\,dx\right)^{1/{(rp)}}
\cdot\big\|v^{-1/p}\big\|_{\mathcal A,2^{j+1}Q}\\
&\leq C\big\|f\big\|_{\mathcal M^{p,\theta}(v,u)}
\sum_{j=1}^\infty\frac{u(Q)^{{(1-\kappa)}/p}}{u(2^{j+1}Q)^{{(1-\kappa)}/p}}\\
&\leq C\big\|f\big\|_{\mathcal M^{p,\theta}(v,u)}.
\end{split}
\end{equation*}
The last inequality is obtained by the $A_p$-type condition \eqref{assump2.1} on $(u,v)$ and the estimate \eqref{C1}. It remains to estimate the last term $K'_6$. Applying Lemma \ref{BMO}$(i)$(use $Q$ instead of $B$) and H\"older's inequality, we get
\begin{equation*}
\begin{split}
K'_6&\leq C\cdot\frac{u(Q)^{1/p}}{\theta(u(Q))^{1/p}}
\sum_{j=1}^\infty\frac{(j+1)\|b\|_*}{|2^{j+1}Q|^{1-{\alpha}/n}}\int_{2^{j+1}Q}|f(y)|\,dy\\
&\leq C\cdot\frac{u(Q)^{1/p}}{\theta(u(Q))^{1/p}}
\sum_{j=1}^\infty\frac{(j+1)\|b\|_*}{|2^{j+1}Q|^{1-{\alpha}/n}}\left(\int_{2^{j+1}Q}\big|f(y)\big|^pv(y)\,dy\right)^{1/p}\\
&\times\left(\int_{2^{j+1}Q}v(y)^{-p'/p}dy\right)^{1/{p'}}\\
&\leq C\big\|f\big\|_{\mathcal M^{p,\theta}(v,u)}\cdot\frac{u(Q)^{1/p}}{\theta(u(Q))^{1/p}} \sum_{j=1}^\infty(j+1)\cdot\frac{\theta(u(2^{j+1}Q))^{1/p}}{|2^{j+1}Q|^{1-{\alpha}/n}}\\
&\times\left(\int_{2^{j+1}Q}v(y)^{-p'/p}dy\right)^{1/{p'}}.
\end{split}
\end{equation*}
Let $\mathcal C(t)$, $\mathcal A(t)$ be the same as before. Obviously, $\mathcal C(t)\leq\mathcal A(t)$ for all $t>0$, then it is not difficult to see that for any given cube $Q\subset\mathbb R^n$, we have $\big\|f\big\|_{\mathcal C,Q}\leq\big\|f\big\|_{\mathcal A,Q}$ by definition, which implies that condition \eqref{assump2.1} is stronger that condition \eqref{assump1.1}. This fact together with \eqref{comparethe} and \eqref{U} yield
\begin{equation*}
\begin{split}
K'_6&\leq C\big\|f\big\|_{\mathcal M^{p,\theta}(v,u)}\sum_{j=1}^\infty(j+1)\cdot
\frac{u(Q)^{{(1-\kappa)}/p}}{u(2^{j+1}Q)^{{(1-\kappa)}/p}}\cdot\frac{u(2^{j+1}Q)^{1/p}}{|2^{j+1}Q|^{1-{\alpha}/n}}\\
&\times\left(\int_{2^{j+1}Q}v(y)^{-p'/p}\,dy\right)^{1/{p'}}\\
&\leq C\big\|f\big\|_{\mathcal M^{p,\theta}(v,u)}\sum_{j=1}^\infty(j+1)\cdot
\frac{u(Q)^{{(1-\kappa)}/p}}{u(2^{j+1}Q)^{{(1-\kappa)}/p}}\cdot\frac{|2^{j+1}Q|^{1/{(r'p)}}}{|2^{j+1}Q|^{1-{\alpha}/n}}\\
&\times\left(\int_{2^{j+1}Q}u(y)^r\,dy\right)^{1/{(rp)}}\left(\int_{2^{j+1}Q}v(y)^{-p'/p}\,dy\right)^{1/{p'}}\\
&\leq C\big\|f\big\|_{\mathcal M^{p,\theta}(v,u)}
\sum_{j=1}^\infty(j+1)\cdot\frac{u(Q)^{{(1-\kappa)}/p}}{u(2^{j+1}Q)^{{(1-\kappa)}/p}}.
\end{split}
\end{equation*}
Moreover, by our additional hypothesis on $u:u\in A_\infty$ and inequality (\ref{compare}) with exponent $\delta>0$(use $Q$ instead of $B$), we finally obtain
\begin{equation*}
\begin{split}
\sum_{j=1}^\infty(j+1)\cdot\frac{u(Q)^{{(1-\kappa)}/p}}{u(2^{j+1}Q)^{{(1-\kappa)}/p}}
&\leq C\sum_{j=1}^\infty(j+1)\cdot\left(\frac{|Q|}{|2^{j+1}Q|}\right)^{{\delta(1-\kappa)}/p}\\
&\leq C\sum_{j=1}^\infty(j+1)\cdot\left(\frac{1}{2^{(j+1)n}}\right)^{{\delta(1-\kappa)}/p}\\
&\leq C,
\end{split}
\end{equation*}
which in turn gives that $K'_6\leq C\big\|f\big\|_{\mathcal M^{p,\theta}(v,u)}$. Summing up all the above estimates, and then taking the supremum over all cubes $Q\subset\mathbb R^n$ and all $\sigma>0$, we therefore conclude the proof of Theorem \ref{mainthm:6}.
\end{proof}
In particular, if we take $\theta(x)=x^\kappa$ with $0<\kappa<1$, then we immediately
get the following two-weight, weak type $(p,p)$ inequalities for $I_{\alpha}$ and $[b,I_{\alpha}]$ in the weighted Morrey spaces.

\begin{corollary}
Let $1<p<\infty$, $0<\kappa<1$ and $0<\alpha<n$. Given a pair of weights $(u,v)$, suppose that for some $r>1$ and for all cubes $Q$, \eqref{assump1.1} holds. If $u\in\Delta_2$, then the fractional integral operator $I_{\alpha}$ is bounded from $\mathcal L^{p,\kappa}(v,u)$ into $W\mathcal L^{p,\kappa}(u)$.
\end{corollary}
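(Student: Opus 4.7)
The plan is to obtain this corollary as a direct specialization of Theorem \ref{mainthm:5}, and the work is essentially just bookkeeping. First, I would verify that the choice $\theta(x)=x^{\kappa}$ with $0<\kappa<1$ satisfies the $\mathcal D_{\kappa}$ condition \eqref{D condition}. This is immediate because $\theta(\xi)/\xi^{\kappa}\equiv 1$ for all $\xi>0$, so the inequality $\theta(\xi)/\xi^{\kappa}\leq C\cdot\theta(\xi')/(\xi')^{\kappa}$ trivially holds with constant $C=1$, for any $0<\xi'<\xi<+\infty$.

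Next, I would recall from the discussion following Definition 2.6 that this specific $\theta$ identifies the generalized weighted Morrey spaces introduced here with the weighted Morrey spaces of Komori--Shirai. Specifically, by unwinding definitions one has
\begin{equation*}
\mathcal M^{p,\theta}(v,u)\big|_{\theta(x)=x^{\kappa}}=\mathcal L^{p,\kappa}(v,u),
\qquad
W\mathcal M^{p,\theta}(u)\big|_{\theta(x)=x^{\kappa}}=W\mathcal L^{p,\kappa}(u),
\end{equation*}
with the corresponding norms coinciding. Here I am using the observation that $\theta(u(B))=u(B)^{\kappa}$ and $\theta(u(B))^{1/p}=u(B)^{\kappa/p}$, matching the normalizations in \eqref{WMorrey} and \eqref{WWMorrey}.

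With these two observations, the corollary follows by applying Theorem \ref{mainthm:5} to the pair $(u,v)$ together with the function $\theta(x)=x^{\kappa}$. The hypotheses of Theorem \ref{mainthm:5} are exactly the hypotheses of the corollary: the two-weight condition \eqref{assump1.1} is assumed, $u\in\Delta_2$ is assumed, and the $\mathcal D_{\kappa}$ condition is guaranteed by the first step. The resulting conclusion translates, via the second step, to the statement that $I_{\alpha}$ is bounded from $\mathcal L^{p,\kappa}(v,u)$ into $W\mathcal L^{p,\kappa}(u)$.

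Since the proof is a pure specialization, there is no real obstacle; the only minor subtlety is making sure the constraint $0\leq\kappa<1$ in Theorem \ref{mainthm:5} accommodates the range $0<\kappa<1$ in the corollary, which it does. No further estimates are required.
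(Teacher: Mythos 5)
Your proposal is correct and matches the paper's approach exactly: the paper itself obtains this corollary by taking $\theta(x)=x^{\kappa}$ in Theorem \ref{mainthm:5}, noting that this choice satisfies the $\mathcal D_\kappa$ condition and reduces $\mathcal M^{p,\theta}(v,u)$ and $W\mathcal M^{p,\theta}(u)$ to $\mathcal L^{p,\kappa}(v,u)$ and $W\mathcal L^{p,\kappa}(u)$. No further comparison is needed.
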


\begin{corollary}
Let $1<p<\infty$, $0<\kappa<1$, $b\in BMO(\mathbb R^n)$ and $0<\alpha<n$. Given a pair of weights $(u,v)$, suppose that for some $r>1$ and for all cubes $Q$, \eqref{assump2.1} holds. If $u\in A_\infty$, then the linear commutator $[b,I_{\alpha}]$ is bounded from $\mathcal L^{p,\kappa}(v,u)$ into $W\mathcal L^{p,\kappa}(u)$.
\end{corollary}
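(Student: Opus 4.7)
The plan is to derive this corollary as an immediate specialization of Theorem \ref{mainthm:6}. Since Theorem \ref{mainthm:6} already establishes that $[b,I_\alpha]$ maps $\mathcal M^{p,\theta}(v,u)$ into $W\mathcal M^{p,\theta}(u)$ under hypothesis \eqref{assump2.1}, the $\mathcal D_\kappa$ condition on $\theta$, and $u\in A_\infty$, the only work is to verify that the particular choice $\theta(x)=x^\kappa$ satisfies the structural requirements of that theorem and that the associated Morrey type spaces collapse onto the weighted Morrey spaces $\mathcal L^{p,\kappa}(v,u)$ and $W\mathcal L^{p,\kappa}(u)$.

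First I would check that $\theta(x)=x^\kappa$ satisfies the $\mathcal D_\kappa$ condition \eqref{D condition}: for this choice the quotient $\theta(\xi)/\xi^\kappa\equiv 1$, so the monotonicity inequality is trivially valid with constant $C=1$. Second, I would read off from the definitions in Section~2.2 that
\begin{equation*}
\mathcal L^{p,\kappa}(v,u)=\mathcal M^{p,\theta}(v,u)\big|_{\theta(x)=x^{\kappa}},
\qquad
W\mathcal L^{p,\kappa}(u)=W\mathcal M^{p,\theta}(u)\big|_{\theta(x)=x^{\kappa}},
\end{equation*}
exactly as the author records after Definition~2.6. Thus both sides of the boundedness statement can be reinterpreted as Morrey type spaces associated to the admissible $\theta(x)=x^\kappa$.

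With these identifications in place I would simply invoke Theorem \ref{mainthm:6}: the hypotheses of that theorem, namely \eqref{assump2.1} on the pair $(u,v)$, $u\in A_\infty$, $b\in BMO(\mathbb R^n)$, $1<p<\infty$, $0<\alpha<n$, and the $\mathcal D_\kappa$ condition on $\theta$ with $0\leq\kappa<1$, are all exactly the hypotheses of the corollary (with $0<\kappa<1$ strictly inside the allowable range). The conclusion of Theorem \ref{mainthm:6} then reads, after the substitution $\theta(x)=x^\kappa$, that $[b,I_\alpha]$ is bounded from $\mathcal L^{p,\kappa}(v,u)$ into $W\mathcal L^{p,\kappa}(u)$, which is precisely the desired statement.

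Since the corollary is a direct restatement of Theorem \ref{mainthm:6} in a specific case, there is no genuine obstacle: the only point worth flagging is to ensure that the admissible exponent range $0<\kappa<1$ used in the corollary is strictly contained in the range $0\leq\kappa<1$ permitted in the $\mathcal D_\kappa$ hypothesis of Theorem \ref{mainthm:6}, which it is. No additional $\Delta_2$ assumption on $u$ is required here (unlike the previous corollary for $I_\alpha$ alone), because $u\in A_\infty$ already implies $u\in\Delta_2$.
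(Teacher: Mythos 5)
Your proposal is correct and matches the paper's own treatment exactly: the paper derives this corollary from Theorem \ref{mainthm:6} by the specialization $\theta(x)=x^{\kappa}$ with $0<\kappa<1$, noting that $\mathcal L^{p,\kappa}(v,u)=\mathcal M^{p,\theta}(v,u)\big|_{\theta(x)=x^{\kappa}}$ and $W\mathcal L^{p,\kappa}(u)=W\mathcal M^{p,\theta}(u)\big|_{\theta(x)=x^{\kappa}}$. Your observation that $\theta(x)=x^\kappa$ trivially satisfies the $\mathcal D_\kappa$ condition, and your remark that no separate $\Delta_2$ hypothesis is needed since $u\in A_\infty\Rightarrow u\in\Delta_2$, are both accurate.
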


\end{document}